\numberwithin{equation}{section}
\newtheorem{theorem}{Theorem}[section]
\newtheorem{lemma}[theorem]{Lemma}
\newtheorem{definition}[theorem]{Definition}
\newcommand\EGL{E_\mathrm{GL}}
\newcommand\C{\mathbb{C}}
\newcommand\loc{{\mathrm{loc}}}
\newcommand\bext{b_{\mathrm{ext}}}
\newcommand\Bext{B_{\mathrm{ext}}}
\newcommand{\gsim}{\gtrsim}
\newcommand{\lsim}{\lesssim}
\renewcommand\tilde{\widetilde}
\newcommand\calH{\mathcal{H}}
\def\Int{\mathrm{int}}
\def\nab{\nabla}
\newcommand\e{\varepsilon}
\newcommand\calD{\mathcal{D}}
\def\R{\mathbb{R}}
\def\Z{\mathbb{Z}}
\def\N{\mathbb{N}}
\def\C{\mathbb{C}}
\def\Div{\mathop{\rm div}}
\def\dist{{\rm dist}}
\newcommand{\uno}{\mathds{1}}
\begin{document}
  \title{Branched microstructures in the  Ginzburg-Landau  model of
    type-I superconductors}
    \author
{Sergio Conti, Felix Otto, and Sylvia Serfaty}
\maketitle
\begin{abstract} We consider the Ginzburg-Landau energy for a type-I superconductor in the shape of an infinite three-dimensional slab, with two-dimensional periodicity, with an applied magnetic field which is uniform and perpendicular to the slab. We determine
the optimal scaling law of the minimal energy in terms of the parameters of the problem,  when  the applied magnetic field is sufficiently small   and the sample sufficiently thick. 
This optimal scaling law is proven via ansatz-free lower bounds and an explicit branching construction
which refines further and further as one approaches the surface of the sample.
Two different regimes appear, with different scaling exponents.
In the first regime, the branching leads to an almost uniform magnetic field pattern on the boundary;
in the second one  the inhomogeneity  survives up to the boundary.
\end{abstract}
\section{Introduction}
Superconductivity, discovered in  1911 by Kamerlingh Onnes, is a phenomenon happening at low temperature in certain materials  which loose their resistivity and expel an applied magnetic field. The latter  is called the Meissner effect. More precisely, when the applied magnetic field is small, 
the sample is everywhere superconducting and completely expels the magnetic field, while when the magnetic field becomes larger, it partially penetrates the sample via regions of normal phase where the material is not superconducting. If the magnetic field is further increased, then superconductivity is completely destroyed and the sample behaves like a normal conductor.

The standard model for describing superconductivity is the Ginzburg-Landau functional, which  was introduced in the 1950's by Landau and Ginzburg on  a phenomenological basis. 
It was later justified based on microscopic quantum mechanical principles via the Bardeen-Cooper-Schrieffer (BCS) theory, which explains superconductivity through the appearance of ``Cooper pairs" of superconducting electrons. The Ginzburg-Landau model is a formal limit of the BCS model, and this derivation was accomplished rigorously in the recent work \cite{FrankSeiringer2012}.

The Ginzburg-Landau model  describes the state of the sample via a  complex-valued order parameter $u$.
The squared modulus of $u$
represents the local density of the Cooper pairs of  ``superconducting electrons".
In other words, $\rho:=|u|^2$ indicates whether one is in the normal phase $\rho\simeq 0$, or in the superconducting phase $\rho\simeq 1$.
The transition between $0$ and $1$  happens within relatively thin interfacial layers (or walls).
The order parameter $u$ is coupled with the  magnetic vector potential $A$, which yields  the magnetic field  $B:=\nab \times A $ induced in the sample. 
The Meissner effect can be roughly understood as the fact that the magnetic field $B$ can only exist in the normal phase $\rho=0$, or in other words 
\begin{equation}\label{apprxmeissner} \rho B\approx 0.\end{equation}
Another important property of superconductors 
is flux quantization. If we consider a closed circuit well inside the superconducting region 
(on a scale set by the penetration length $\lambda$), the contour integral of $A$ has to be an integer multiple of $2\pi$, in units of $\hbar/2e$.
This arises because $\nabla u$ is very close to $iAu$ and 
 $|u|$ is very close to 1.
Correspondingly the flux of the magnetic field $B$ through any surface with such boundary is quantized,
in the sense that it has to be an integer multiple of $2\pi$ (again, and for the rest of this paper, in units of $\hbar/2e$).

The Ginzburg-Landau functional in a three-dimensional region $Q_{L,T}:= (0,L)^2 \times (0,T)$  can be written, after 
appropriate non-dimensionalization,  as 
\begin{equation}\label{eqdefmodelintro}
  \EGL[u,A]:=\int_{Q_{L,T}} \left[ |\nabla_A u |^2
+ \frac{\kappa^2}{2} (1-|u|^2)^2\right] dx
+ \int_{Q_L\times \R} |\nabla\times A-\Bext|^2 dx.
\end{equation}
Here $\nab_A := \nab -i A $ denotes the covariant gradient,  $\Bext:=\bext e_3$ is the applied magnetic field, which is assumed to be uniform and vertical. The constant $\kappa>0$, usually called the Ginzburg-Landau parameter, is the ratio of the ``penetration length" (of the magnetic field in the sample) $\lambda$ and the ``coherence length" $\xi$. 
For a general presentation of superconductivity and the Ginzburg-Landau model, we refer to the standard physics textbooks, such as \cite{Tinkham1996,DeGennes,SST}.
 For further mathematical reference on the Ginzburg-Landau functional, one can see for example \cite{SandierSerfaty2007}.

Superconductors are usually classified in type-I and type-II superconductors, according to whether $\kappa<1/\sqrt{2}$ or $\kappa>1/\sqrt{2}$. In type-II superconductors, there is an intermediate regime,  for low applied magnetic fields, where the  penetration of the magnetic field happens along very thin vortex filaments, carrying an integer flux, and around which the sample is normal --- this is called the mixed phase. 
The size of the vortices is only limited by the flux quantization condition, and indeed in most situations each of them carries 
exactly one quantum of flux.
This is the regime studied in details in  \cite{SandierSerfaty2007} in dimension 2 and  in \cite{BJOS} in dimension 3.
  By contrast, in type-I superconductors the ratio of characteristic lengthscales $\kappa$  does not allow  these vortex-filaments to form and larger regions of normal phase appear, separated  interfaces (called walls) from the superconducting phase. 
  Each normal region in this case carries a magnetic flux much larger than the flux quantum.
  We will be interested only in the latter situation, and we will  assume that $\kappa$ is small enough, and also that 
  the applied field $\bext$ is much smaller than the critical field, which in the present units is $\kappa\sqrt2$. 

  The pattern of the normal phase arises from the competition of different effects. On the one hand,
  the interfacial energy favours a coarse structure in the interior of the sample. On the other hand,
  the magnetic energy outside the sample favours a fine-scale mixture close to the interface. Therefore,
  the optimal pattern is expected to branch, as predicted by Landau back in 1938 
   \cite{Landau38,Landau43}. This permits to combine a coarse pattern in the interior with an
   induced magnetic field $\nab \times A$ almost aligned with $\Bext $ at the surface, see Figure \ref{figbranching} for a sketch.
Experimentally, this is manifested   by complex patterns 
observed at the surface of the sample \cite{Prozorov2007,ProzorovGiannetta2005,ProzorovHobergCanfield2008,ProzorovHoberg2009}. 
This phenomenon of domain branching   occurs also in other areas of materials science, as for example ferromagnetic materials, where the magnetization pattern is constrained to oscillate between two opposite vectors
 \cite{Lifshitz44,Hubert67}, and in shape-memory alloys, 
where the strain can oscillate between finitely many values, corresponding to the different martensitic variants.
The average behavior of these branched patterns can be characterized via scaling laws:
one determines  how  the minimal energy
per cross-sectional area scales with the various parameters of the system, and shows that the optimal scaling of the 
energy can be achieved with branching-type patterns. This is usually rigorously established by showing  ansatz-free lower bounds and complementing them with the construction of  explicit branching patterns whose energy is estimated to have the same order in the parameters as the lower bound.
This was achieved  in martensites in 
 \cite{KohnMuller92,KohnMuller94,Conti00,CapellaOtto2009,CapellaOtto2012,Zwicknagl2014,ChanConti2015}
and in magnetic materials in \cite{CK98,CKO99,ViehmannOtto2010,Viehmanndiss}.

In the case of type-I superconductors, a similar program was carried out in 
\cite{ChoksiKohnOtto2004,ChoksiContiKohnOtto2008} for a simplified model: it is 
 a ``sharp-interface" version of the Ginzburg-Landau functional, where the order parameter $u$ is only represented via its modulus $\rho$, which in turn is only allowed to take  values in  $\{0,1\}$; at the same time the kinetic energy is replaced by a constant times the $BV$ norm of $\rho$, i.e., the perimeter of the set where $\rho=1$, see Section \ref{secsharpinterface} below for details. 
 This resulted in a full characterization of the phase diagram at the level of energy
scaling, and in particular led to the discovery of a new phase for very
small applied fields, see Figure \ref{figbranching} for a sketch.

\begin{figure}
 \begin{center}
  \includegraphics[height=5cm]{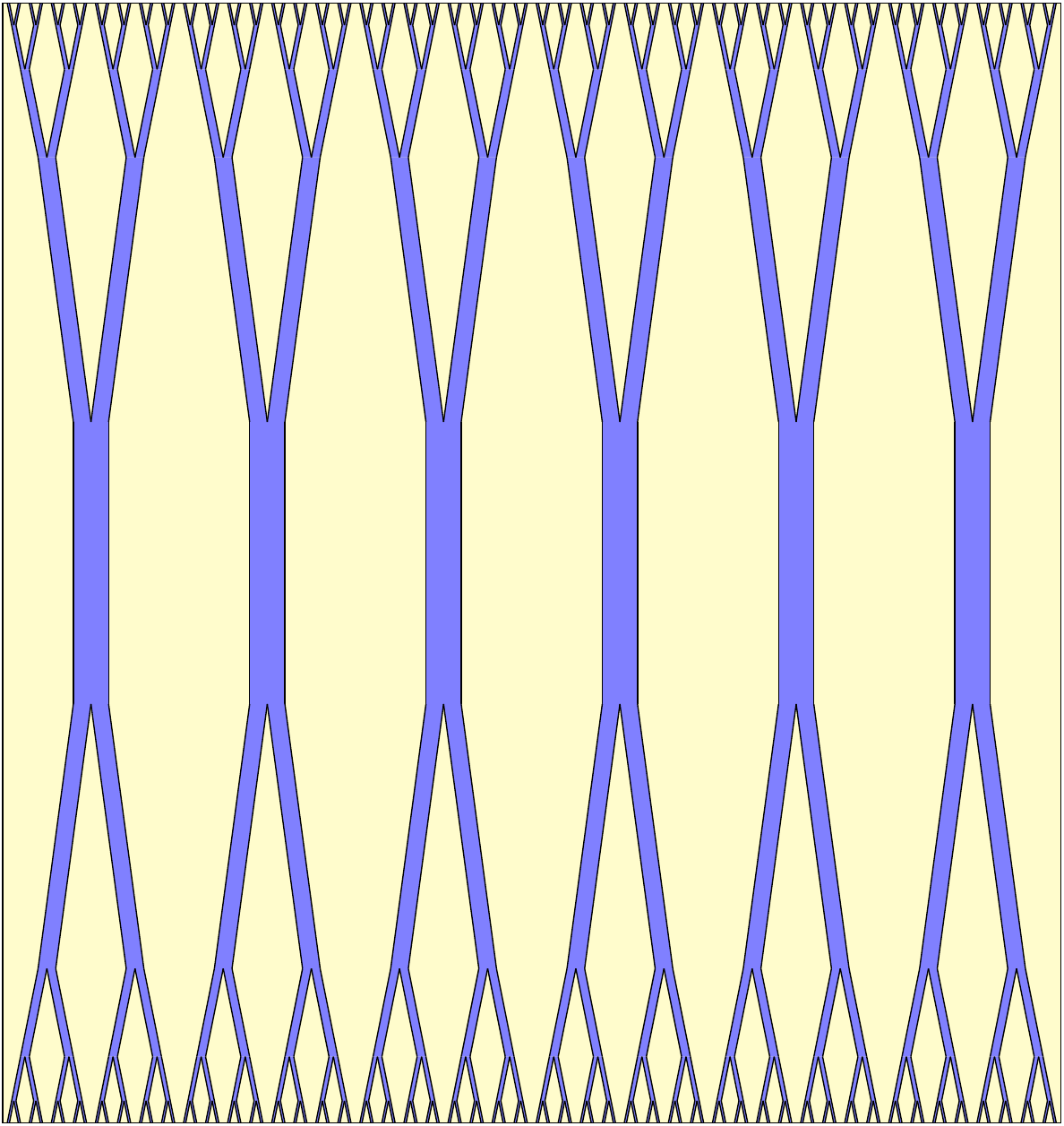}\hskip2cm
\includegraphics[height=5cm]{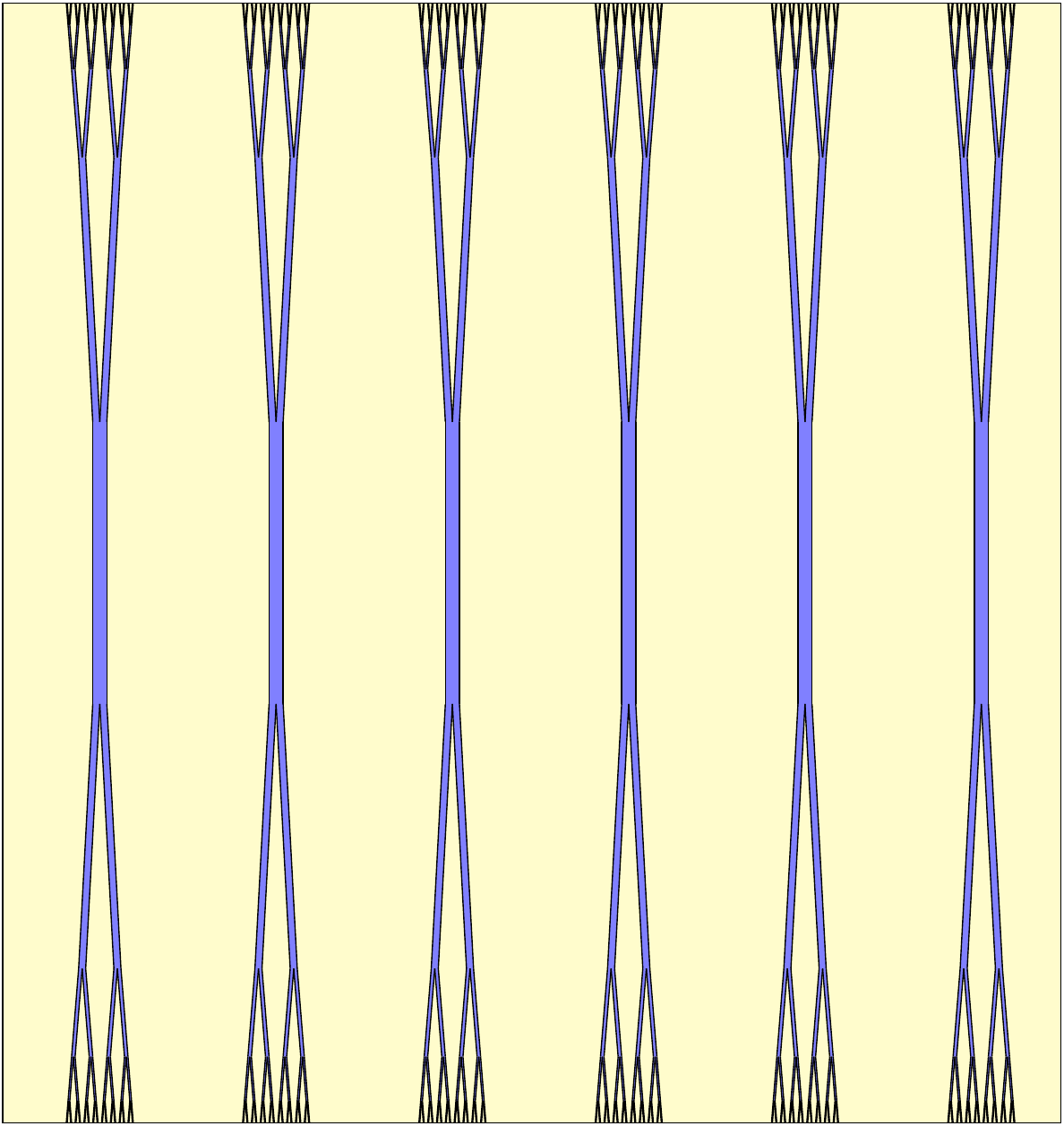}
\end{center}
\caption{Sketch of the flux patterns. Left: the regime with uniform flux on the surface, 
with energy proportional to $\bext^{2/3} \kappa^{2/3}  T^{1/3}L^2$, 
which is the optimal scaling if $\bext\ge \kappa^{5/7}/T^{2/7}$.
Right: the regime with flux concentration on the surface,
with energy proportional to 
$\bext \kappa^{3/7} T^{3/7} L^2$, which is the optimal scaling if 
$\bext\le \kappa^{5/7}/T^{2/7}$.}\label{figbranching}
\end{figure}

We study here the full Ginzburg-Landau model, as given in (\ref{eqdefmodelintro}), and determine
the scaling of the minimum energy per cross-sectional area in dependence of the problem parameters $\kappa$, $\bext$ and $T$.
We prove that the energy scaling is characterized by the same two regimes which had been found for the sharp-interface functional.
In fact, some of the ideas  of proof of \cite{ChoksiKohnOtto2004,ChoksiContiKohnOtto2008} carry over to the full model, once an appropriate splitting of the energy, involving a ``Bogomoln'yi operator" has been performed (cf. Section \ref{secprelim}).
The treatment of the lower bound contains several additional difficulties, mainly due to the fact that the Meissner condition
(\ref{apprxmeissner}) is only true ``on average'', in the sense that an appropriate weak norm is small. 
The constructions in the upper bound, at the same time, need to take into account the quantization condition, locally in each
tube, and to construct an order parameter with diffuse interfaces. 
The relationship with the simplified model is discussed in more detail  in Section \ref{secsharpinterface}.

We  work in an infinite periodic slab geometry, which is enough to understand the main surface branching features that we are interested in, hence the  choice of working in the domain  $Q_{L,T}:=Q_L\times(0,T)$ with $Q_L:=(0,L)^2$, with $L$ very large, and with horizontally periodic boundary conditions.
We recall that the Ginzburg-Landau functional is invariant under gauge-transformations:  two configurations $(u,A) $ and $(\hat u,\hat A)$  are called gauge-equivalent if there exists $\Phi \in H^2_{\mathrm{loc}}$ such that 
$$\left\{\begin{array}
{l}
u( x)= \hat u(x) e^{i \Phi(x)}\\
A(x) = \hat A(x)+ \nabla \Phi(x) \, .\end{array} \right.$$
The physical 
quantities are  gauge-invariant (i.e., invariant under a gauge-transformation). This includes the magnetic field $\nabla\times A$,  the energy, the density $\rho= |u|^2$ and the superconducting current defined as
\begin{equation}\label{defj}
j_A:=
\mathrm{Re} (-i\bar{u} \nabla_A u)=
\frac12 \left(- i\bar{u} \nab_A u + iu \overline{\nab_A {u}}\,\right)\,.
\end{equation} 

We will work in  the space 
$H^1_{\mathrm{per}}(Q_{L,T}\times \R;\C\times\R^3) $ defined  to be the set of $ (u,A)\in H^1_{\mathrm{loc}}$ such that  $\EGL[u,A]$ is finite and for every $\vec{k}\in \mathbb{Z}^2\times\{0\}$, $(u( \cdot + L \vec{k}), A (\cdot + L \vec{k}))$ is gauge-equivalent to $(u,A)$ (this periodic setting was rigorously formalized in \cite{Ode67}, see also \cite{Du99}).
All gauge-invariant quantities,
such as $\rho$, $j_A$ and $B$, are then   $Q_L$-periodic.
We stress that periodicity is only assumed in the first two variables. We will also call such pairs $(u,A)\in H^1_{\mathrm{per}}$ admissible.

Our main result, characterizing the energy in the regime of small applied fields $\bext$ and large and thick enough samples, is 
\begin{theorem}
For any   $\kappa,\bext,L,T>0$  such that
\begin{equation*}
  \bext L^2\in 2\pi\Z, \quad 8\bext\le  \kappa \le \frac12\,,\quad
  \kappa T\ge 1
\end{equation*}
and $L$ is sufficiently large (in the sense of \eqref{eqLadmissible}), one has
\begin{equation}\label{scalingresult}
  \min_{H^1_{\mathrm{per}}} \EGL[u,A]-(\kappa\sqrt2 \bext-\bext^2)L^2T\sim \min \left\{\bext \kappa^{3/7} T^{3/7} L^2, 
\bext^{2/3} \kappa^{2/3}  T^{1/3}L^2\right\} \,.
\end{equation}
\end{theorem}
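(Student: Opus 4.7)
The plan is to prove matching upper and lower bounds, both based on a preliminary Bogomoln'yi-type splitting of $\EGL$ that extracts the bulk Meissner contribution and reduces the remaining energy to the sharp-interface functional studied in \cite{ChoksiKohnOtto2004,ChoksiContiKohnOtto2008}. Using the horizontal first-order operator $\partial_A^- := (\partial_1-iA_1) - i(\partial_2-iA_2)$ and the pointwise identity
\begin{equation*}
|\nabla_A^h u|^2 = |\partial_A^- u|^2 - \rho B_3 - (\partial_1 j_A^2 - \partial_2 j_A^1),
\end{equation*}
one completes squares against $\tfrac{\kappa^2}{2}(1-\rho)^2$ and against the outer magnetic term to obtain a decomposition of the form
\begin{equation*}
\EGL[u,A] = (\kappa\sqrt2\,\bext - \bext^2)L^2T + c_0\,\kappa\!\int_{Q_{L,T}}\!|\nabla\rho|\,dx + \int_{Q_L\times\R}|\nabla\times A - \Bext|^2\,dx + \mathcal{R}[u,A],
\end{equation*}
where $c_0>0$ is universal and $\mathcal{R}\ge 0$ measures the deviation from the one-dimensional optimal profile and from a self-dual ansatz. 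The last three terms have exactly the structure of the sharp-interface energy of \cite{ChoksiKohnOtto2004,ChoksiContiKohnOtto2008}, whose minimal value is known to scale as the right-hand side of \eqref{scalingresult}.

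For the upper bound I would transplant the branching construction of \cite{ChoksiKohnOtto2004,ChoksiContiKohnOtto2008} -- a self-similar family of vertical normal tubes bifurcating repeatedly as $x_3 \to 0$ or $x_3 \to T$ -- and lift it to admissible GL fields as follows. Take $\rho$ to be a tanh-like profile based on the signed distance to the set of normal tubes $\Omega$, rescaled to transition on scale $1/\kappa$ and realise the one-dimensional optimum, so that the interfacial integral above matches $c_0\kappa\,\mathrm{Per}\,\Omega$ up to controllable corrections. Prescribe the phase $\varphi$ of $u$ to wind by $2\pi n_i$ around the $i$-th tube, where $n_i$ is the integer total flux it carries -- feasible because $\bext L^2\in 2\pi\Z$ and the construction makes each $n_i$ a positive integer. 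Choose $A$ inside the slab so that $\nabla\varphi - A = 0$ on $\{\rho=1\}$, making $\mathcal{R}$ negligible, and extend $A$ to $Q_L\times\R$ as the magnetostatic extension of the induced vertical surface flux. Optimizing the branching parameters then produces the two scalings $\bext^{2/3}\kappa^{2/3}T^{1/3}L^2$ and $\bext\kappa^{3/7}T^{3/7}L^2$, with crossover at $\bext\sim\kappa^{5/7}/T^{2/7}$, exactly as in \cite{ChoksiContiKohnOtto2008}.

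For the lower bound the decomposition above furnishes the perimeter and the stray-field contributions for free, so it remains to run the ansatz-free argument of \cite{ChoksiContiKohnOtto2008} in the presence of a diffuse interface. That argument uses the pointwise Meissner condition $\rho B_3 = 0$ to identify the vertical flux with $\bext\uno_{\{\rho = 0\}}$ up to a harmonic extension. Here only a weak Meissner estimate is available: $\mathcal{R}$ controls $\int \rho|\nabla\varphi - A|^2$, which by duality against divergence-free horizontal test fields bounds a negative-order Sobolev norm of $\rho B_3$ on each horizontal slice. I would insert this weak bound at the step of \cite{ChoksiContiKohnOtto2008} where $B_3$ is compared to $\bext\uno_{\{\rho=0\}}$, and absorb the resulting $H^{-s}$ error into the perimeter and stray-field terms by interpolation.

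The main obstacle is this last step. To recover both exponents $\tfrac{2}{3}$ and $\tfrac{3}{7}$, the weak Meissner defect must be controlled simultaneously at every dyadic horizontal Fourier scale up to the optimal branching scale -- which itself depends on the regime -- and the slicing height along $x_3$ must be chosen jointly with that Fourier cutoff. A secondary difficulty on the construction side is local flux quantization: the integer flux assigned to each tube must be compatible with the winding of the constructed phase, which constrains the admissible refinement ratios at each generation and requires a rounding analysis that preserves the asymptotic self-similarity of the branching.
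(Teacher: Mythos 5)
Your high-level plan (Bogomoln'yi splitting, branching upper bound, ansatz-free lower bound adapted to a weak Meissner condition) is the route the paper takes, but the decomposition on which you build everything is not valid, and that is a genuine gap. After completing the square in $B_3$ against $\frac{\kappa^2}{2}(1-\rho)^2$ and using the Bogomoln'yi identity, the residual functional one actually obtains (Lemma~\ref{lemmaseparatebulk}, eq.~\eqref{eqdefE}) is a sum of $(1-\kappa\sqrt2)|\nabla'_Au|^2$, $\kappa\sqrt2|\calD_A^3u|^2$, $|\partial_3 u-iA_3u|^2$, $(B_3-\frac{\kappa}{\sqrt2}(1-\rho))^2$, $|B'|^2$, and the exterior field term; it contains no $BV$ summand. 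There is no identity $\EGL=\text{bulk}+c_0\kappa\int|\nabla\rho|+\int|\nabla\times A-\Bext|^2+\mathcal{R}$ with $\mathcal{R}\ge0$. The reason is that the only double-well term $\frac{\kappa^2}{2}(1-\rho)^2$ present in $\EGL$ is entirely consumed in forming $(B_3-\frac{\kappa}{\sqrt2}(1-\rho))^2$ together with the bulk constant, so there is nothing left to complete a Modica--Mortola square against $|\nabla\rho^{1/2}|^2$. What survives is at most $\kappa^2\rho(1-\rho)^2$, and even that is not free: controlling it by $E(z)$ requires first proving the weak Meissner bound $|\int_{Q(z)}\rho B_3(1-\rho)|\lsim E(z)$ from the Bogomoln'yi identity (this is \eqref{eqsectionrhohunemnorho}), and only then does $\kappa\int_{Q(z)}|\nabla'\chi|\lsim E(z)$ follow with $\chi=(1-\rho)^2$ (\eqref{eqnablaprimechi}). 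So the perimeter is recovered through a chain of one-sided slice estimates that already uses the weak Meissner input, not as a nonnegative algebraic summand, and the step ``the last three terms have exactly the sharp-interface structure and its known scaling applies'' is not available as a black box.

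Even granting a perimeter bound, the sharp-interface lower bound also relies essentially on the pointwise constraint $(1-\chi)B=0$, which here is only available in a weak form, as you acknowledge. Your sketch for closing this (``bound $\rho B_3$ in a negative-order norm on each slice and absorb by interpolation'') is not made concrete, and the real work is precisely there: the paper builds a concentration test function $\psi$ from $\chi=(1-\rho)^2$ via Lemma~\ref{lemmatestfunction}, compares $\int_{Q(z)}B_3\psi$ to $\int_{Q(0)}B_3\psi$ with a transport estimate that treats $\rho B'$ and $(1-\rho)B'$ separately (Lemma~\ref{lemmatransportlong}), and then balances six terms rather than four, two of which are new relative to the sharp-interface case. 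On the construction side your sketch is closer to what happens, but the device that makes local flux quantization in every tube compatible with self-similar refinement --- the rectangle-subdivision algorithm of Lemma~\ref{lemmasubdivideflux}, which forces integer flux at each generation while keeping aspect ratios and areas uniformly comparable --- together with the requirement $\kappa\rho_I\gsim 1$ at the finest scale (needed so that smearing $\rho$ on scale $1/\kappa$ costs only $O(\kappa\int|D\chi|)$, eq.~\eqref{eqestchiuppernbd}), is exactly the part your proposal names as an obstacle but does not resolve.
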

The result will follow  from Theorem \ref{theolowerbGL} and Theorem \ref{theoupperbGL} below
using Lemma \ref{lemmaseparatebulk} to separate the bulk contribution.
The notation $a\sim b$ means that a universal constant $c>0$ exists, such that $c^{-1}a\le b\le ca$.

The scaling result \eqref{scalingresult} is in the end the same as in \cite{ChoksiKohnOtto2004,ChoksiContiKohnOtto2008}, after some rescaling of the lengths and magnetic field intensity. As in those works, the minimum in the right-hand side reflects the fact that two types of construction are needed, one corresponding to the regime where 
$\bext \kappa^{3/7} T^{3/7} L^2\ll \bext^{2/3} \kappa^{2/3}  T^{1/3}L^2$ and one corresponding to the opposite case.  In both cases, the construction that gives the optimal scaling law is that of a self-similar branching tree of normal region (where the magnetic field penetrates) which is symmetric with respect to the $x_3=T/2$ plane, and refines further and further as $x_3 $ approaches $0$ and $T$.
The optimal ``opening ratio" of the self-similar tree depends on the parameters of the problem and has to be chosen differently 
in the two regimes above.

A finer analysis in the asymptotic regime $\bext\to0$, including $\Gamma$-convergence
to a reduced model with energy concentrated on lines, will be discussed
in  \cite{ContiGoldmanOttoSerfaty}.

\medskip 

This paper is organized as follows. In Section \ref{secprelim} we show how the bulk contribution  energy can be algebraically separated, via the Bogomoln'yi operator,  and we define in 
\eqref{eqdefE} the functional $E$ on which we shall focus for most of the paper. In Section \ref{seclowerb}
we prove the lower bound, first for the sharp-interface version of the problem, and then for $E$. 
In Section \ref{secupperb} we prove the corresponding upper bounds; again we first work on
the sharp-interface problem and then extend the upper bound to the full  Ginzburg-Landau functional. 
\section{Preliminaries}\label{secprelim}
\subsection{Preliminary on notation}
We use a prime to indicate the first two components of a vector in $\R^3$, and
identify $\R^2$ with $\R^2\times\{0\}\subset\R^3$.
Precisely, for $a\in \R^3$ we write $a'=(a_1,a_2,0)\in\R^2\subset\R^3$;  given
two vectors $a,b\in\R^3$ we write  briefly $a'\times b'=(a\times b)_3 =
(a'\times b')_3$.  

We shall denote sections of $Q_{L,T}$ by
$Q(z)=Q_L\times\{z\}$, for integrals over $Q(z)$ we write $dx'$ instead of $d\calH^2(x')$.
In the entire paper we let $\kappa$, $\bext$, $L$, $T$ be positive parameters which obey
\begin{equation}\label{eqparameters}
  \bext\le \frac18 \kappa\,, \hskip1cm \kappa  \le \frac12\,,\hskip1cm\text{ and}\hskip1cm
  \kappa T\ge 1\,.
\end{equation}
In many parts we shall additionally require the quantization condition  $\bext L^2\in 2\pi\Z$.

By $a\lsim b$ or $b\gsim a$ we mean that a universal constant $c>0$ exists (which may change
from line to line but 
does not depend on the parameters of the problem) such that $a\le c b$. 
By $a\sim b$ we mean $a\lsim b$ and $b\lsim a$ (with two different implicit constants).

We denote by $H^{1/2}(Q_L)$ the space of traces of $Q_L$-periodic functions
$u\in H^1_\loc(Q_L\times(-\infty,0))$ with $\nabla u\in L^2$, and use the homogeneous norm
$\|u\|_{H^{1/2}(Q_L)}:=\inf 
\{\|\nabla u\|_{L^{2}(Q_L\times(-\infty,0))}\}$, where the infimum is taken over all possible extensions. We denote
by $H^{-1/2}(Q_L)$ its dual space.

\subsection{Separating the bulk energy}Our first step is to subtract the bulk contribution to $\EGL$, which will lead us to 
the definition of the energy $E$, which only contains the contributions of the microstructure.
The precise formula for $E$ is given in (\ref{eqdefE}) below. 
This  is done  as in \cite{ss} via an algebraic relation which involves the operator  $\calD_A$ defined as follows: 
\begin{equation}\label{DA} \calD_A^ku := (\nabla_A u)_{k+2} - i (\nabla_A u)_{k+1} \,,
\end{equation}
where components are understood cyclically (i.e., $a_k=a_{k+3}$). 
In particular, 
\begin{equation*}
  \calD_A^3u =   (\partial_2 u - i A_2 u) - i (\partial_1 u
 - i A_1 u) 
 = (\nabla_A u)_2 - i (\nabla_A u)_1 \,.
\end{equation*}
The operator $\calD_A$ corresponds to a ``creation operator" for a magnetic Laplacian in quantum mechanics. It was introduced in the context of Ginzburg-Landau by Bogomoln'yi  to prove the self-duality of the Ginzburg-Landau functional at $\kappa=1/\sqrt{2}$, cf. e.g \cite{jaffetaubes}. His proof relies on identities similar to the next one.

\begin{lemma}\label{lemmaformula}
With the notation above, one has
\begin{equation*}
  |\nabla'_A u|^2 =  |\calD_A^3u|^2 + \rho B_3 + \nabla'\times 
   j_A' \end{equation*}
and, for any $k=1,2,3$,
\begin{equation*}
   |(\nabla_A u)^{k+1}|^2 +    |(\nabla_A u)^{k+2}|^2= |\calD_A^ku|^2
   + \rho B_k + (\nabla\times j_A)_k  \,.
\end{equation*}
\end{lemma}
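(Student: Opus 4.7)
This is a pointwise algebraic identity, and I would prove it by a direct computation organized around the commutator of the covariant derivatives. With the shorthand $D_j := \partial_j - iA_j$, we have $(\nabla_A u)_j = D_j u$, $\calD_A^3 u = D_2 u - iD_1 u$, and $(j_A)_k = \Im(\bar u\, D_k u)$.

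The first ingredient is the expansion
\begin{equation*}
|\calD_A^3 u|^2 = (D_2 u - iD_1 u)(\overline{D_2 u} + i\overline{D_1 u}) = |D_1 u|^2 + |D_2 u|^2 - 2\Im(\overline{D_1 u}\, D_2 u),
\end{equation*}
whence $|\nabla'_A u|^2 - |\calD_A^3 u|^2 = 2\Im(\overline{D_1 u}\, D_2 u)$. The second is the commutator identity $[D_1, D_2] = -iB_3$, an immediate consequence of the expansion $D_j D_k u = \partial_j\partial_k u - i(\partial_j A_k) u - iA_k \partial_j u - iA_j \partial_k u - A_j A_k u$ and the antisymmetrization in $(j,k)$.

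The third step is to compute $\nabla'\times j_A' = \partial_1(j_A)_2 - \partial_2(j_A)_1$. Differentiating $(j_A)_k = \Im(\bar u\, D_k u)$ using $\partial_j \bar u = \overline{D_j u} - iA_j \bar u$ and $\partial_j (D_k u) = D_j D_k u + iA_j D_k u$, the two $A_j$-terms cancel inside $\Im(\cdot)$ and yield
\begin{equation*}
\partial_j(j_A)_k = \Im\bigl(\overline{D_j u}\, D_k u + \bar u\, D_j D_k u\bigr).
\end{equation*}
Subtracting the $(j,k)=(2,1)$ version from the $(j,k)=(1,2)$ version, the first summand contributes $2\Im(\overline{D_1 u}\, D_2 u)$ (because $\overline{D_2 u}\, D_1 u$ is the complex conjugate of $\overline{D_1 u}\, D_2 u$, so its imaginary part is opposite), while the second contracts via the commutator to $\Im(\bar u \cdot (-iB_3) u) = -\rho B_3$. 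Hence $\nabla'\times j_A' = 2\Im(\overline{D_1 u}\, D_2 u) - \rho B_3$, and combining with the first step delivers the identity for $k=3$.

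The identities for $k=1,2$ follow verbatim after cyclic permutation of the coordinate indices; the definition of $\calD_A^k$ is arranged precisely to make this symmetry manifest. The only obstacle is careful sign bookkeeping with the imaginary parts, in particular tracking the order of the factors inside $\Im(\cdot)$; no analytical or regularity subtlety arises, since everything is pointwise.
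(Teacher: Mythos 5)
Your proof is correct and follows essentially the same direct computation as the paper: expand $|\calD_A^3 u|^2$ to isolate the cross term $2\Im(\overline{D_1 u}\,D_2 u)$, expand $\nabla'\times j_A'$ to produce the same cross term plus $-\rho B_3$, and subtract. The one cosmetic difference is that you package the source of the $\rho B_3$ term via the commutator identity $[D_1,D_2]=-iB_3$, whereas the paper obtains it by expanding $\bar u\,\nabla'\times(A'u)$ directly; these are the same algebra, and your framing is arguably slightly cleaner since it makes the cyclic symmetry in $k$ and the magnetic-Schr\"odinger structure explicit.
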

\begin{proof} We only prove the first relation, the other follows by
   relabeling coordinates. 
Notice that
\begin{equation*}
  \overline{\nabla_A u}=\nabla_{-A}\bar u\,.
\end{equation*}
We compute
 \begin{eqnarray*}
   |\calD_A^3u|^2 &=& 
\left|  (\nabla_A u)_2 - i (\nabla_A u)_1 \right|^2\\
 &=& 
\left(  (\nabla_A u)_2 - i (\nabla_A u)_1 \right)
\left( ( \overline{\nabla_{A} u})_2 + i (\overline{\nabla_{A}  u})_1
\right) \\ 
&=&|(\nabla_Au)_1|^2 + |(\nabla_Au)_2|^2 -
i(\nabla_Au)_1(\overline{\nabla_A u})_2 +
i(\nabla_Au)_2(\overline{\nabla_A u})_1\\
&=& |\nabla_A'u|^2 -i (\nabla'_Au)\times (\overline{\nabla_A' u})
\end{eqnarray*}
and 
\begin{eqnarray*}
  \nabla'\times (-i\bar u\nabla'_Au)&=&-i(\nabla' \bar u)\times \nabla_A' u
- i \bar u\nabla'\times(\nabla_A'u)
\\
&=& -i(\nabla' \bar u+iA'\bar u)\times(\nabla_A'  u)
-A'\bar u\times(\nabla_A'  u)  
-\bar u \nabla'\times (A'u)\\
&=&-i (\overline{\nabla_A'  u}) \times(\nabla_A'  u)
-A'\bar u \times(\nabla'u) -|u|^2\nabla'\times A' 
- (\nabla'u)\times (A'\bar u)\\
&=&-i (\overline{\nabla_A'  u}) \times(\nabla_A'  u)
-|u|^2\nabla'\times A'\,.
\end{eqnarray*}
Since the vector product is antisymmetric, the last expression is real.
Recalling the definition of $j_A$ from \eqref{defj}
we obtain
\begin{equation*}
  \nabla'\times j'_A
  =\mathrm {Re} \nabla'\times[ -i\bar u \nabla_A'u]
=-i (\overline{\nabla_A'  u}) \times(\nabla_A'  u)
-|u|^2\nabla'\times A'\,.
\end{equation*}
Adding terms concludes the proof.
\end{proof}

We now show that the flux of $B_3$ over every section is constant, due to 
the divergence-free condition.
\begin{lemma}\label{lemmaHmeno12}
    Let $B \in  L^2_\loc(\R^3 ;\R^3) $  be $Q_L$-periodic and obey 
    $\Div  B = 0 $. 
    Then the quantity 
    \begin{equation*}
	\varphi(z):=\int_{Q(z)}  B_3\,dx' 
    \end{equation*}
    does not depend on $z$. In particular, if $B - \bext e_3 \in L^2(Q_L\times \R;\R^3)$ (implied by the finiteness of $\EGL$) then for all $z\in \R$,
    \begin{equation*}
\int_{Q(z)}  B_3\,dx'  = \bext L^2.
    \end{equation*}
\end{lemma}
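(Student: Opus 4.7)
The plan is to prove that $\varphi$ has vanishing distributional derivative on $\R$, hence is constant almost everywhere, and then identify the constant in the case $B-\bext e_3\in L^2$.

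First, since $B\in L^2_\loc(\R^3;\R^3)$ and $B$ is $Q_L$-periodic, Fubini's theorem ensures that the map $z\mapsto\int_{Q(z)} B_3\,dx'$ defines an element of $L^1_\loc(\R)$, which I take as the definition of $\varphi$. I would then compute its distributional derivative by testing against an arbitrary $\eta\in C^\infty_c(\R)$. The natural test function on $\R^3$ is $\psi(x',x_3):=\eta(x_3)$, restricted to the slab $Q_L\times\R$; its gradient is $\eta'(x_3)e_3$. Integrating $\Div B=0$ against $\psi$ over $Q_L\times\R$, after integration by parts one gets the volume term $\int_{Q_L\times\R} B_3\,\eta'(x_3)\,dx$ together with lateral boundary terms on $\partial Q_L\times\R$; by the $Q_L$-periodicity of $B$ (and since $\eta$ depends only on $x_3$) these lateral terms cancel pairwise. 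By Fubini,
\begin{equation*}
  0 = \int_{Q_L\times\R} B_3(x',x_3)\,\eta'(x_3)\,dx = \int_{\R} \varphi(z)\,\eta'(z)\,dz,
\end{equation*}
which says $\varphi' = 0$ as a distribution. Hence $\varphi$ agrees a.e.\ with a constant, and, choosing this constant as the representative, $\varphi(z)$ is independent of $z$.

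For the identification of the constant when $B-\bext e_3\in L^2(Q_L\times\R;\R^3)$, I would apply Cauchy--Schwarz slicewise:
\begin{equation*}
  |\varphi(z)-\bext L^2|^2 = \left|\int_{Q(z)} (B_3-\bext)\,dx'\right|^2 \le L^2 \int_{Q(z)} |B_3-\bext|^2\,dx'.
\end{equation*}
Integrating in $z$ over $\R$ and using Fubini shows $\varphi-\bext L^2\in L^2(\R)$. Since $\varphi$ is a constant, that constant must be $\bext L^2$, proving the second claim.

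The only subtle point is the justification of the distributional identity $\int B\cdot\nabla\psi\,dx=0$ for the non-compactly-supported test function $\psi=\eta(x_3)$; this is handled by approximating $\uno_{Q_L}(x')\eta(x_3)$ by compactly supported smooth functions and exploiting the $Q_L$-periodicity of $B$ to see that the approximation error in the two lateral directions vanishes in the limit (each lateral face appears twice with opposite outward normals and identical values of $B$). Everything else is routine, so I do not expect any serious obstacle.
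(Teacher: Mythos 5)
Your proof is correct and follows essentially the same route as the paper: test $\Div B = 0$ against a function of $x_3$ alone, cancel the lateral boundary contributions by $Q_L$-periodicity to conclude $\varphi' = 0$ distributionally, and then identify the constant via a Cauchy--Schwarz / $L^2$-integrability argument. Your treatment of the second step, applying Cauchy--Schwarz to $B_3 - \bext$ and integrating over $z$, is a clean way to make precise the paper's one-line remark $\int_{Q(z)} B_3^2\,dx' \ge \varphi(z)^2/L^2$.
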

\begin{proof}
 By the periodicity condition we can test  the relation $\Div B=0$ (which is true since $B = \nabla \times A$)  with a function $\theta\in C^1_c(\R)$, depending on $x_3$ alone.  This yields that for any  $\theta \in C^1_c(\R)$
 \begin{equation*}
 0=  \int_{Q_L\times \R} B_3 \theta'(x_3) dx= \int_\R \varphi(z) \theta'(z) \, dz\,.
 \end{equation*}
 It follows that $\varphi$ is constant. 
 The second assertion follows from the fact that 
$\int_{Q(z)} B_3^2 \, dx' \ge \varphi(z)^2/L^2$.
\end{proof}

At this point we are ready to separate the bulk term, and define the microstructure functional we shall
study below.
\begin{lemma}\label{lemmaseparatebulk}
For every admisible pair $(u,A)$ and any parameter set obeying (\ref{eqparameters}) one has
\begin{equation*}
  \EGL[u,A]= (\kappa\sqrt2 \bext-\bext^2)L^2T + E[u,A]\,,
\end{equation*}
where
\begin{alignat}1
  E[u,A]&:=
\int_{Q_{L,T}} \left[(1-\kappa \sqrt2) |\nabla_A' u |^2 + \kappa\sqrt2
  |\calD_A^3u|^2 + 
   |\partial_3 u -
  i A_3 u|^2 \right]dx \nonumber \\
& +  \int_{Q_{L,T}}
  \left(B_3 - \frac{\kappa}{\sqrt2} (1-\rho)\right)^2dx  \nonumber \\
&+  \int_{Q_L\times \R} |B'|^2 dx +
   \int_{Q_L\times(\R\setminus(0,T))} |B_3 -\bext|^2dx \,,
   \label{eqdefE}
\end{alignat}
and as above $B=\nabla \times A$, $\rho=|u|^2$.
\end{lemma}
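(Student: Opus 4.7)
The plan is to verify the identity by algebraically rearranging the integrand of $\EGL$ and routing all leftover terms into the bulk constant via Lemma~\ref{lemmaHmeno12} and a periodicity argument.

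First I would split $|\nabla_A u|^2 = |\nabla_A' u|^2 + |\partial_3 u - i A_3 u|^2$ and then further decompose $|\nabla_A' u|^2 = (1-\kappa\sqrt2)|\nabla_A' u|^2 + \kappa\sqrt2|\nabla_A' u|^2$. The first piece, together with the vertical kinetic term, gives the first and third summands of the first line of \eqref{eqdefE}. To the ``surplus'' $\kappa\sqrt2|\nabla_A' u|^2$ I apply the first identity of Lemma~\ref{lemmaformula}, obtaining
\begin{equation*}
\kappa\sqrt2|\nabla_A' u|^2 = \kappa\sqrt2|\calD_A^3 u|^2 + \sqrt2\kappa\rho B_3 + \kappa\sqrt2\,\nabla'\times j_A'.
\end{equation*}

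Next I would complete the square on $Q_{L,T}$ between the Ginzburg--Landau potential and the vertical magnetic term:
\begin{equation*}
\tfrac{\kappa^2}{2}(1-\rho)^2 + (B_3-\bext)^2 = \Bigl(B_3 - \tfrac{\kappa}{\sqrt2}(1-\rho)\Bigr)^2 + \sqrt2\kappa(1-\rho)B_3 - 2\bext B_3 + \bext^2.
\end{equation*}
The cross term $-\sqrt2\kappa\rho B_3$ produced here cancels exactly the $+\sqrt2\kappa\rho B_3$ produced in the previous step, leaving the linear-in-$B_3$ contribution $(\sqrt2\kappa - 2\bext)B_3 + \bext^2$. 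The horizontal piece $|B'|^2$ and the exterior piece of $(B_3-\bext)^2$ are carried over directly into $E$.

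To finish I would invoke two geometric facts. The curl-like term satisfies $\int_{Q_{L,T}} \kappa\sqrt2\,\nabla'\times j_A'\, dx = 0$, because slicewise $\int_{Q_L}\nabla'\times j_A'\,dx' = \int_{Q_L}(\partial_1 (j_A)_2 - \partial_2 (j_A)_1)\,dx' = 0$ by $Q_L$-periodicity of $j_A'$ (which is in $L^1_\loc$ since $|\nabla_A u|,|u|\in L^2$ follows from the finiteness of $\EGL$). Finally, by Lemma~\ref{lemmaHmeno12}, $\int_{Q(z)} B_3\,dx' = \bext L^2$ for every $z\in(0,T)$, so the leftover linear term integrates to
\begin{equation*}
(\sqrt2\kappa - 2\bext)\bext L^2 T + \bext^2 L^2 T = (\kappa\sqrt2\bext - \bext^2)L^2 T,
\end{equation*}
which is precisely the advertised bulk contribution.

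The computation is essentially bookkeeping; the only substantive inputs are Lemma~\ref{lemmaformula} (to introduce the Bogomoln'yi operator $\calD_A^3$ via a ``magic'' rewriting of $|\nabla_A' u|^2$) and Lemma~\ref{lemmaHmeno12} (to convert the mean flux of $B_3$ on each slice into $\bext L^2$). The main points where I expect to have to be careful are getting the signs right when completing the square, and justifying the slice-wise vanishing of the $j_A'$ curl under the $H^1_{\mathrm{per}}$ setting.
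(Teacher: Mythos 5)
Your proof is correct and follows essentially the same route as the paper: both use Lemma~\ref{lemmaformula} to introduce the Bogomoln'yi term $\kappa\sqrt2|\calD_A^3u|^2$ plus $\kappa\sqrt2\rho B_3$, use $Q_L$-periodicity of $j_A'$ to drop the curl term, and use Lemma~\ref{lemmaHmeno12} to replace the mean of $B_3$ on each slice by $\bext$. The only (cosmetic) difference is the order of operations --- you complete the square $\tfrac{\kappa^2}{2}(1-\rho)^2 + B_3^2 \to (B_3-\tfrac{\kappa}{\sqrt2}(1-\rho))^2$ first and then apply the flux identity, whereas the paper rewrites $\rho B_3 = (\rho-1)B_3 + B_3$, replaces the $B_3$ by $\bext$ slicewise, and then completes the square --- but the algebra and the inputs are the same.
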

\begin{proof}
Lemma \ref{lemmaformula} implies
\begin{equation*}
|\nabla_A' u |^2 =   (1-\kappa \sqrt2) |\nabla_A' u |^2 + \kappa\sqrt2 
|\calD_A^3u|^2 
+\kappa\sqrt2\rho B_3 + \kappa\sqrt2\nabla'\times j'_A\,.
\end{equation*}
The last term integrates to zero by the
periodicity of $j_A$. By Lemma 
\ref{lemmaHmeno12}, the average of the normal component of the magnetic field $B_3$ 
is $\bext$. Therefore for 
each fixed $z$, we have
\begin{alignat*}1
  \int_{Q(z)} |\nabla_A' u |^2 dx' =   & \int_{Q(z)}\left[ (1-\kappa \sqrt2) |\nabla_A' u |^2 + \kappa\sqrt2 
|\calD_A^3u|^2  +\kappa\sqrt2(\rho-1) B_3 + \kappa\sqrt2 \bext\right] dx'\,.
\end{alignat*}
We substitute and obtain, using $\int_{Q_{L,T}} (B_3-\bext)^2dx=\int_{Q_{L,T}} (B_3^2-\bext^2)dx$, 
\begin{alignat*}1
  \EGL[u,A]&=\int_{Q_{L,T}} \left[(1-\kappa \sqrt2) |\nabla_A' u |^2 + \kappa\sqrt2
  |\calD_A^3u|^2 + \left(B_3 - \frac{\kappa}{\sqrt2} (1-\rho)\right)^2 \right] dx \\
& +\int_{Q_{L,T}} \left[|\partial_3 u -
  i A_3 u|^2 -\bext^2 +\kappa\sqrt2 \bext  \right]dx\\
&+ \int_{Q_L\times \R} |(\nabla\times A)'|^2 dx+
  \int_{Q_L\times(\R\setminus(0,T))} |(\nabla\times A)_3 -\bext|^2dx\,.
\end{alignat*}
Thus,  the bulk energy is $\kappa\sqrt2 \bext-\bext^2 $, and the result follows.
\end{proof}

\subsection{Construction of test functions}
One main ingredient  of the proof of the lower bound is the following
concentration lemma. This can be seen as a combination of truncation
and mollification, and is closely related to Lemma 3.1 from
\cite{ChoksiContiKohnOtto2008} and Lemma 2.1 from  \cite{ContiNiethammerOtto2006}.
We formulate this lemma in generic dimension, with $Q_L^n=(0,L)^n$, for $Q_L^n$-periodic 
functions. In
the following only the $n=2$ case is used.
For $f\in L^1_\loc(\R^n)$ and $r>0$, we define $f_r\in C^0(\R^n)$ by 
averaging over $r$-balls, 
  \begin{equation}\label{eqdefaverage}
    f_{r}(x):= \frac{1}{|B_{r}|}
    \int_{B_{r}(x)} f(y) dy\,.
  \end{equation}
Notice that this operation preserves periodicity.
\def\err{r}
\begin{lemma}\label{lemmatestfunction}
  Let $\chi\in L^1_\loc(\R^n)$, $Q_L^n$-periodic, $\chi\ge 0$, $0< \ell\le \err$. Then there
  is $\psi\in  L^1_\loc(\R^n)$, $Q_L^n$-periodic, such that
  \begin{enumerate}
  \item $\displaystyle \psi(x)\ge  \chi_\ell (x) $;
  \item $\displaystyle  \sup\psi\le \sup \chi$;
  \item $\displaystyle  \int_{Q_L^n}\psi\, dy\le \frac{2^n\err^n}{\ell^n} \int_{Q_L^n}\chi\, dy$;
  \item $\displaystyle  \err\sup|\nabla\psi|\lsim \sup\chi$;
  \item  $\displaystyle  \err\int_{Q_L^n}|\nabla \psi|\, dy\lsim
  \frac{\err^n}{\ell^n} \int_{Q_L^n} \chi\, dy$. 
  \end{enumerate}
\end{lemma}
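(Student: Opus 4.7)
The plan is to construct $\psi$ as a smoothed pointwise upper envelope of the averaged function $\chi_\ell$ on the scale $\err$. Concretely, I would fix a lattice of centers $\{x_i\}$ with spacing of order $\err$, chosen compatibly with the $Q_L^n$-period, together with a smooth non-negative partition of unity $\{\eta_i\}$ subordinate to $\{B_\err(x_i)\}$ (so $\sum_i\eta_i\equiv 1$, $\supp\eta_i\subset B_\err(x_i)$, and $|\nab\eta_i|\lsim 1/\err$). Then I would set
\begin{equation*}
  a_i:=\sup_{B_\err(x_i)}\chi_\ell,\qquad \psi:=\sum_i a_i\,\eta_i,
\end{equation*}
which is smooth, $Q_L^n$-periodic, and non-negative by construction.

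Properties (i), (ii), and (iv) then follow almost directly from the definition. For (i), whenever $\eta_i(x)>0$ one has $x\in B_\err(x_i)$, so $\chi_\ell(x)$ is among the values over which the supremum $a_i$ is taken, hence $a_i\ge\chi_\ell(x)$; summing against $\sum_i\eta_i\equiv 1$ gives $\psi(x)\ge\chi_\ell(x)$. For (ii), $a_i\le\sup\chi_\ell\le\sup\chi$, so $\psi\le\sup\chi$. For (iv), at each point only $O(1)$ many $\eta_i$ are nonzero, each with $|\nab\eta_i|\lsim 1/\err$ and coefficient bounded by $\sup\chi$, yielding $|\nab\psi|\lsim\sup\chi/\err$.

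The substance is in (iii) and (v), which reduce to controlling the weighted sum of the coefficients $a_i$. The point is that for $y\in B_\err(x_i)$ the ball $B_\ell(y)$ over which $\chi_\ell(y)$ averages $\chi$ is contained in $B_{\err+\ell}(x_i)\subset B_{2\err}(x_i)$ (using $\ell\le \err$), whence
\begin{equation*}
  a_i\le \frac{1}{|B_\ell|}\int_{B_{2\err}(x_i)}\chi.
\end{equation*}
Combining this with $\int \eta_i\lsim\err^n$ and the uniformly finite overlap of the enlarged balls $\{B_{2\err}(x_i)\}$ yields
\begin{equation*}
  \int_{Q_L^n}\psi=\sum_i a_i\int_{Q_L^n}\eta_i\lsim\frac{\err^n}{\ell^n}\int_{Q_L^n}\chi,
\end{equation*}
which is (iii), and repeating the argument with $|\nab\eta_i|\lsim 1/\err$ in place of $\eta_i\le 1$ gives the extra factor of $1/\err$ that produces (v).

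The hard part is pinning down the explicit dimensional constant $2^n$ in (iii): the partition-of-unity argument above naturally produces some (possibly larger) constant $C_n$. To recover exactly $2^n$, I would replace balls by axis-aligned cubes of side $\err$ tiling $\R^n$ and use enlarged cubes of side $2\err$ in the definition of the coefficients; the direct Fubini identity
\begin{equation*}
  \int_{\R^n}\frac{1}{\ell^n}\int_{x+(-\err,\err)^n}\chi(y)\,dy\,dx=\frac{(2\err)^n}{\ell^n}\int_{\R^n}\chi
\end{equation*}
then gives the covering factor $2^n\err^n/\ell^n$ exactly, at the cost of building a (smooth, $Q_L^n$-periodic) partition of unity subordinate to the cube tiling and checking that the envelope property (i) survives this refinement.
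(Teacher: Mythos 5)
Your construction is a genuinely different route from the paper's. The paper takes
\begin{equation*}
  \psi := \Bigl(\min\Bigl\{\tfrac{2^n r^n}{\ell^n}\chi_{2r},\,1\Bigr\}\Bigr)_r,
\end{equation*}
i.e. rescale the $2r$-average of $\chi$, truncate at $\sup\chi$ (after normalizing $\sup\chi=1$), then mollify over $r$-balls. Item (iii) is then immediate with the exact constant: dropping the $\min$ and using that averaging over $r$-balls preserves the integral gives $\int\psi\le\frac{2^n r^n}{\ell^n}\int\chi_{2r}=\frac{2^n r^n}{\ell^n}\int\chi$. Your alternative uses a lattice partition-of-unity sup-envelope, $\psi=\sum_i a_i\eta_i$ with $a_i=\sup_{B_r(x_i)}\chi_\ell$, which is arguably more intuitive as an ``upper envelope on scale $r$.'' Items (i), (ii), (iv), (v) all go through cleanly in your version, and the structure of the argument for (iii) (bound $a_i$ by $\frac1{|B_\ell|}\int_{B_{2r}(x_i)}\chi$ via $B_\ell(y)\subset B_{2r}(x_i)$ for $y\in B_r(x_i)$, $\ell\le r$, then sum with finite overlap) is also correct up to an unspecified constant.

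The one real gap, which you flag yourself, is the exact constant $2^n$ in (iii). Your cube-based fix does not close it: if you tile by cubes of side $r$ and want $B_\ell(y)\subset\tilde Q_i$ for every $y$ in the tile and every $\ell\le r$, the enlarged cubes $\tilde Q_i$ must have side $r+2\ell$, which can be as large as $3r$; the resulting overlap bound is $3^n$, not $2^n$. The Fubini identity you wrote is true but is a statement about the function $x\mapsto\frac1{\ell^n}\int_{x+(-r,r)^n}\chi$, which is not your $\psi$; your $\psi$ involves local suprema $a_i$ and bump functions, and that supremum step is exactly what introduces an overlap-dependent constant that Fubini alone cannot eliminate. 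So if the exact $2^n$ is wanted, the paper's truncate-and-mollify $\psi$ is the cleaner route. That said, this is a cosmetic point for the paper's purposes: the only place the sharp value $2^n$ is invoked is in the factor $4=2^2$ entering the choice of the admissible range $r\le(\kappa/8\bext)^{1/2}\ell$; with a constant $C_n>2^n$ one would simply strengthen the standing hypothesis $\bext\lsim\kappa$ by a corresponding universal factor, without affecting the scaling law.
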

For future reference we remark that these estimates immediately imply
\begin{equation}\label{eqlemmatestfunctl2}
 \|\psi\|_{L^2(Q^n_L)}^2\lsim \frac{r^n}{\ell^n} \|\chi\|_{L^\infty(Q^n_L)}\|\chi\|_{L^1(Q^n_L)}
 \text{ and }
 \|\nabla\psi\|_{L^2(Q^n_L)}^2\lsim \frac{r^{n-2}}{\ell^n} \|\chi\|_{L^\infty(Q^n_L)}\|\chi\|_{L^1(Q^n_L)}\,.
\end{equation}

\begin{proof}
 By homogeneity, it suffices to consider the case $\sup\chi=1$.
  Define 
  \begin{equation*}
    \psi(x):=\left(\min\left\{\frac{2^n\err^n}{\ell^n} \chi_{2\err}, 1\right\}\right)_\err(x) =
    \frac{1}{|B_\err|}     \int_{B_\err(x)} \min\left\{\frac{2^n\err^n}{\ell^n}\chi_{2\err}(y), 1\right\} dy\,,
  \end{equation*}
where we use the notation of (\ref{eqdefaverage}).
Clearly $\psi\le 1$, and (ii) follows. (iii) is immediate. 

To prove (i), observe first that $\chi_\ell\le \sup\chi= 1$.
Fix some $x$. Since $B_\ell(x)\subset B_{2\err}(z)$ for all $z\in
B_\err(x)$, we have
\begin{eqnarray*}
  \chi_\ell(x) &=& \frac{1}{|B_\ell|} \int_{B_\ell(x)} \chi \, dy\\
&\le &\frac{1}{|B_\ell|} \int_{B_{2\err}(z)} \chi  \, dy
= \frac{2^n\err^n}{\ell^n} \chi_{2\err}(z)\,.
\end{eqnarray*}
Therefore 
\begin{equation*}
  \chi_\ell(x)\le \min\left\{\frac{2^n\err^n}{\ell^n} \chi_{2\err}(z), 1\right\} \hskip1cm
  \forall   z\in B_\err(x)\,.
\end{equation*}
Averaging over $B_\err(x)$ we obtain
\begin{equation*}
  \chi_\ell(x)\le \frac{1}{|B_\err|} \int_{B_\err(x)}
 \min\left\{\frac{2^n\err^n}{\ell^n} \chi_{2\err}(z), 1\right\} dz=\psi(x)\,,
\end{equation*}
which proves (i).

Further, for any pair $x$, $z$ one has, writing
 $C_{\err}(x,z):=(B_r(x)\setminus B_r(x+z)) \cup ( B_r(x+z)\setminus B_r(x))$,
\begin{equation*}
  |\psi(x)-\psi(x+z)|\le \frac{|C_{\err}(x,z)|}{|B_{\err}|} 
\le  \frac{c_n}{\err}|z|\,,
\end{equation*}
which implies (iv). To prove (v), write analogously
\begin{equation*}
  |\psi(x)-\psi(x+z)|\le \frac{1}{|B_r|} \int_{C_{\err}(x,z)}
f\, dy
\end{equation*} 
where $f=\min\{2^nr^n\ell^{-n}\chi_{2r},1\}$.
Integrating in $x$ and estimating as above $|C_{\err}(x,z)|\lsim r^{n-1}|z|$ we get
\begin{equation*}
  \int_{Q_L^n}  |\psi(x)-\psi(x+z)|dx\le \int_{Q_L^n} f \frac{|C_{\err}(x,z)|}{|B_r|}dx
\lsim\frac{|z|}{r}  \int_{Q_L^n} f \, dx
\end{equation*}
and the proof is concluded.
\end{proof}
\def\no{
Note that the function constructed above is not smooth. 
A straightforward generalization is the following:
\begin{lemma}\label{lemmatestfunctionsmooth}
 Let $\chi\in L^1_\loc(\R^n)$, $Q_L^n$-periodic, $\chi\ge 0$, $0< \ell\le \err$. Then there
  is $\psi\in C^\infty(\R^n)$, $Q_L^n$-periodic, such that
  \begin{enumerate}
  \item $\displaystyle \psi(x)\gsim \chi_\ell(x)$;
  \item $\displaystyle  r^{|\alpha|} \sup |\nabla^\alpha\psi|\lsim  \sup 
  \chi$;
  \item $\displaystyle  r^{|\alpha|}
  \int_{Q_L^n}|\nabla^\alpha\psi|\, dx\lsim \frac{\err^n}{\ell^n}
  \int_{Q_L^n}\chi\, dx$.
  \end{enumerate}
The last two conditions hold for any multiindex $\alpha\in\N^n$, the constants may depend on $|\alpha|$.
\end{lemma}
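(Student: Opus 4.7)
My plan is to adapt the construction in Lemma \ref{lemmatestfunction}, replacing the hard average $\tfrac{1}{|B_r|}\uno_{B_r}$ by a smooth mollifier of the same scale. As before, by homogeneity I assume $\sup\chi=1$. Fix $\eta\in C^\infty_c(B_1)$ nonnegative with $\int\eta=1$, set $\eta_r(x):=r^{-n}\eta(x/r)$, and define
\[
  g(y):=\min\left\{\frac{2^n r^n}{\ell^n}\chi_{2r}(y),\,1\right\},\qquad
  \psi:=g*\eta_r.
\]
Since $g$ is bounded and $Q_L^n$-periodic and $\eta_r\in C^\infty_c(B_r)$, the convolution $\psi$ is automatically $C^\infty$ and $Q_L^n$-periodic.

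For (i), I reuse the pointwise inequality established in the proof of Lemma \ref{lemmatestfunction}: for every $x$ and every $z\in B_r(x)$ one has $\chi_\ell(x)\le g(z)$. Since $\eta_r\ge 0$ is supported in $B_r$, integration against $\eta_r(x-\cdot)$ preserves this lower bound, giving $\psi(x)\ge\chi_\ell(x)$, which is a bit stronger than the stated (i). For (ii) and (iii) I push derivatives onto the mollifier, writing $\nabla^\alpha\psi=g*\nabla^\alpha\eta_r$ with $\|\nabla^\alpha\eta_r\|_{L^1(\R^n)}\lsim r^{-|\alpha|}$ (implicit constant depending on $|\alpha|$). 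Young's inequality in the periodic setting, which follows immediately from Fubini, then yields
\[
  r^{|\alpha|}\sup|\nabla^\alpha\psi|\lsim\|g\|_{L^\infty(\R^n)}\le 1=\sup\chi
\]
and
\[
  r^{|\alpha|}\int_{Q_L^n}|\nabla^\alpha\psi|\,dx\lsim \|g\|_{L^1(Q_L^n)}\le \frac{2^n r^n}{\ell^n}\|\chi_{2r}\|_{L^1(Q_L^n)}=\frac{2^n r^n}{\ell^n}\int_{Q_L^n}\chi\,dx,
\]
where I use $g\le \tfrac{2^n r^n}{\ell^n}\chi_{2r}$ together with the translation invariance $\|\chi_{2r}\|_{L^1(Q_L^n)}=\|\chi\|_{L^1(Q_L^n)}$.

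No essential obstacle arises. The non-smoothness of $\psi$ in Lemma \ref{lemmatestfunction} comes solely from the indicator $\uno_{B_r}/|B_r|$ used in the outer averaging step, and replacing this by a smooth bump of the same scale $r$ trades the Lipschitz bounds (iv)--(v) of Lemma \ref{lemmatestfunction} for analogous bounds on all derivatives of $\psi$, at the sole price of $|\alpha|$-dependent constants.
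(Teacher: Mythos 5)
Your proof is correct and differs from the paper's only in one small, but arguably cleaner, technical choice. The paper's proof (which is only sketched) takes smooth radially symmetric mollifiers $\phi_\varepsilon(x)=\varepsilon^{-n}\phi(x/\varepsilon)$ with $\phi=\tilde\phi(|\cdot|)\in C_c^\infty(B_1)$ and $\tilde\phi>0$ on $[0,1/2]$, and uses them for \emph{both} the inner and outer averaging steps; to preserve the pointwise lower bound of step (i) it then enlarges the inner radius from $2r$ to $4r$ and lets all inequalities degrade by a constant depending on $\min_{[0,1/2]}\phi$. You instead keep the hard inner average $\chi_{2r}$ exactly as in Lemma \ref{lemmatestfunction} and only mollify the outer averaging with a smooth bump $\eta_r$ of unit mass and $\supp\eta_r\subset B_r$. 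Since $\eta_r\ge0$, the pointwise inequality $\chi_\ell(x)\le g(z)$ for $z\in B_r(x)$ established in the proof of Lemma \ref{lemmatestfunction} transfers directly to the convolution, so (i) comes out with constant $1$ (i.e.\ $\psi\ge\chi_\ell$, stronger than the stated $\gsim$) and no radius change is needed. Estimates (ii) and (iii) are the standard Young-type bounds obtained by moving $\nabla^\alpha$ onto $\eta_r$, and you correctly invoke the periodic versions $\|g*\nabla^\alpha\eta_r\|_{L^\infty}\le\|g\|_{L^\infty}\|\nabla^\alpha\eta_r\|_{L^1}$ and $\|g*\nabla^\alpha\eta_r\|_{L^1(Q_L^n)}\le\|g\|_{L^1(Q_L^n)}\|\nabla^\alpha\eta_r\|_{L^1(\R^n)}$, together with $\|\chi_{2r}\|_{L^1(Q_L^n)}=\|\chi\|_{L^1(Q_L^n)}$. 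One minor clarifying remark: the non-smoothness in Lemma \ref{lemmatestfunction} stems both from the hard outer kernel and from the kink in $t\mapsto\min\{t,1\}$, not solely from the former; your point stands, however, because a single smooth outer mollification regularizes both at once. Both proofs are valid; yours isolates precisely where smoothness is needed and avoids the extra constants.
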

\begin{proof}
  The proof is similar to the above, with the following difference: one takes a
  family of mollifiers $\phi_\e(x):=\e^{-n}\phi(x/\e)$, with
  $\phi(x)=\tilde\phi(|x|)\in C_c^\infty(B_1,[0,\infty))$, with $\tilde\phi>0$ on $[0,1/2]$.  The radius
  $2r$ should be replaced by $4r$, and all inequalities only survive
  up to a constant (depending on $\min_{[0,1/2]}\phi$). 
\end{proof}
}

\subsection{The sharp-interface functional}\label{secsharpinterface}
In closing this preliminary section, we introduce the sharp-interface version of the functional $E$ 
 and the corresponding function spaces. In the sharp-interface functional, 
a function denoted  $\chi$ represents the characteristic function of the normal phase, and is constrained to take values in $\{0,1\}$. Thus $\chi$ is formally the equivalent of $1-\rho$ and the approximate Meissner effect \eqref{apprxmeissner} is imposed via 
$$B(1-\chi)=0.$$

\begin{definition}\label{defadmisssharp}
  We say that a pair $B\in L^2_\loc(\R^3;\R^3)$, $\chi\in BV_\loc(\R^3;\{0,1\})$ is
  admissible for the sharp-interface functional if both of them are $Q_L$-periodic and
\begin{equation}\label{eqsidecondF}
\Div B=0 \text{ distributionally, and }B(1-\chi)=0 \text{ a.e.}
\end{equation}
The condition $\Div B=0$ is understood as $\int_{\R^3} B\cdot \nabla\theta\, dx=0$ 
for all test functions $\theta\in C^1_c(\R^3)$.

Given an admissible pair $(\chi,B)$ we set
\begin{equation}\label{eqdefF}
  F[\chi,B] := \int_{Q_{L,T}}  \kappa |D\chi|  +\int_{Q_{L,T}} \left[|B'|^2 + \chi\left(B_3
  - \frac{\kappa}{\sqrt2}\right)^2 \right]dx+
  \int_{Q_L\times[\R\setminus(0,T)]} |B-\bext e_3|^2 dx\,.
  \end{equation}
\end{definition}

This is the sharp-interface functional studied in 
\cite{ChoksiKohnOtto2004,ChoksiContiKohnOtto2008}.
In comparing with those papers,
it is important to notice that several quantities are scaled differently. 
In particular, lengths are rescaled by a factor of $L$, and magnetic fields  by
$\kappa/\sqrt2$. Precisely,
denoting by $\chi^*$, $B^*$, $E^*$, $\e^*$, $L^*$  and $b_a^*$ the objects used in 
\cite{ChoksiKohnOtto2004,ChoksiContiKohnOtto2008}, one has
\begin{equation*}
  \chi^*(x)=1-\chi(xL)\,,\hskip1cm B^*(x)=\frac{\sqrt2}{\kappa} B(xL)
\end{equation*}
and correspondingly
\begin{equation*}
E^*=\frac{2F}{L^3\kappa^2}\,,\hskip5mm
 \e^*=\frac2{\kappa L} \,,\hskip5mm
L^*=\frac{T}{L}\,,\hskip5mm 
b_a^*=\frac{\sqrt2}{\kappa}\bext\,.
  \end{equation*}

\section{Lower bound}\label{seclowerb}
\label{secoldfunctional}
\subsection{Lower bound with sharp interfaces}
In order to prepare some intermediate results and to explain the strategy of the proof in a simpler context, we
first prove the lower bound for the sharp-interface functional, recovering the result  from \cite{ChoksiContiKohnOtto2008}. 
In proving the lower bound the fields $\chi$ and $B$ will be fixed
 admissible functions,
 in the sense of Definition~\ref{defadmisssharp}, 
and we shall 
 simply denote by $F$ the total energy, and by $F(z)$ the part of the energy
 localized in the surface $Q(z)$, i.e., 
\begin{equation*}
  F(z):=\int_{Q(z)} \kappa |D'\chi|  +\int_{Q(z)} \left[  |B'|^2 + \chi\left(B_3
    - \frac{\kappa}{\sqrt2}\right)^2 \right]dx'\,.
\end{equation*}
Clearly $\int_0^T F(z) dz \le F[\chi,B]$.
In several lemmas we shall additionally focus on the case that the energy is bounded by
\begin{equation}\label{eqFzallgood}
F[\chi,B]\le \frac1{8} \kappa \bext L^2T\,,
\end{equation}
and that a $z\in (0,L)$ is given, so that
  \begin{equation}\label{eqFzzgood}
    F(z)\le \frac1{8} \kappa \bext L^2\,.
\end{equation}

The key strategy is to select a good section $Q(z)$ which has small energy. Since the energy is small,
and the flux of the magnetic field is the same on every section, the magnetic field necessarily
concentrates on a small subset, whose perimeter is controlled by the energy (interior estimate). 
At the same time, close to the surface the energy favours a uniformly distributed magnetic field
(exterior term). But ``moving around'' the  magnetic field as $z$ changes  is only possible, due to the divergence-free condition, if the tangential components $B'$ are nonzero, which are also penalized by the energy
(transport term). Making these three effects quantitative, and balancing them, leads 
to the lower bound.

\subsubsection{Equidistribution of the phases}
We first show  that the average value of $\chi$ across ``good'' sections
is the one that relaxation theory would predict, up to a factor. 
\begin{lemma}\label{lemmachiloc}
If the admissible pair $(\chi,B)$  obeys
(\ref{eqFzallgood}) and (\ref{eqFzzgood}) for some $z\in (0,T)$, then 
\begin{equation}\label{eqintchi1-2}
\int_{Q_{L,T}}  \chi \, dx\, \sim \,  \frac{\bext  L^2T}{\kappa}
\end{equation}
and
\begin{equation} \label{eqintchi-2}
\int_{Q(z)}  \chi \, dx'\, \sim \,  \frac{\bext  L^2}{\kappa}\,.
\end{equation}
\end{lemma}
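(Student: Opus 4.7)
The plan is to exploit the fact that, by admissibility, $B(1-\chi)=0$, so $\chi B_3 = B_3$ almost everywhere. Combined with the flux conservation statement from Lemma \ref{lemmaHmeno12}, this pins down $\int \chi B_3$ exactly, and then the section (or bulk) energy term $\int \chi(B_3 - \kappa/\sqrt 2)^2$ controls how far $B_3$ can deviate from $\kappa/\sqrt 2$ on the support of $\chi$. Balancing these two pieces of information by Cauchy--Schwarz will yield both the upper and lower bound on $\int\chi$.

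Concretely, for \eqref{eqintchi-2}, I write $M:=\int_{Q(z)}\chi\,dx'$ and decompose
\[
\bext L^2 \;=\; \int_{Q(z)} B_3\,dx' \;=\; \int_{Q(z)} \chi B_3\,dx' \;=\; \frac{\kappa}{\sqrt 2}\,M \;+\; \int_{Q(z)}\chi\!\left(B_3-\frac{\kappa}{\sqrt 2}\right)dx'.
\]
By Cauchy--Schwarz applied to the second term, and using $\int_{Q(z)}\chi(B_3-\kappa/\sqrt2)^2\,dx'\le F(z)\le \kappa\bext L^2/8$ from \eqref{eqFzzgood}, I obtain
\[
\Bigl|\tfrac{\kappa}{\sqrt 2}\,M - \bext L^2\Bigr| \;\le\; M^{1/2}\bigl(\tfrac18\kappa\bext L^2\bigr)^{1/2}.
\]
This is a quadratic inequality in $M^{1/2}$. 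Inserting the ansatz $M = c\,\bext L^2/\kappa$ and dividing by $\bext L^2$ reduces both the lower and upper bound to an elementary comparison of $|c/\sqrt 2 - 1|$ with $(c/8)^{1/2}$; this forces $c$ to lie in a bounded interval bounded away from $0$, i.e.\ $M\sim \bext L^2/\kappa$.

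For \eqref{eqintchi1-2}, I run the same argument one dimension higher. Integrating the flux identity over $z\in(0,T)$ and using $\chi B_3=B_3$ a.e.\ gives
\[
\int_{Q_{L,T}}\chi B_3\,dx \;=\; T\bext L^2 \;=\; \tfrac{\kappa}{\sqrt 2}\!\int_{Q_{L,T}}\chi\,dx + \int_{Q_{L,T}}\chi\bigl(B_3-\tfrac{\kappa}{\sqrt 2}\bigr)dx,
\]
and Cauchy--Schwarz together with the global bound \eqref{eqFzallgood} on $\int\chi(B_3-\kappa/\sqrt 2)^2\,dx$ yields the same quadratic inequality for $V:=\int_{Q_{L,T}}\chi\,dx$ in place of $M$, now with $\bext L^2$ replaced by $T\bext L^2$. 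The same elementary algebra delivers $V\sim \bext L^2 T/\kappa$.

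There is no real obstacle here: the proof is purely algebraic once one has noticed the identity $\chi B_3 = B_3$ a.e., the constancy of the vertical flux from Lemma~\ref{lemmaHmeno12}, and the control of $\int\chi(B_3-\kappa/\sqrt 2)^2$ by the energy. The smallness assumptions \eqref{eqFzallgood}--\eqref{eqFzzgood} with the explicit constant $1/8$ are precisely what is needed so that the Cauchy--Schwarz error term is strictly smaller than the main term $\bext L^2$ (respectively $T\bext L^2$), thereby preventing $M$ (respectively $V$) from being forced to a degenerate value.
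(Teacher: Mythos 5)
Your proposal is correct and is essentially the same argument the paper gives: the same identity $\chi B_3 = B_3$, the same flux-constancy input from Lemma~\ref{lemmaHmeno12}, and the same Cauchy--Schwarz step against the $\int\chi(B_3-\kappa/\sqrt 2)^2$ energy term, with the constant $1/8$ providing exactly the needed slack. The only cosmetic difference is at the last step, where the paper absorbs the Cauchy--Schwarz error into the main term via Young's inequality rather than solving the quadratic inequality in $M^{1/2}$ directly; these are equivalent.
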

 \begin{proof}
We start with the second assertion. Recalling that $\chi^2=\chi$, $\int_{Q(z)} (B_3-\bext)dx'=0$ (Lemma \ref{lemmaHmeno12}) and $B(1-\chi)=0$, we compute
   \begin{eqnarray*}
     \left|\int_{Q(z)}\left( \frac{\kappa}{\sqrt2} \chi-\bext\right)dx'\right|&=& 
     \left|\int_{Q(z)} \chi \left(\frac{\kappa}{\sqrt2}-B_3\right)dx'\right|\\
&\le&
     \left(\int_{Q(z)} \chi dx' \right)^{1/2}
\left(\int_{Q(z)}\chi\left(\frac{\kappa}{\sqrt2}-B_3\right)^2dx'\right)^{1/2}\\
&=&
     \left(\int_{Q(z)}\frac{\kappa}{\sqrt2} \chi dx' \right)^{1/2}
\left(\frac{\sqrt2}{\kappa}\int_{Q(z)}\chi\left(\frac{\kappa}{\sqrt2}-B_3\right)^2dx'\right)^{1/2}\\
&\le&
\frac14      \left|\int_{Q(z)} \frac{\kappa}{\sqrt2}\chi dx'\right| + \frac{\sqrt2}{\kappa}F(z)\,.
   \end{eqnarray*}
Therefore, recalling  (\ref{eqFzzgood}), 
\begin{equation*}
  \frac34 \int_{Q(z)}\frac{\kappa}{\sqrt2} \chi \, dx' \le \bext L^2 + \frac{\sqrt2}{\kappa}F(z)
  \le \frac54 \bext L^2
\end{equation*}
and
\begin{equation*}
  \frac54\int_{Q(z)} \frac{\kappa}{\sqrt2} \chi \, dx' \ge \bext L^2 - \frac{\sqrt2}{\kappa}F(z)
  \ge \frac34 \bext L^2\,.
\end{equation*}
   This concludes the proof of (\ref{eqintchi-2}). The proof of (\ref{eqintchi1-2}) is analogous, integrating over 
$Q_{L,T}$ instead of $Q(z)$.
 \end{proof}

\subsubsection{Interior term}
We show that on sections with small energy the magnetic field necessarily concentrates,
as captured by the test function $\psi$.
\begin{lemma}\label{lemmaoldinterior}
For any   $r\ge \ell>0$ and $z\in(0,T)$, if (\ref{eqFzzgood}) holds and  $\psi$ is the function 
constructed via Lemma \ref{lemmatestfunction} from $\chi(\cdot, z)$ then
\begin{equation*}
  \frac{\kappa}{\sqrt2}\int_{Q(z)} \chi\, dx' -
  \int_{Q(z)} B_3\psi\, dx'\lsim \ell F(z) + \left(
  \frac{r^2}{\ell^2} \frac{\bext L^2}{\kappa}\right)^{1/2} F^{1/2}(z)
\end{equation*}
for some universal $c>0$.
\end{lemma}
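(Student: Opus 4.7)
The goal is to bound above the quantity $I:=\frac{\kappa}{\sqrt 2}\int_{Q(z)}\chi\,dx'-\int_{Q(z)}B_3\psi\,dx'$ by the two error terms in the statement. The strategy is to exploit the Meissner-type constraint $B_3(1-\chi)=0$ and the properties of $\psi$ from Lemma~\ref{lemmatestfunction} to split $I$ into one term controlled by Cauchy--Schwarz against the ``bulk'' part $\int \chi(\kappa/\sqrt 2-B_3)^2$ of $F(z)$, and a second term controlled by the interfacial part $\kappa\int |D'\chi|$ of $F(z)$.

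First, since $B_3=B_3\chi$ a.e.\ by the admissibility condition \eqref{eqsidecondF}, I will write
\begin{equation*}
\frac{\kappa}{\sqrt 2}\chi-B_3\psi \;=\; \chi\psi\Bigl(\frac{\kappa}{\sqrt 2}-B_3\Bigr)\;+\;\frac{\kappa}{\sqrt 2}\,\chi(1-\psi).
\end{equation*}
For the first summand I apply Cauchy--Schwarz as in Lemma~\ref{lemmachiloc}, picking up $F(z)^{1/2}$ from the ``$\chi(\kappa/\sqrt2-B_3)^2$'' piece of $F(z)$ and the weight $\int_{Q(z)}\chi\psi^2\,dx'$ from the other factor. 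Since $0\le\psi\le\sup\chi=1$ by (ii) of Lemma~\ref{lemmatestfunction}, this weight is bounded by $\int\psi\,dx'\lsim (r^2/\ell^2)\int\chi\,dx'$ using (iii), and then \eqref{eqintchi-2} in Lemma~\ref{lemmachiloc} (whose hypotheses are in force) gives $\int\chi\,dx'\sim \bext L^2/\kappa$. This yields the second term of the claimed bound.

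For the second summand, I use (i) from Lemma~\ref{lemmatestfunction}, namely $\psi\ge \chi_\ell$, so $1-\psi\le 1-\chi_\ell$ and hence $\chi(1-\psi)\le \chi(1-\chi_\ell)$. Writing $1-\chi_\ell(x)=|B_\ell|^{-1}\int_{B_\ell(x)}(1-\chi(y))\,dy$ and using the pointwise inequality $\chi(x)(1-\chi(x+h))\le |\chi(x)-\chi(x+h)|$ valid for $\{0,1\}$-valued $\chi$, I obtain
\begin{equation*}
\int_{Q(z)}\chi(1-\chi_\ell)\,dx' \;\le\; \frac{1}{|B_\ell|}\int_{|h|\le\ell}\int_{Q(z)}|\chi(x)-\chi(x+h)|\,dx'\,dh.
\end{equation*}
Then the standard $BV$ translation estimate $\int|\chi(\cdot+h)-\chi|\le|h|\int|D'\chi|$ gives $\int\chi(1-\chi_\ell)\,dx'\lsim \ell\int_{Q(z)}|D'\chi|\lsim \ell F(z)/\kappa$, which after multiplication by $\kappa/\sqrt2$ yields the first term of the claimed bound.

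The main potential obstacle is the second step: one must be careful that $\psi$, although it satisfies $\psi\ge\chi_\ell$, is not pointwise bounded below by $\chi$ itself, so the decomposition has to put $\chi$ (not $\psi$) as the leading factor of the ``perimeter'' term, which is exactly what the identity $B_3\chi=B_3$ allows. Everything else is bookkeeping: combining the two bounds gives $I\lsim \ell F(z)+(r^2\bext L^2/(\ell^2\kappa))^{1/2}F(z)^{1/2}$, as required.
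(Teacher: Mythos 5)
Your proposal is correct and takes essentially the same approach as the paper: after expanding, your identity $\tfrac{\kappa}{\sqrt2}\chi - B_3\psi = \chi\psi(\tfrac{\kappa}{\sqrt2}-B_3) + \tfrac{\kappa}{\sqrt2}\chi(1-\psi)$ is exactly the decomposition the paper derives (via $B_3=B_3\chi$), your Cauchy--Schwarz bound using $\psi\le1$, property (iii), and \eqref{eqintchi-2} matches the paper's treatment of the first term, and your pointwise estimate $\chi(x)(1-\chi(x+h))\le|\chi(x)-\chi(x+h)|$ combined with the $BV$ translation bound is equivalent to the paper's use of $\chi^2=\chi$ and $\int|\chi-\chi_\ell|\le\ell\int|D'\chi|$ for the second term.
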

\begin{proof}
We write
\begin{equation*}
  B_3=B_3\chi = \frac{\kappa}{\sqrt2} \chi +
  \left(B_3-\frac{\kappa}{\sqrt2}\right)\chi\,. 
\end{equation*}
Testing with $\psi$ we get
\begin{eqnarray*}
  \int_{Q(z)} B_3\psi\, dx'&=& \frac{\kappa}{\sqrt2} \int_{Q(z)}\chi\psi\, dx' + 
 \int_{Q(z)}\left(B_3-\frac{\kappa}{\sqrt2}\right)\chi\psi\, dx'\,.
\end{eqnarray*}
Using Lemma \ref{lemmatestfunction}(i) we obtain
\begin{eqnarray*}
  \int_{Q(z)} \chi\psi \,dx'\ge \int_{Q(z)} \chi\chi_\ell \,dx'
&=&\int_{Q(z)}\chi^2\,dx' + \int_{Q(z)}\chi(\chi_\ell-\chi)dx'\\
&\ge& \int_{Q(z)} \chi \,dx'- \int_{Q(z)}
  |\chi-\chi_\ell| \,dx'
\end{eqnarray*}
and therefore
\begin{equation*}
 \frac\kappa{\sqrt2} \int_{Q(z)} \chi \,dx'
 \le \int_{Q(z)} B_3\psi\, dx'+ \frac\kappa{\sqrt2}\int_{Q(z)}
  |\chi-\chi_\ell| \,dx'-
  \int_{Q(z)}\left(B_3-\frac{\kappa}{\sqrt2}\right)\chi\psi\, dx'\,.
\end{equation*}
The second term can be estimated by
\begin{equation*}
  \int_{Q(z)}   |\chi-\chi_\ell| dx'\le \ell 
  \int_{Q(z)}   |D' \chi|\le \frac{\ell}{\kappa} F(z)\,,
\end{equation*}
and the last one by
\begin{eqnarray*}
\left|   \int_{Q(z)}\left(B_3-\frac{\kappa}{\sqrt2}\right)\chi\psi\,dx' \right|& \le &
\|\psi\|_{L^2(Q_L)}
\left\|\left(B_3-\frac{\kappa}{\sqrt2}\right)\chi\right\|_{L^2(Q(z))}\\
& \le&
\|\psi\|_{L^2(Q_L)} F^{1/2}(z)\,.
\end{eqnarray*}
Collecting terms we obtain
\begin{equation*}
   \frac{\kappa}{\sqrt2} \int_{Q(z)}\chi\, dx' 
 \le \int_{Q(z)} B_3\psi dx'+  \ell F(z)
  +\|\psi\|_{L^2(Q_L)} F^{1/2}(z)\,.
\end{equation*}
Using (\ref{eqlemmatestfunctl2}) and (\ref{eqintchi-2}), we also have
\begin{equation}\label{psil2}
  \|\psi\|_{L^2(Q_L)}\le \|\psi\|_{L^1(Q_L)}^{1/2}\lsim \left(
  \frac{r^2}{\ell^2} \frac{\bext L^2}{\kappa}\right)^{1/2}\,.
\end{equation}
This concludes the proof.
 \end{proof}

\subsubsection{Transport term}\label{secoldtransport}
We now relate the value of $B_3$ over different sections by exploiting the $\int |B'|^2dx$ term in the energy. Since we have $\Div B=0$ we may write 
$$\partial_{3} B_3 + {\Div}' B'=0$$
i.e. $B'$ can be seen as the flux transporting $B_3$.
Since $B_3$ takes, approximately, only the two values 0 and $\kappa/\sqrt2$, up to a
factor we can understand $B'$ as the velocity with which $B_3$ is transported.
Thus  we call $\int |B'|^2dx$ the transport term by analogy with the Benamou-Brenier 
formula for the Wasserstein distance in optimal transport.
\begin{lemma}\label{LemmaMongeKanto}
    Let  $B \in  L^2_\loc(\R^2\times(0,T) ;\R^3) $, $Q_L$-periodic,  such that  $\Div
    B=0$. Then for any $z_1, z_2\in [0,T]$ one has
\begin{equation}\label{eqbprime}
\left|\int_{Q_L} \left(B_3(\cdot, z_1)\, -\, B_3(\cdot,z_2)\right)
 \,  \psi\, dx'\right|
\le  \|\nabla\psi\|_{L^\infty}\, 
 \int_{Q_L \times (z_1,z_2)}  |B'| \, dx
\end{equation}
for any  $\psi\in W^{1,\infty}(\R^2)$, $Q_L$-periodic.  The values of $B_3$ on the sections
 are understood as traces, which exist since $\Div B=0$.
\end{lemma}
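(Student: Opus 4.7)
The plan is to exploit the distributional divergence-free condition by testing against product functions of the form $\varphi(x',x_3) = \psi(x')\theta(x_3)$. Since $B$ and $\psi$ are both $Q_L$-periodic, the relation $\Div B = 0$ may be tested against any such $\varphi$ with $\theta\in C^1_c(\R)$: using a cut-off depending only on $x_1,x_2$ to reduce to compactly supported test functions yields
\begin{equation*}
  \int_{Q_L\times\R} B'\cdot\nabla'\psi(x')\,\theta(x_3)\,dx
  + \int_{Q_L\times\R} B_3(x)\,\psi(x')\,\theta'(x_3)\,dx = 0.
\end{equation*}

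Second, I would choose $\theta$ to approximate the indicator $\uno_{(z_1,z_2)}$. Concretely, for small $\e>0$ let $\theta_\e\in C^1_c(\R)$ be a standard smoothing of $\uno_{(z_1,z_2)}$ obtained by convolving with a mollifier of scale $\e$, so that $\theta_\e'$ concentrates on two $\e$-neighborhoods of $z_1$ and $z_2$ with integrals $+1$ and $-1$. Passing to the limit $\e\to 0$ in the second integral produces exactly the trace difference $\int_{Q_L}[B_3(\cdot,z_1)-B_3(\cdot,z_2)]\psi\,dx'$; these traces are well defined because $\Div B=0$ gives $\partial_3 B_3 = -\Div' B'\in H^{-1}_\loc$, so $B_3$ has normal traces on horizontal sections (this is the standard argument justifying Lemma~\ref{lemmaHmeno12} as well). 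In the first integral $\theta_\e\to \uno_{(z_1,z_2)}$ in $L^1$, and since $B'\in L^2_\loc$ the limit is $\int_{Q_L\times(z_1,z_2)}B'\cdot\nabla'\psi\,dx$.

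Third, combining the two limits yields the identity
\begin{equation*}
  \int_{Q_L}\bigl(B_3(\cdot,z_1)-B_3(\cdot,z_2)\bigr)\psi\,dx'
  = \int_{Q_L\times(z_1,z_2)} B'\cdot\nabla'\psi\,dx,
\end{equation*}
from which the claimed estimate follows by the pointwise bound $|B'\cdot\nabla'\psi|\le\|\nabla\psi\|_{L^\infty}|B'|$ and integration.

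The only subtle step is the trace-taking in the limit $\e\to 0$; this is the main obstacle because $B_3$ need not be continuous in $x_3$. I would handle it by first proving the identity for smooth $B$ by a direct integration by parts, and then approximating a general $B$ by its mollification in $x_3$ alone (say $B\ast_{x_3}\eta_\delta$), observing that both sides of the identity are continuous under such approximation since the right-hand side involves only $L^2_\loc$ data and the left-hand side is exactly the trace defined through the divergence-free structure. Alternatively, one may note that if $\psi$ is smooth and periodic the quantity $z\mapsto \int_{Q_L}B_3(\cdot,z)\psi\,dx'$ is absolutely continuous on $[0,T]$ with derivative $\int_{Q_L}B'(\cdot,z)\cdot\nabla'\psi\,dx'$ (by the distributional identity above applied to $\theta\in C^1_c$), and then integrating from $z_1$ to $z_2$ gives the result for smooth $\psi$; a density argument in $W^{1,\infty}$ extends it to the stated generality.
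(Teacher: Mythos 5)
Your proof is correct and follows essentially the same route as the paper: testing $\Div B = 0$ against $\psi(x')\theta(x_3)$ and sending $\theta_\e \to \uno_{(z_1,z_2)}$ is precisely the rigorous underpinning of the paper's terser integration-by-parts computation, and both rely on the same trace interpretation coming from $\Div B=0$. One harmless slip: your intermediate identity should read $\int_{Q_L}(B_3(\cdot,z_1)-B_3(\cdot,z_2))\psi\,dx' = -\int_{Q_L\times(z_1,z_2)} B'\cdot\nabla'\psi\,dx$ (since the $+1$ weight of $\theta_\e'$ at $z_1$ ends up on the same side as the $B'$ term), but the sign is immaterial once absolute values are taken.
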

\begin{proof} This is the same as \cite[Lemma 2.2]{ChoksiContiKohnOtto2008},
for completeness we give here the short argument.
 We can assume without loss of generality that $z_1<z_2$.
  We compute, using $\Div B=0$ and  the $Q_L$-periodicity of $B$ and
  $\psi$, 
  \begin{alignat*}1
     \int_{Q_L} [B_3(z_2)-B_3(z_1)]\psi \,dx'
=&\int_{Q_L\times(z_1,z_2)}
\frac{\partial B_3}{\partial x_3} \psi\,dx
=-\int_{Q_L\times(z_1,z_2)}
\nabla'\cdot B' \psi\,dx\\ 
=&\int_{Q_L\times(z_1,z_2)}
 B'\cdot\nabla\psi\,dx
\le \|\nabla\psi\|_{L^\infty(Q_L)}\int_{Q_{L}\times(z_1,z_2)} |B'|\,dx\,.
  \end{alignat*}
This, together with the same estimate with $-\psi$ in place of $\psi$, concludes the proof.
\end{proof}
Since $B'$ is nonzero only in a small part of the volume, the embedding of $L^1$ into $L^2$ gives
an additional factor proportional to $\bext$.
\begin{lemma}\label{lemmatransportsharp}
Let $(\chi,B)$ be an admissible pair which fulfills (\ref{eqFzallgood}), 
$z\in (0,T)$ such that (\ref{eqFzzgood}) holds. Then the function
$\psi$ from  Lemma 
\ref{lemmaoldinterior} fulfills
  \begin{equation*}
 \left| \int_{Q(z)} B_3\psi \, dx'-
  \int_{Q(0)} B_3\psi\, dx'\right|\lsim\frac1r
  \left( \frac{\bext L^2T}{\kappa}\right)^{1/2}
F^{1/2}\,.    
  \end{equation*}
\end{lemma}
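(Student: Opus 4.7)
The plan is to apply Lemma \ref{LemmaMongeKanto} directly with $z_1=0$, $z_2=z$, and the very test function $\psi$ supplied by Lemma \ref{lemmaoldinterior}; the task then reduces to bounding the two factors that appear on the right-hand side of \eqref{eqbprime}.

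For the gradient factor: since $\chi\in\{0,1\}$ we have $\sup\chi=1$, so conclusion (iv) of Lemma \ref{lemmatestfunction} yields $\|\nabla\psi\|_{L^\infty}\lsim 1/r$. For the $L^1$ factor, the admissibility relation $B(1-\chi)=0$ (the \emph{a.e.} Meissner condition from Definition \ref{defadmisssharp}) forces $B'$ to be supported in $\{\chi=1\}$. Using $\chi^2=\chi$ and Cauchy--Schwarz,
\begin{equation*}
\int_{Q_{L,T}}|B'|\,dx=\int_{Q_{L,T}}|B'|\,\chi\,dx
\le \left(\int_{Q_{L,T}}\chi\,dx\right)^{1/2}\left(\int_{Q_{L,T}}|B'|^2\,dx\right)^{1/2}
\lsim \left(\frac{\bext L^2T}{\kappa}\right)^{1/2}F^{1/2},
\end{equation*}
where the last step uses Lemma \ref{lemmachiloc} (which is applicable thanks to hypotheses \eqref{eqFzallgood} and \eqref{eqFzzgood}) to control $\int\chi$, and the obvious inequality $\int|B'|^2\,dx\le F$ from the definition \eqref{eqdefF}. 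Multiplying the two bounds and feeding the product into Lemma \ref{LemmaMongeKanto} produces precisely the claimed estimate.

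The one place where genuine thought is needed is the decision to estimate $B'$ in $L^1$ rather than in $L^2$. A naive application of Cauchy--Schwarz inside Lemma \ref{LemmaMongeKanto} would replace $(\bext L^2T/\kappa)^{1/2}$ by $(L^2T)^{1/2}$ and lose the small factor $(\bext/\kappa)^{1/2}$; exploiting the Meissner relation $B(1-\chi)=0$ to trade an $L^2$ estimate for an $L^1\cap L^2$ interpolation restores this factor, and it is exactly this gain that makes the transport term comparable with the interior term when the three effects (interior, transport, exterior) are subsequently balanced to obtain the branching scaling law.
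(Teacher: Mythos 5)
Your proof is correct and follows essentially the same route as the paper: apply Lemma \ref{LemmaMongeKanto}, bound $\|\nabla\psi\|_{L^\infty}\lsim 1/r$ via Lemma \ref{lemmatestfunction}(iv), and estimate $\int_{Q_{L,T}}|B'|\,dx$ by writing $|B'|=\chi|B'|$, applying Cauchy--Schwarz, and invoking \eqref{eqintchi1-2} together with $\int|B'|^2\le F$. The closing remark about why the $L^1$ estimate (rather than a crude $L^2$ one) is needed is accurate and matches the underlying logic of the paper.
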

\begin{proof}
 By Lemma \ref{LemmaMongeKanto} we have
 \begin{eqnarray*}
    \left| \int_{Q(z)} B_3\psi\,dx' -
  \int_{Q(0)} B_3\psi\,dx'\right| &\le&
\|\nabla\psi\|_{L^\infty} \int_{Q_{L,T}} |B'|\,dx\,.
 \end{eqnarray*}
We estimate
\begin{eqnarray*}\int_{Q_{L,T}}  |B'|\,dx=
\int_{Q_{L,T}} \chi |B'|\,dx
&\le&   \left( \int_{Q_{L,T}} \chi\,dx\right)^{1/2}
\left( \int_{Q_{L,T}} |B'|^2\,dx\right)^{1/2}\\
&\lsim&  \left( \frac{\bext L^2T}{\kappa}\right)^{1/2}
F^{1/2}\,.
 \end{eqnarray*}
To conclude the proof it suffices to recall that $\|\nabla\psi\|_{L^\infty}\lsim1/r$.
\end{proof}

\subsubsection{Exterior term}
Finally, we show that the energy outside the sample penalizes configurations
with a magnetic field that oscillates strongly at the boundary, as measured by the $H^{-1/2}$ norm.
\begin{lemma}\label{lemmaextsharp}
For all admissible $(\chi,B)$, and any $\psi\in H^{1/2}_\loc(\R^2)$, $Q_L$-periodic, 
it holds that
  \begin{equation*}
   \left| \int_{Q(0)} (B_3-\bext)\psi \,dx' \right|\lsim   \|\psi\|_{H^{1/2}(Q_L)}F^{1/2}   \,.
  \end{equation*}
If (\ref{eqFzzgood}) holds for some  $z\in(0,T)$ and
 $r\ge \ell>0$ and  $\psi$ are as in Lemma \ref{lemmaoldinterior}, then
\begin{equation*}  
  \left|  \int_{Q(0)} (B_3-\bext)\psi \,dx'\right|\lsim  \left(\frac{r \bext L^2}{\ell^2\kappa}\right)^{1/2}F^{1/2}  \,.
  \end{equation*} 
\end{lemma}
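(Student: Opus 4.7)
The strategy is to represent the boundary integral as a bulk integral of $(B-\bext e_3)\cdot\nabla\tilde\psi$ over $Q_L\times(-\infty,0)$, where $\tilde\psi$ is any periodic extension of $\psi$. This works because the exterior term in $F$ controls $\|B-\bext e_3\|_{L^2(Q_L\times(-\infty,0))}$, and the divergence-free condition allows one to convert a trace-scale pairing into a bulk $L^2$ pairing. Concretely, given $\psi\in H^{1/2}(Q_L)$, pick a $Q_L$-periodic extension $\tilde\psi\in H^1_\loc(Q_L\times(-\infty,0))$ with $\nabla\tilde\psi\in L^2$ and $\|\nabla\tilde\psi\|_{L^2}\le\|\psi\|_{H^{1/2}(Q_L)}+\eta$. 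Because $\Div(B-\bext e_3)=0$ distributionally on $\R^3$, testing against $\tilde\psi\,\theta_M(x_3)$ (with $\theta_M$ a smooth cutoff equal to $1$ on $[-M,0]$ and vanishing at $-2M$) and letting $M\to\infty$ yields the key identity
\begin{equation*}
  \int_{Q(0)} (B_3-\bext)\psi\,dx' \,=\, \int_{Q_L\times(-\infty,0)} (B-\bext e_3)\cdot\nabla\tilde\psi\,dx\,.
\end{equation*}
The vanishing of the $\theta_M'$ contribution follows from $B-\bext e_3\in L^2$ together with the fact that $\tilde\psi$ can be chosen to have controlled growth (e.g.\ by subtracting its horizontal mean, which does not change the pairing since $\int_{Q(z)}(B_3-\bext)dx'=0$ by Lemma \ref{lemmaHmeno12}). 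Cauchy--Schwarz combined with $\|B-\bext e_3\|_{L^2(Q_L\times(-\infty,0))}^2\le F$ and the infimum over $\tilde\psi$ then yields the first estimate.

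For the second bound, I plan to specialize the first estimate to the function $\psi$ produced by Lemma \ref{lemmatestfunction} and control its $H^{1/2}$ norm by an interpolation inequality. Using the concrete extension $\tilde\psi(x',x_3):=\psi(x')\eta(x_3/\delta)$ for a fixed cutoff $\eta\in C^1_c((-1,0])$ with $\eta(0)=1$, one computes $\|\nabla\tilde\psi\|_{L^2}^2\lsim \delta\|\nabla\psi\|_{L^2(Q_L)}^2 + \delta^{-1}\|\psi\|_{L^2(Q_L)}^2$, and optimizing in $\delta$ gives the standard interpolation
\begin{equation*}
  \|\psi\|_{H^{1/2}(Q_L)}^2 \,\lsim\, \|\psi\|_{L^2(Q_L)}\,\|\nabla\psi\|_{L^2(Q_L)}\,.
\end{equation*}

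It remains to plug in the quantitative estimates from (\ref{eqlemmatestfunctl2}) for the test function, together with $\|\chi(\cdot,z)\|_{L^\infty}\le 1$ and $\|\chi(\cdot,z)\|_{L^1(Q_L)}\lsim\bext L^2/\kappa$ from (\ref{eqintchi-2}). This produces $\|\psi\|_{L^2}^2\lsim (r^2/\ell^2)\bext L^2/\kappa$ and $\|\nabla\psi\|_{L^2}^2\lsim (1/\ell^2)\bext L^2/\kappa$, whose product is $(r/\ell^2)^2(\bext L^2/\kappa)^2$. Taking the square root and inserting into the first part of the lemma yields the claimed $(r\bext L^2/\ell^2\kappa)^{1/2}F^{1/2}$. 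The main technical obstacle is the rigorous integration by parts on the semi-infinite periodic slab, which I expect to handle via the $\theta_M$-truncation argument sketched above; everything else is a direct application of the tools already developed in this section.
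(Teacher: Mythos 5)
Your proposal is correct and follows essentially the same route as the paper: integrate by parts against an extension $\tilde\psi$ on the half-space using $\Div(B-\bext e_3)=0$, bound the resulting bulk integral by Cauchy--Schwarz with $\|B-\bext e_3\|_{L^2(Q_L\times(-\infty,0))}^2\le F$, then control $\|\psi\|_{H^{1/2}}$ by the interpolation $\|\psi\|_{H^{1/2}}^2\lesssim\|\psi\|_{L^2}\|\nabla\psi\|_{L^2}$ and plug in \eqref{eqlemmatestfunctl2} together with \eqref{eqintchi-2}. The only differences are that you spell out technical details the paper takes for granted: the $\theta_M$-truncation to justify the integration by parts on the unbounded slab, and an explicit extension $\psi(x')\eta(x_3/\delta)$ with an optimization over $\delta$ to derive the interpolation inequality (the paper simply invokes it). These are sound and your final numerology matches the claimed $(r\bext L^2/\ell^2\kappa)^{1/2}F^{1/2}$.
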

\begin{proof}
Let $\hat\psi\in H^1_\loc(\R^3)$ be $Q_L$ periodic, such that
$\hat\psi(\cdot, 0)=\psi$ in the sense of traces and
$\|\psi\|_{H^{1/2}(Q_L)}=\|\nabla \hat\psi\|_{L^2(Q_L\times(-\infty,0))}$
(we recall that we are using the homogeneous $H^{1/2}$ norm).
Since $\Div B=0$ on $(-\infty,0)\times Q_L$, using periodicity we obtain
  \begin{equation*}
  \int_{Q_L\times(-\infty,0)} (B-\Bext)\cdot \nabla\hat\psi\, dx = 
  \int_{Q(0)} (B-\Bext) \hat\psi \cdot e_3 \,dx'\,.
  \end{equation*}
  Therefore
  \begin{equation*}
  \left|  \int_{Q(0)} (B_3-\bext)\psi\, dx'\right|\le 
    \|B-\Bext\|_{L^2(Q_L\times(-\infty,0))}
    \|\nabla \hat\psi\|_{L^2(Q_L\times(-\infty,0))}\le
    F^{1/2} \|\psi\|_{H^{1/2}(Q_L)}\,.
  \end{equation*}
  This proves the first assertion. It remains to estimate $\|\psi\|_{H^{1/2}(Q_L)}$.
  By interpolation and (\ref{eqlemmatestfunctl2}) we have
  \begin{alignat*}1
    \|\psi\|_{H^{1/2}(Q_L)}&\lsim
    \|\psi\|_{L^2(Q_L)}^{1/2}
    \|\nabla\psi\|_{L^2(Q_L)}^{1/2}
    \lsim
    \left(\frac{r^2}{\ell^4}
    \|\chi\|_{L^1(Q_L)}^2\right)^{1/4}\,.
     \end{alignat*}
     Recalling  \eqref{psil2}  we conclude
  \begin{alignat*}1
    \|\psi\|_{H^{1/2}(Q_L)}&\lsim
\left(\frac{r \bext L^2}{\ell^2\kappa}\right)^{1/2}\,.
  \end{alignat*}
  This concludes the proof.
\end{proof}

\subsubsection{Derivation of the lower bound}
From what precedes, we deduce the lower bound result for the sharp-interface functional.
\begin{theorem}\label{theochilower1}
For any admissible pair $(\chi,B)$ (in the sense of Definition \ref{defadmisssharp}) and for all
$\bext,\kappa,L,T>0$ such that
\begin{equation*}
  8\bext\le \kappa \le \frac12\hskip1cm\text{ and }\hskip1cm
  \kappa T\ge 1\,,
\end{equation*}
one has
\begin{equation*}
 F[\chi,B]\gsim \min \left\{\bext \kappa^{3/7} T^{3/7} L^2, 
\bext^{2/3} \kappa^{2/3}  T^{1/3}L^2\right\} \,.
\end{equation*}
\end{theorem}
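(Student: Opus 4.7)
The plan is to fix a good section $z\in(0,T)$, chain the three localization estimates for a suitable test function $\psi$, and maximize the resulting lower bound on $F$ over two free length scales $\ell\le r$. We may assume \eqref{eqFzallgood}, since otherwise $F\gsim \kappa\bext L^2 T$, which under $\kappa T\ge 1$ already dominates both terms in the claimed minimum. Since $\int_0^T F(z)\,dz\le F$, by Chebyshev a positive-measure set of $z\in(0,T)$ satisfies $F(z)\le 2F/T\le \kappa\bext L^2/4$, hence also \eqref{eqFzzgood}; fix any such $z$.

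Given $0<\ell\le r$, let $\psi$ be produced by Lemma~\ref{lemmatestfunction} from $\chi(\cdot,z)$. Chaining Lemma~\ref{lemmaoldinterior} at $z$, Lemma~\ref{lemmatransportsharp} between $z$ and $0$, and Lemma~\ref{lemmaextsharp} at $0$, then using \eqref{eqintchi-2} to bound the left-hand side of Lemma~\ref{lemmaoldinterior} from below by a constant multiple of $\bext L^2$, one obtains
\begin{equation*}
\bext L^2 \lsim \bext\!\int_{Q_L}\!\psi\,dx' + \ell F(z) + \Bigl(\tfrac{r^2\bext L^2}{\ell^2\kappa}\Bigr)^{1/2}\!F^{1/2}(z) + \tfrac{1}{r}\Bigl(\tfrac{\bext L^2 T}{\kappa}\Bigr)^{1/2}\!F^{1/2} + \Bigl(\tfrac{r\bext L^2}{\ell^2\kappa}\Bigr)^{1/2}\!F^{1/2}.
\end{equation*}
Lemma~\ref{lemmatestfunction}(iii) and \eqref{eqintchi-2} yield $\bext\int_{Q_L}\psi\lsim (r/\ell)^2\bext^2 L^2/\kappa$, so restricting to the window $r^2/\ell^2\le c\kappa/\bext$ (with $c$ small enough) absorbs this first term into the left-hand side; the assumption $\bext\le\kappa/8$ ensures that the admissible range $\ell\le r\le \ell(c\kappa/\bext)^{1/2}$ is nonempty. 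Substituting $F(z)\le 2F/T$ and solving each of the four remaining summands separately for $F$,
\begin{equation*}
F \gsim \bext L^2 \cdot \min\Bigl\{\tfrac{T}{\ell},\ \tfrac{\ell^2\kappa T}{r^2},\ \tfrac{r^2\kappa}{T},\ \tfrac{\ell^2\kappa}{r}\Bigr\}.
\end{equation*}

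It remains to maximize this minimum over feasible $(\ell,r)$. The unconstrained optimum equalizes three of the four summands at $\ell=T^{4/7}\kappa^{-3/7}$, $r=T^{5/7}\kappa^{-2/7}$, at the common value $\kappa^{3/7}T^{3/7}$; the fourth summand is larger by a factor of $(\kappa T)^{2/7}\ge 1$, so this yields $F\gsim \bext\kappa^{3/7}T^{3/7}L^2$. This configuration is feasible precisely when $r^2/\ell^2=(\kappa T)^{2/7}\le c\kappa/\bext$, i.e.\ in the regime $\bext\lsim \kappa^{5/7}/T^{2/7}$. In the complementary regime the side constraint is active: setting $r^2/\ell^2=c\kappa/\bext$ turns the second summand into the $\ell$-independent constant $\bext T/c$, and balancing the first against the third at $\ell=(T^2\bext/\kappa^2)^{1/3}$ produces $F\gsim \bext^{2/3}\kappa^{2/3}T^{1/3}L^2$; a short check using $\kappa T\ge 1$ shows the remaining two summands exceed this value. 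Combining the two regime bounds yields the desired $\min$.

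The main technical obstacle is the four-term min-max optimization with the side constraint $r^2/\ell^2\le c\kappa/\bext$: one must verify that the unconstrained optimum saturates the constraint exactly at the regime boundary $\bext\sim \kappa^{5/7}/T^{2/7}$, so that the two cases dovetail and the fractional exponents $3/7$ and $2/3$ emerge cleanly from three summands balancing in one case and a different three balancing in the other.
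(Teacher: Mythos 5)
Your proposal reproduces the paper's proof essentially step for step: the same test function $\psi$ from Lemma~\ref{lemmatestfunction}, the same chaining of the interior (Lemma~\ref{lemmaoldinterior}), transport (Lemma~\ref{lemmatransportsharp}), and exterior (Lemma~\ref{lemmaextsharp}) estimates, the same absorption under the constraint $r^2/\ell^2\lsim\kappa/\bext$, and the same two-regime optimization over $(\ell,r)$. Two small imprecisions worth noting: (a) your Chebyshev choice gives only $F(z)\le 2F/T\le \kappa\bext L^2/4$, which does not literally satisfy the hypothesis \eqref{eqFzzgood} (namely $F(z)\le\kappa\bext L^2/8$) — this is harmless because the constants in Lemma~\ref{lemmachiloc} tolerate the factor of 2, but as written the implication is off; the paper avoids this by taking a mean-value choice $F(z)\le F/T$. (b) In the case $F\ge\kappa\bext L^2T/8$ you claim $\kappa\bext T L^2$ dominates \emph{both} terms of the minimum; it need only dominate (and in general only does dominate) the first term $\bext\kappa^{3/7}T^{3/7}L^2$, which suffices. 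Neither affects the correctness of the argument.
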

\begin{proof}
Let $F=F[\chi,B]$.
If $F\ge \bext\kappa  TL^2/8$ then, since $\kappa T\ge 1$,
we obtain  $F\ge  \bext(\kappa  T)^{3/7}L^2/8$ and the proof is concluded. We can therefore
assume that  (\ref{eqFzallgood}) holds.
By a mean-value argument, we may choose $z\in(0,T)$ such that
$F(z)\le F/T$, so that (\ref{eqFzzgood}) holds as well.
We start by constructing $\psi$ as in Lemma \ref{lemmaoldinterior}, namely,
with Lemma \ref{lemmatestfunction} applied to $\chi(\cdot, z)$ for some  $0< \ell\le r$ chosen below.
One key observation is that, since $\psi$ depends only on $x'$,
\begin{align*} \int_{Q(z)} B_3\psi \,dx'= &\int_{Q_L}\bext\psi  \,dx'+ \left[\int_{Q(z)}B_3\psi\,dx'
  -\int_{Q(0)}B_3\psi\,dx'\right]
  + \int_{Q(0)} (B_3-\bext)\psi\,dx'\,.
\end{align*}
With Lemma \ref{lemmatransportsharp}  and Lemma \ref{lemmaextsharp} this can be tranformed into
\begin{align}\nonumber \int_{Q(z)} B_3\psi \,dx'\le  &\int_{Q_L}\bext\psi  \,dx'
 + c\frac1r \left( \frac{\bext L^2T}{\kappa}\right)^{1/2} F^{1/2}
+  c  \left(\frac{r \bext L^2}{\ell^2\kappa}\right)^{1/2} F^{1/2}    
 \,,
\end{align}
Lemma \ref{lemmatestfunction}(iii) gives
\begin{equation*}
    \int_{Q_L}\bext\psi \,dx'\le \bext \|\psi\|_{L^1(Q_L)}\le 4\frac{r^2}{\ell^2} \bext \int_{Q(z)} \chi \, dx'\,.
\end{equation*}
Recalling Lemma \ref{lemmaoldinterior},
\begin{equation*}
    \frac{\kappa}{\sqrt2}\int_{Q(z)} \chi\, dx' 
    \le \int_{Q(z)} B_3\psi\, dx'+
  c \ell F(z) + c \left(
  \frac{r^2}{\ell^2} \frac{\bext L^2}{\kappa}\right)^{1/2} F^{1/2}(z)\,.
\end{equation*}
Combining the last three estimates gives
\begin{align*}
 \frac{\kappa}{\sqrt2}\int_{Q(z)} \chi\, dx'\le &
 4\frac{r^2}{\ell^2} \bext \int_{Q(z)} \chi \, dx'
 + c\frac1r \left( \frac{\bext L^2T}{\kappa}\right)^{1/2} F^{1/2}
+c  \left(\frac{r \bext L^2}{\ell^2\kappa}\right)^{1/2} F^{1/2}   
 \\
&  + c \ell F(z) +c \left(
  \frac{r^2}{\ell^2} \frac{\bext L^2}{\kappa}\right)^{1/2} F^{1/2}(z)\,.
\end{align*}
Assume now that 
\begin{equation}\label{eqdefadmissrl}
  0<\ell\le r\le \left(\frac{\kappa}{8\bext}\right)^{1/2} \ell\,,
\end{equation}
so that the coefficient of $\int_{Q(z)} \chi \, dx'$ in the right-hand side is smaller than the one on the left-hand side.
This is possible, since we assumed $\bext\le \kappa/8$.
Then, recalling (\ref{eqintchi-2}) and $F(z)\le F/T$,
\begin{align}\nonumber 
\bext L^2\lsim& 
 \frac1r  \left( \frac{\bext L^2T}{\kappa}\right)^{1/2} F^{1/2} 
+\left(\frac{r \bext L^2}{\ell^2\kappa}\right)^{1/2} F^{1/2} 
+  \ell \frac FT 
+ \left(  \frac{r^2}{\ell^2} \frac{\bext L^2}{\kappa}\right)^{1/2} \frac{F^{1/2}}{T^{1/2}}\,.
\end{align}
At least one of the terms in the right-hand side has to be at least one-quarter of the 
one on the left, and therefore for all pairs $(r,\ell)$
which obey 
(\ref{eqdefadmissrl}), we have
\begin{equation*}
  F\gsim \min\left\{
  \bext L^2 \frac{r^2 \kappa}{T},
  \bext L^2 \frac{\ell^2\kappa}{r}, 
  \bext L^2\frac{T}{\ell},
  \bext L^2\frac{\ell^2\kappa T}{r^2}
  \right\}.
\end{equation*}
Equivalently, 
\begin{equation}\label{eqfinalboundF}
  F\gsim \kappa \bext L^2T \min\left\{
  \frac{r^2 }{T^2},
  \frac{\ell^2}{rT},
  \frac{1}{\kappa\ell}, 
  \frac{\ell^2}{r^2} 
  \right\}.
\end{equation}
We finally have to choose $r$ and $\ell$, and check that in each case
some terms give the optimal bound, and the others are
irrelevant. 
Balancing the first three terms we obtain 
\begin{equation*}
  \ell=T^{4/7}\kappa^{-3/7}\,, \hskip5mm r=T^{5/7}\kappa^{-2/7}\,.
\end{equation*}
This choice is admissible only if  (\ref{eqdefadmissrl}) is satisfied, which since $\kappa T\ge1$ is equivalent to
 $(\kappa T)^{1/7}\le  (\kappa/8\bext)^{1/2}$. In this case,
(\ref{eqfinalboundF}) becomes
\begin{equation*}
  F\gsim
 \kappa^{3/7}\bext L^2T^{3/7} 
 \min\left\{
 1,1,1, (\kappa T)^{2/7}\right\}
\end{equation*}
and since $\kappa T\ge 1$ the assertion holds.

If instead $(\kappa T)^{1/7}\ge  (\kappa/8\bext)^{1/2}$, we choose
$r=\ell (\kappa/8\bext)^{1/2}$. Inserting this 
into (\ref{eqfinalboundF}) and then balancing the first and third term yields, after some rearrangement,
\begin{eqnarray*}
  F&\gsim&
 \kappa^{2/3}\bext^{2/3}L^2T^{1/3} \min\left\{
1,
(\kappa T)^{1/3} \left(\frac{\bext}{\kappa}\right)^{7/6},
1,
(\kappa T)^{2/3} \left(\frac{\bext}{\kappa}\right)^{4/3}
\right\}\,.
\end{eqnarray*}
We observe that since $\kappa T\ge1$ and 
 $(\kappa T)^{1/7}\ge(\kappa/8\bext)^{1/2}$ all terms of the form
 \begin{equation*}
   (\kappa T)^{\alpha} \left(\frac{\bext}{\kappa}\right)^{\beta}
 \end{equation*}
with $\alpha\ge\frac27\beta$ are bounded from below. This concludes
the proof.
  \end{proof}

\subsection{Lower bound for the Ginzburg-Landau functional}\label{lowerboundfull}
The proof is structured in a similar way as the one for the sharp-interface functional, but contains several
additional difficulties. In particular, when working with diffuse interfaces we can enforce the Meissner condition only in a weak sense, see 
Section \ref{secmeissneravg}. This generates difficulties both in the interior estimate and in the
transport term. Additionally, the energy does not directly control the size of the boundary
of the normal phase, and a suitable estimate needs to be formulated and proven (Section 
\ref{secfullsurface}). The only term which can be treated in the same way is the one
corresponding to the external field.

We consider a pair $(u,A)\in H^1_{\mathrm{per}}$ with $\EGL[u,A]$ finite, hence $E[u,A]$ finite. 
By Lemma \ref{lemmarhominoredi1} below, we can assume that $\rho\le 1$ pointwise.
For any $z\in(0,T)$, we  denote by
$E(z)$ the energy contained in the section
 $Q(z):=Q_L\times\{z\}$, 
\begin{alignat*}1
  E(z):=&
\int_{Q(z)}\left[ (1-\kappa \sqrt2) |\nabla_A' u |^2 + \kappa\sqrt2
  |\calD_A^3u|^2 + \left(B_3 - \frac{\kappa}{\sqrt2} (1-\rho)\right)^2 \right] dx'\\
& +  \int_{Q(z)}\left[ |B'|^2+|\partial_3 u -
  i A_3 u|^2  \right] dx'\,.
\end{alignat*}
We write for brevity $E=E[u,A]$. 
We recall that $\rho=|u|^2$ and define
\begin{equation}\label{eqdefchi}
  \chi := (1-\rho)^2\,.
\end{equation}

\subsubsection{Normalization of the density}
We first show that we can assume without loss of generality that $|u|\le 1$, or, equivalently, $\rho\le 1$.
\begin{lemma}\label{lemmarhominoredi1}
  Let  $(u,A)$ be an admissible pair. Then,
\begin{equation*}
 |\nabla(\rho^{1/2})|\le|\nabla_Au|\,,\hskip5mm 
|\calD_A'u|\le\sqrt2|\nabla_Au|\,.
  \end{equation*}
Let $\tilde u:\R^2\times(0,T)\to\C$ be defined by
\begin{equation}
  \tilde u(x) := 
  \begin{cases}
    u(x) & \text{ if }|u|(x)\le 1\\
    \displaystyle\frac{u}{|u|}(x) & \text{ if }|u|(x)> 1\,.
  \end{cases}
\end{equation}
Then $(\tilde u, A)$ is in $H^1_{\mathrm{per}}$ and $E[\tilde u,A]\le E[u,A]$.
\end{lemma}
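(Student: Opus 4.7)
The statement has two parts: the pointwise inequalities, and the truncation estimate. For the pointwise inequalities I would proceed as follows. The first, $|\nabla(\rho^{1/2})|\le|\nabla_A u|$, is the diamagnetic inequality: I would observe that for real-valued $A$ one has $\Re(-i\bar u A u)=0$, hence $\Re(\bar u\,\nabla_A u)=\Re(\bar u\,\nabla u)=\tfrac12\nabla|u|^2=|u|\nabla|u|$, and the conclusion follows by Cauchy--Schwarz on the set $\{|u|>0\}$ (extending by $\nabla|u|=0$ on $\{|u|=0\}$). The bound $|\calD_A^k u|\le\sqrt2|\nabla_A u|$ follows directly from the definition \eqref{DA} and the elementary estimate $|a-ib|^2\le 2(|a|^2+|b|^2)$.

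For the truncation, the strategy is to reduce the claim $E[\tilde u,A]\le E[u,A]$ to the analogous claim for $\EGL$, where the comparison is classical. By Lemma \ref{lemmaseparatebulk} the bulk term $(\kappa\sqrt2\bext-\bext^2)L^2T$ depends only on the parameters, so it cancels in the difference and it suffices to show $\EGL[\tilde u,A]\le\EGL[u,A]$. The exterior magnetic term in $\EGL$ depends only on $A$, which is unchanged. For the Ginzburg--Landau potential, $|\tilde u|^2=\min(|u|^2,1)$ gives $(1-|\tilde u|^2)^2=\max(1-|u|^2,0)^2\le(1-|u|^2)^2$ pointwise. The only nontrivial term is the covariant gradient: on $\{|u|\le1\}$ we have $\tilde u=u$ and there is equality, while on $\{|u|>1\}$, using $\tilde u=u/|u|$ and the identity $|u|\nabla|u|=\Re(\bar u\,\nabla_A u)$ from the first part, a direct computation yields
\begin{equation*}
|\nabla_A\tilde u|^2=\frac{|\nabla_A u|^2-|\nabla|u||^2}{|u|^2}\,,
\end{equation*}
which on $\{|u|>1\}$ is bounded by $|\nabla_A u|^2-|\nabla|u||^2\le|\nabla_A u|^2$ since the numerator is nonnegative by the diamagnetic inequality.

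It remains to check that $(\tilde u,A)$ is admissible. The map $z\mapsto z/\max(|z|,1)$ is $1$-Lipschitz on $\C$, so composing with $u\in H^1_\loc$ gives $\tilde u\in H^1_\loc$. Gauge-periodicity transfers to $\tilde u$ because this truncation commutes with multiplication by unit-modulus complex numbers: if $(u(\cdot+L\vec k),A(\cdot+L\vec k))=(ue^{i\Phi},A+\nabla\Phi)$ for some $\Phi$, then $(\tilde u(\cdot+L\vec k),A(\cdot+L\vec k))=(\tilde ue^{i\Phi},A+\nabla\Phi)$ with the same $\Phi$. Finiteness of $\EGL[\tilde u,A]$ follows from the pointwise comparison just established.

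\textbf{Main obstacle.} The potentially tricky point is that one cannot establish $E[\tilde u,A]\le E[u,A]$ by a termwise pointwise comparison within the formula \eqref{eqdefE}: for instance, the term $(B_3-\tfrac{\kappa}{\sqrt2}(1-\rho))^2$ is not monotone under replacing $\rho$ by $\tilde\rho=\min(\rho,1)$ because the sign of $B_3$ is uncontrolled. The algebraic identity of Lemma \ref{lemmaseparatebulk} bypasses this obstacle by recombining the various $u$-dependent terms of $E$ into the standard Ginzburg--Landau form, where the pointwise monotonicity is clean.
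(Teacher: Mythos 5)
Your proof is correct and follows essentially the same route as the paper: reduce the claim $E[\tilde u,A]\le E[u,A]$ to $\EGL[\tilde u,A]\le\EGL[u,A]$ via the algebraic identity of Lemma \ref{lemmaseparatebulk}, then compare termwise, with the covariant-gradient term controlled by the diamagnetic/polar-decomposition identity $|\nabla_A u|^2=|\nabla|u||^2+|u|^2|\nabla\theta-A|^2$. The only stylistic difference is that you state the diamagnetic inequality via $\Re(\bar u\,\nabla_A u)=|u|\nabla|u|$ and Cauchy--Schwarz, which sidesteps the density and local-smoothness caveat the paper invokes before writing $u=\rho^{1/2}e^{i\theta}$; you also spell out admissibility (Lipschitz truncation preserving $H^1_\loc$ and gauge-periodicity), which the paper leaves implicit. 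These are minor refinements of the same argument, not a different approach.
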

\begin{proof}By density it suffices to prove all assertions under the additional assumption that $u\in C^1$, at points where $\rho\ne0$.
Writing locally  $u=\rho^{1/2} e^{i\theta}$ one obtains
\begin{equation*}
    \nabla_Au=e^{i\theta}\left[ \nabla\rho^{1/2} + i \rho^{1/2} (\nabla\theta -A)\right]\,,
  \end{equation*}
hence locally
\begin{equation}\label{naba}
  |\nabla_Au|^2=|\nabla\rho^{1/2}|^2+\rho|\nabla\theta-A|^2\,.
\end{equation} 
The second assertion follows immediately from the definition of $\calD_A$ in (\ref{DA}).
It also  follows directly from \eqref{naba} that  $|\nabla_A \tilde u|\le |\nabla_A u|$, and hence $\EGL[\tilde u,A]\le \EGL[u,A]$. Since $E$ and $\EGL$
differ by a constant, we conclude  that $E[\tilde u,A]\le E[u,A]$.
\end{proof}
\subsubsection{Meissner effect ``on average''}\label{secmeissneravg}
In the reduced model we had the compatibility condition
$B(1-\chi)=0$. In the true model, we  expect as in \eqref{apprxmeissner} that $B\rho$ is, in some
sense, small. The next lemmas make this quantitative, in appropriate
weak norms. 
\begin{lemma}\label{lemmarhoB3}
Let $(u,A)$ be admissible, and assume (\ref{eqparameters}) and $\rho\le 1$.
For any $z$, we have
  \begin{equation}\label{eqsectionrhohunemnorhodue}
    \left|\int_{Q(z)} \rho B_3 \,dx'\right| \le 8E(z)
  \end{equation}
and
  \begin{equation}\label{eqsectionrhohunemnorho}
    \left|\int_{Q(z)} \rho B_3 (1-\rho)\,dx'\right| \le 16\, E(z)\,.
  \end{equation}

Further, for any $\varphi\in L^\infty\cap W^{1,2}_\loc(\R^2\times(0,T))$, $Q_L$-periodic and such that
$\varphi=0$ on $Q(0)$ and $Q(T)$ (in the sense of traces), and any $k=1,2,3$, we
have 
\begin{equation}\label{eqhcontrollatutti}
 \left| \int_{Q_{L,T}} \rho B_k\varphi \,dx\right| \lsim E \|\varphi\|_{L^\infty(Q_{L,T})} +
  E^{1/2} \|\nabla\varphi\|_{L^2(Q_{L,T})}\,.
\end{equation}
\end{lemma}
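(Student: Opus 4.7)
All three bounds will come from the algebraic identity of Lemma \ref{lemmaformula}, rewritten as
\begin{equation*}
\rho B_k = |(\nabla_A u)_{k+1}|^2 + |(\nabla_A u)_{k+2}|^2 - |\calD_A^ku|^2 - (\nabla \times j_A)_k,
\end{equation*}
together with two pointwise inequalities. Expanding the definition of $\calD_A^k$ (as in the proof of Lemma \ref{lemmaformula}) gives
\begin{equation*}
|(\nabla_A u)_{k+1}|^2 + |(\nabla_A u)_{k+2}|^2 - |\calD_A^ku|^2 = 2\,\Im\bigl((\nabla_A u)_{k+2}\,\overline{(\nabla_A u)_{k+1}}\bigr),
\end{equation*}
whose absolute value is at most $|(\nabla_A u)_{k+1}|^2 + |(\nabla_A u)_{k+2}|^2 \le |\nabla_A u|^2$ by AM--GM. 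The polar decomposition $u=\rho^{1/2}e^{i\theta}$ used in the proof of Lemma \ref{lemmarhominoredi1} further yields $|\nabla\rho| \le 2\rho^{1/2}|\nabla_A u|$ and $|j_A| \le \rho^{1/2}|\nabla_A u|$. Finally the assumption $\kappa \le 1/2$ forces $1-\kappa\sqrt{2} \ge (2-\sqrt{2})/2$, so $\int_{Q(z)}|\nabla_A'u|^2\, dx' \le (2+\sqrt{2})\,E(z)$ and $\int_{Q_{L,T}}|\nabla_A u|^2\, dx \lsim E$.

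For \eqref{eqsectionrhohunemnorhodue} I would take $k=3$ and integrate the identity over $Q(z)$: the curl term drops out by $Q_L$-periodicity of $j_A'$, and the commutator bound gives $\bigl|\int_{Q(z)}\rho B_3\, dx'\bigr| \le \int_{Q(z)}|\nabla_A'u|^2\, dx' \le (2+\sqrt{2})\,E(z) < 8\,E(z)$. For \eqref{eqsectionrhohunemnorho} I would multiply the identity by $(1-\rho)$ before integrating over $Q(z)$. Integration by parts in the tangential variables (no boundary, by periodicity) converts the curl term into $\int_{Q(z)}\nabla'\rho \times j_A'\, dx'$, which the second pointwise inequality bounds by $2\int_{Q(z)}\rho|\nabla_A'u|^2\, dx' \le 2\int_{Q(z)}|\nabla_A'u|^2\, dx'$. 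Adding the $(1-\rho)$-weighted commutator term gives a total of at most $3(2+\sqrt{2})\,E(z) < 16\,E(z)$.

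For \eqref{eqhcontrollatutti} I would test the identity against $\varphi$ and integrate over $Q_{L,T}$. The commutator contribution is controlled directly by $\|\varphi\|_{L^\infty(Q_{L,T})} \int_{Q_{L,T}}|\nabla_A u|^2\, dx \lsim \|\varphi\|_{L^\infty(Q_{L,T})}\,E$. For the curl term, integration by parts produces $-\int_{Q_{L,T}}(\nabla\varphi\times j_A)_k\, dx$; the lateral boundary terms vanish by $Q_L$-periodicity, and --- essentially for $k \in \{1,2\}$ --- the top and bottom ones on $Q(0)$ and $Q(T)$ vanish by the hypothesis $\varphi|_{Q(0)}=\varphi|_{Q(T)}=0$. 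Cauchy--Schwarz combined with $\|j_A\|_{L^2(Q_{L,T})}^2 \le \int_{Q_{L,T}}\rho|\nabla_A u|^2\, dx \lsim E$ then supplies the $E^{1/2}\|\nabla\varphi\|_{L^2(Q_{L,T})}$ term.

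The main technical point I anticipate is verifying that the integration by parts in the three-dimensional curl--current for $k \in \{1,2\}$ is genuinely boundary-free: this is the one place where the hypothesis that $\varphi$ vanishes on $Q(0)$ and $Q(T)$ enters in an essential way, since for those $k$ the curl of $j_A$ mixes vertical and horizontal derivatives. Tracking the explicit constants $8$ and $16$ in the section-integrated estimates is then elementary provided one does not waste the margin $1-\kappa\sqrt{2}\ge (2-\sqrt{2})/2$.
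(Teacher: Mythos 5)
Your proof is correct and follows essentially the same route as the paper: start from the identity of Lemma \ref{lemmaformula}, integrate (or test against $\varphi$), drop or integrate by parts the curl term using $Q_L$-periodicity and, for \eqref{eqhcontrollatutti} with $k\in\{1,2\}$, the vanishing of $\varphi$ at $x_3=0,T$, and bound $j_A$ and the derivative terms through the energy. The one genuine variation is the pointwise identity $|(\nabla_A u)_{k+1}|^2 + |(\nabla_A u)_{k+2}|^2 - |\calD_A^ku|^2 = 2\,\Im\bigl((\nabla_A u)_{k+2}\,\overline{(\nabla_A u)_{k+1}}\bigr)$, which by Cauchy--Schwarz gives the pointwise bound $|\rho B_k + (\nabla\times j_A)_k|\le |(\nabla_A u)_{k+1}|^2+|(\nabla_A u)_{k+2}|^2$. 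The paper instead bounds the two quadratic pieces separately via $|\calD_A^3u|^2\le 2|\nabla_A'u|^2$ and then controls each by the energy; your one-line observation is cleaner and yields slightly better constants ($2+\sqrt2$ and $3(2+\sqrt2)$ in place of $8$ and $16$), but the overall structure, the use of the polar decomposition to bound $|\nabla\rho|$ and $|j_A|$, and the treatment of the curl term are the same. Your remark that the hypothesis $\varphi|_{Q(0)}=\varphi|_{Q(T)}=0$ is only essential for $k\in\{1,2\}$ is exactly the point, since only then does $(\nabla\times j_A)_k$ involve $\partial_3$.
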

\begin{proof}
We start with (\ref{eqsectionrhohunemnorhodue}).  By Lemma \ref{lemmaformula},
  \begin{equation*}
    \int_{Q(z)} \rho B_3 \,dx'= 
    \int_{Q(z)} \left[|\nabla_A'u|^2-\nabla'\times
    j'_A-|\calD_A^3u|^2\right]dx'\,. 
  \end{equation*}
  The integral of $\nabla'\times j'_A$ is zero, since $j'_A$ is
  $Q_L$-periodic and only in-plane derivatives appear. The other terms
  are bounded by the energy, i.e.,
  \begin{equation*}
   \frac12 \int_{Q(z)} |\calD_A^3u|^2\,dx'\le
    \int_{Q(z)} |\nabla_A'u|^2\,dx'\le \frac{E(z)}{1-\kappa\sqrt2}\le 4E(z)\,,
  \end{equation*}
  where we used $\kappa\le 1/2$.
 This concludes the proof of 
 (\ref{eqsectionrhohunemnorhodue}).

The argument for (\ref{eqsectionrhohunemnorho}) is similar. We write
  \begin{equation*}
    \int_{Q(z)} \rho B_3 (1-\rho)\,dx'= 
    \int_{Q(z)} \left[|\nabla_A'u|^2-\nabla'\times
    j'_A-|\calD_A^3u|^2\right](1-\rho)\,dx'\,. 
  \end{equation*}
The terms with $\nabla_Au$ and $\calD_Au$  can be estimated as above. The term
  with $\nabla'\times j_A'$
   however needs more care. Since $j_A$ and $\rho$ are  $Q_L$-periodic, 
 $|j_A|\le \rho^{1/2}|\nabla_A  u|$, $\rho \le 1$ and $|\nabla\rho^{1/2}|\le|\nabla_Au|$,  an
 integration by parts leads to
  \begin{equation*}
    \left|\int_{Q(z)} (1-\rho) \nabla'\times j_A'\,dx'\right|=
    \left|2\int_{Q(z)} \rho^{1/2} \nabla'\rho^{1/2} \times  j_A'\,dx'\right|
    \le \frac{2}{1-\kappa \sqrt2}E(z)\,.
  \end{equation*}
This proves (\ref{eqsectionrhohunemnorho}).

Finally, we consider  (\ref{eqhcontrollatutti}). Here we include the
other components, and 
we required that the localization function $\varphi$ vanishes on the top and bottom
boundaries (where we have no periodicity). We compute
\begin{eqnarray*}
   \left| \int_{Q_{L,T}} \rho B_k\varphi \,dx\right| &=&
\left|    \int_{Q_{L,T}} \left[
|\nabla_A^{(k+1)} u|^2+|\nabla_A^{(k+2)} u|^2 - |\calD_A^{(k)}u|^2 - (\nabla\times j_A)_k
\right]\varphi\,dx\right| \\
 &\le&
 16 E \|\varphi\|_{L^\infty} + \int_{Q_{L,T}} |j_A|\,|\nabla\varphi|\,dx\\
 &\le& 16E \|\varphi\|_{L^\infty}+
 \frac{1}{(1-\kappa\sqrt2)^{1/2}}
  E^{1/2} \|\nabla\varphi\|_{L^2(Q_{L,T})}\,.
\end{eqnarray*}
\end{proof}

\subsubsection{Surface energy}\label{secfullsurface}
We now show how the surface energy can be recovered from the functional. This arises  from the combination of a $|\nabla \rho^{1/2}|^2$ and a $(1-\rho)^2$ term, but the latter is not directly present in the energy, and needs first to be reconstructed.
\begin{lemma}
Let $(u,A)$ be admissible, and assume (\ref{eqparameters}) and  $\rho\le 1$.
  For every $z\in(0,T)$ we have
  \begin{equation}\label{eqrho6nc}
    \kappa^2\int_{Q(z)} \rho(1-\rho)^2 \,dx'\lsim E(z)\,.
  \end{equation}
The function $\chi$ defined in (\ref{eqdefchi}) satisfies
  \begin{equation}\label{eqnablaprimechi}
    \kappa\int_{Q(z)} |\nabla'\chi|\,dx' \lsim E(z)
  \end{equation}
and, for any $\ell>0$,
\begin{equation}\label{eqchichiell}
  \int_{Q(z)} |\chi_\ell-\chi|\,dx'
  \lsim\frac{\ell}{\kappa}  E(z)\,.
\end{equation}
Here $\chi_\ell$ is the average of $\chi$ over $\ell$-balls, defined in (\ref{eqdefaverage}).
\end{lemma}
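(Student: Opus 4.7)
The plan is to establish the three estimates in the order stated, treating (\ref{eqrho6nc}) as the analytic core from which (\ref{eqnablaprimechi}) and (\ref{eqchichiell}) will follow by elementary chain–rule manipulations and a standard translation estimate. The energy $E(z)$ directly controls only the $L^2$ deviation $B_3 - \frac{\kappa}{\sqrt 2}(1-\rho)$; reconstructing the ``perimeter-type'' quantities $\rho(1-\rho)^2$ and $|\nabla'\chi|$ must therefore route through the weak Meissner identity (\ref{eqsectionrhohunemnorho}).

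For (\ref{eqrho6nc}) I would start from the trivial algebraic identity
\begin{equation*}
  \frac{\kappa}{\sqrt 2}\,\rho(1-\rho)^2 \;=\; \rho(1-\rho)\,B_3 \;-\; \rho(1-\rho)\!\left(B_3 - \tfrac{\kappa}{\sqrt 2}(1-\rho)\right)
\end{equation*}
and integrate over $Q(z)$. The first integral on the right is bounded by $16\,E(z)$ thanks to (\ref{eqsectionrhohunemnorho}). For the second, Cauchy–Schwarz together with the pointwise inequality $\rho^2(1-\rho)^2\le \rho(1-\rho)^2$ (valid because Lemma \ref{lemmarhominoredi1} lets me assume $\rho\le 1$) yields a factor of $X^{1/2}E(z)^{1/2}$, where $X:=\int_{Q(z)}\rho(1-\rho)^2\,dx'$ and the $L^2$ control on $B_3-\tfrac{\kappa}{\sqrt2}(1-\rho)$ is read off directly from the definition of $E(z)$. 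One arrives at $\frac{\kappa}{\sqrt 2}X\le 16\,E(z)+X^{1/2}E(z)^{1/2}$; absorbing the last term via Young's inequality with weight $\kappa/\sqrt2$, and using $\kappa\le 1/2$ so that $1/\kappa\lesssim 1/\kappa^2$, delivers $\kappa^2 X\lesssim E(z)$.

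For (\ref{eqnablaprimechi}) the chain rule gives $\nabla'\chi=-4\rho^{1/2}(1-\rho)\nabla'\rho^{1/2}$, and Lemma \ref{lemmarhominoredi1} supplies the pointwise bound $|\nabla'\rho^{1/2}|\le |\nabla_A' u|$. A single Cauchy–Schwarz then produces
\begin{equation*}
  \int_{Q(z)} |\nabla'\chi|\,dx' \,\lesssim\, \left(\int_{Q(z)} \rho(1-\rho)^2\,dx'\right)^{\!1/2}\!\left(\int_{Q(z)} |\nabla_A' u|^2\,dx'\right)^{\!1/2},
\end{equation*}
and both factors are controlled: the first by (\ref{eqrho6nc}), the second by $E(z)/(1-\kappa\sqrt2)\lesssim E(z)$, since the constraint $\kappa\le 1/2$ keeps the prefactor bounded. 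Collecting, $\int|\nabla'\chi|\lesssim E(z)/\kappa$, as claimed. For (\ref{eqchichiell}) I would then invoke the classical translation–by–mollification inequality $\int_{Q(z)}|\chi(x'+y')-\chi(x')|\,dx'\le |y'|\int_{Q(z)}|\nabla'\chi|$ for $y'\in B_\ell$, average over $y'\in B_\ell\subset\R^2$, swap integrals by Fubini (recognising $\chi_\ell-\chi$ on the left), and plug in (\ref{eqnablaprimechi}).

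The main obstacle is Step 1: the energy density $(B_3-\tfrac{\kappa}{\sqrt2}(1-\rho))^2$ does \emph{not} see $(1-\rho)^2$ in isolation. A naive expansion produces a cross-term $\int(1-\rho)B_3\,dx'=\bext L^2-\int\rho B_3$, whose bulk contribution $\bext L^2$ is not controlled by $E(z)$. The multiplicative factor $\rho$ is exactly what cancels this bulk obstruction via Lemma \ref{lemmarhoB3}, which is why the statement concerns $\rho(1-\rho)^2$ rather than $(1-\rho)^2$; I would be careful to preserve this $\rho$-weight through every multiplication and every application of Young's inequality, since dropping it at any stage would reintroduce the uncontrolled bulk term.
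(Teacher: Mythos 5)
Your proof is correct and takes essentially the same route as the paper: both rest on the weak Meissner estimate (\ref{eqsectionrhohunemnorho}), the normalization $\rho\le1$ from Lemma \ref{lemmarhominoredi1}, the chain rule $\nabla'\chi = -4\rho^{1/2}(1-\rho)\nabla'\rho^{1/2}$, and the translation bound for $\chi_\ell-\chi$. The only tactical difference is in (\ref{eqrho6nc}), where the paper sidesteps your Cauchy--Schwarz-plus-Young absorption by integrating the pointwise inequality $\left(B_3-\tfrac{\kappa}{\sqrt2}(1-\rho)\right)^2 \ge \rho\left(B_3-\tfrac{\kappa}{\sqrt2}(1-\rho)\right)^2 \ge \tfrac{\kappa^2}{2}\rho(1-\rho)^2 - \kappa\sqrt2\,\rho B_3(1-\rho)$ directly (dropping $\rho B_3^2\ge0$), and similarly uses AM--GM rather than Cauchy--Schwarz for (\ref{eqnablaprimechi}); these are equivalent.
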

\begin{proof}
We have
\begin{eqnarray*}
  \left(B_3 - \frac{\kappa}{\sqrt2} (1-\rho)\right)^2 
&\ge& \rho \left(B_3 - \frac{\kappa}{\sqrt2} (1-\rho)\right)^2\\
&\ge& \frac{\kappa^2}{2} \rho(1-\rho)^2 - \kappa \sqrt2\rho B_3(1-\rho)\,.
\end{eqnarray*}
Therefore, recalling (\ref{eqsectionrhohunemnorho}),  we have 
\begin{eqnarray*}
\frac{\kappa^2}{2}  \int_{Q(z)} \rho(1-\rho)^2 \,dx'\le E(z)+16\sqrt2 \kappa E(z)\,,
\end{eqnarray*}
and (\ref{eqrho6nc}) is proven.

At the same time, since $|\nabla\rho^{1/2}|\le |\nabla_Au|$, we have
\begin{equation*}
  \int_{Q(z)}|\nabla'\rho^{1/2}|^2 \,dx'\le 4E(z)\,.
\end{equation*}
Therefore
\begin{eqnarray*}
  E(z)&\gsim&\int_{Q(z)}\left[|\nabla'\rho^{1/2}|^2 + \kappa^2 \rho(1-\rho)^2\right]dx' \\
&\ge&
2 \int_{Q(z)} \kappa \rho^{1/2}(1-\rho) |\nabla'\rho^{1/2}| \,dx'
= \frac{\kappa}{2} \int_{Q(z)} |\nabla' \chi|\,dx'\,,
\end{eqnarray*}
since $\nabla'\chi = 4 \rho^{1/2}(\rho-1)\nabla'\rho^{1/2}$. This proves
(\ref{eqnablaprimechi}). 

Let now $\chi_\ell$ be defined as in  (\ref{eqdefaverage}). By Jensen's inequality and the mean-value
theorem we obtain
\begin{equation*}
  \int_{Q(z)} |\chi_\ell-\chi|\,dx' \le 
  \sup_{|h'|\le \ell} 
  \int_{Q(z)} |\chi(x'+h')-\chi(x')|\,dx' \le
  \ell \int_{Q(z)}  |\nabla'\chi|\,dx'\,.
\end{equation*}
Notice that this still holds if $\ell>L$ (indeed, in this case the coefficient could be improved to $L$).
This concludes the proof of (\ref{eqchichiell}).  
\end{proof}

\subsubsection{Equidistribution of the phases}
We show that in every section $Q(z)$ with a good energy bound the volume fraction of the normal
 phase is approximately ``right'', in the sense that it can be obtained assuming that $B_3$ 
 equals $\kappa/\sqrt2$ in the normal phase, and zero outside.
\begin{lemma}\label{lemmaequidistribution}
Let $(u,A)$ be admissible, and assume (\ref{eqparameters}) and $\rho\le 1$.
\begin{enumerate}
 \item  If
\begin{equation}\label{eqEzgoodequip}
E(z)\le \frac{1}{8} \kappa \bext L^2\,,
\end{equation}
 then  
  \begin{equation*}
    \int_{Q(z)} \chi \,dx'=    \int_{Q(z)} (1-\rho)^2 \,dx'\sim \frac{\bext}{\kappa}L^2\,. 
  \end{equation*}
\item If
\begin{equation}\label{eqEzgoodequiptot}
E\le \frac{1}{8} \kappa \bext L^2T\,,
\end{equation}
 then  
  \begin{equation*}
    \int_{Q_{L,T}} \chi \,dx=    \int_{Q_{L,T}} (1-\rho)^2 \,dx\sim \frac{\bext}{\kappa}L^2T\,. 
  \end{equation*}
   \end{enumerate}
\end{lemma}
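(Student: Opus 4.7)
The strategy is to leverage three ingredients already at hand: Lemma \ref{lemmaHmeno12}, which pins the flux $\int_{Q(z)} B_3\, dx' = \bext L^2$; the weak Meissner bound (\ref{eqsectionrhohunemnorhodue}), $|\int_{Q(z)} \rho B_3\, dx'| \le 8 E(z)$; and the London-type penalty $\int_{Q(z)}(B_3 - \frac{\kappa}{\sqrt 2}(1-\rho))^2 dx'$, which sits inside $E(z)$ by definition. Splitting $B_3 = \rho B_3 + (1-\rho)B_3$ lets us transfer the total flux onto the ``normal'' region, and the London penalty then allows us to replace $B_3$ there by $\frac{\kappa}{\sqrt 2}(1-\rho)$, producing precisely $\frac{\kappa}{\sqrt 2}\chi$ with $\chi = (1-\rho)^2$.

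Concretely, we write
\[
\bext L^2 \,=\, \int_{Q(z)} \rho B_3\, dx' + \frac{\kappa}{\sqrt 2}\int_{Q(z)} \chi\, dx' + \int_{Q(z)}(1-\rho)\Bigl(B_3 - \frac{\kappa}{\sqrt 2}(1-\rho)\Bigr) dx'.
\]
The first right-hand term is bounded by $8 E(z)$ by (\ref{eqsectionrhohunemnorhodue}); the third, by Cauchy--Schwarz together with $(1-\rho)^2 = \chi$, is bounded by $\bigl(\int_{Q(z)} \chi\, dx'\bigr)^{1/2} E(z)^{1/2}$. Setting $J := \frac{\kappa}{\sqrt 2}\int_{Q(z)} \chi\, dx'$ and $a := \bext L^2$, and using the hypothesis $E(z) \le \kappa a/8$ together with $\kappa \le 1/2$, this reads
\[
|J - a| \,\le\, 8 E(z) + \sqrt{\sqrt 2\, J E(z)/\kappa} \,\le\, \frac{a}{2} + \frac{1}{2}\sqrt{Ja}.
\]
A routine Young/AM--GM absorption $\sqrt{Ja}/2 \le J/4 + a/4$ then yields both the upper bound $J \le 7a/3$ and the lower bound $J \ge a/5$, i.e.\ $J \sim a$, which translates to $\int_{Q(z)} \chi\, dx' \sim \bext L^2/\kappa$.

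Part (ii) is obtained by repeating the same computation after integration in $z \in (0,T)$: the flux identity of Lemma \ref{lemmaHmeno12} integrates to $\int_{Q_{L,T}} B_3\, dx = \bext L^2 T$, integrating (\ref{eqsectionrhohunemnorhodue}) in $z$ gives $|\int_{Q_{L,T}} \rho B_3\, dx| \le 8 E$, and the London penalty on the slab is bounded by $E$ by the definition of $E$. Everything is then identical after replacing $a$ by $\bext L^2 T$ and $E(z)$ by $E$. I do not anticipate a real obstacle here: the lemma is essentially a bookkeeping step that cleanly separates the ``Meissner on average'' and ``London penalty'' parts of $E$ from the fixed flux, and the only delicate point is the algebraic absorption ensuring that the two error terms (of different homogeneity in $\int\chi$) can both be subsumed into $J$ and $a$.
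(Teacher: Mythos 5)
Your proposal is correct and follows essentially the same route as the paper's proof: you use Lemma \ref{lemmaHmeno12} for the flux constraint, split $B_3$ into $\rho B_3 + (1-\rho)B_3$, apply the weak Meissner estimate (\ref{eqsectionrhohunemnorhodue}) to the first piece and Cauchy--Schwarz plus the London penalty to the second, and absorb via Young/AM--GM exactly as in the paper (the paper chooses the Young parameter slightly differently, obtaining $\frac{\kappa}{4}\int\chi + 5E(z)/\kappa$ rather than your $\frac34 a + \frac14 J$, but this is a cosmetic difference). Part (ii) is handled identically in both arguments by integrating in $z$.
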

\begin{proof}
  We start with assertion (i). Using Lemma \ref{lemmaHmeno12} 
  we write
  \begin{alignat*}1
\bext L^2 -  \frac{\kappa}{\sqrt2}  \int_{Q(z)}\chi\,dx' &=
    \int_{Q(z)} \left(B_3 - \frac{\kappa}{\sqrt2} (1-\rho)^2 \right)\,dx'\\
&=
\int_{Q(z)} (1-\rho)\left(B_3 - \frac{\kappa}{\sqrt2} (1-\rho)\right)\,dx'
+\int_{Q(z)} \rho B_3\,dx'
  \end{alignat*}
and estimate, using Cauchy-Schwarz, (\ref{eqsectionrhohunemnorhodue}), and $\kappa\le 1/2$, 
\begin{alignat*}1
  \left|\bext L^2 -  \frac{\kappa}{\sqrt2}  \int_{Q(z)} \chi \, dx'\right|&\le 
 \left( \int_{Q(z)} (1-\rho)^2\,dx'\right)^{1/2} E^{1/2}(z) +8E(z)
\\
&\le \frac14\kappa \int_{Q(z)} \chi\,dx'+ \frac{E(z)}{\kappa}+4 \frac{E(z)}{\kappa}\,.
\end{alignat*}
The conclusion follows.

The second part is proven analogously, just extending all integrals to $Q_{L,T}$.
\end{proof}

\subsubsection{Interior term}
The next lemma is the key ingredient of our proof. It shows that, if the  localization is performed
appropriately, in low-energy sections the field $B_3$ necessarily concentrates. The concentration is 
made quantitative by a test function constructed via Lemma  \ref{lemmatestfunction}
starting from $\chi=(1-\rho)^2$.
\begin{lemma}\label{lemmainterior}
For any
admissible $(u,A)$ with $\rho\le 1$, any parameters which obey (\ref{eqparameters}),
any $r\ge \ell>0$,
 and any $z\in(0,T)$ such that (\ref{eqEzgoodequip})  holds one has
 \begin{align*}
   \frac\kappa{\sqrt2} \int_{Q(z)}  \chi \,dx' - \int_{Q(z)} B_3 \psi \,dx'\lsim \frac{E(z)}{\kappa}+\ell  E(z)+
 \left(\frac{r^2\bext L^2 }{\ell^2\kappa}\right)^{1/2} E^{1/2} (z)\,.
\end{align*}
The function $\psi$ is the one
is obtained via Lemma \ref{lemmatestfunction} from 
the restriction to $Q(z)$ of $\chi=(1-\rho)^2$.
\end{lemma}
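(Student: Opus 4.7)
The plan is to mirror the sharp-interface argument of Lemma \ref{lemmaoldinterior}, replacing the two exact identities $\chi^2=\chi$ and $B(1-\chi)=0$ by quantitative diffuse-interface counterparts: the former is controlled by the bound $\int_{Q(z)}\rho(1-\rho)^2\,dx'\lsim E(z)/\kappa^2$ from (\ref{eqrho6nc}), and the latter by the $L^2$-closeness $\|B_3-\frac{\kappa}{\sqrt 2}(1-\rho)\|_{L^2(Q(z))}\le E^{1/2}(z)$ built directly into the energy (\ref{eqdefE}).

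Concretely, I would write $B_3=\frac{\kappa}{\sqrt 2}(1-\rho)+(B_3-\frac{\kappa}{\sqrt 2}(1-\rho))$ and split $1-\rho=\chi+\rho(1-\rho)$, so that multiplying by $\psi$, integrating over $Q(z)$, and adding $\frac{\kappa}{\sqrt 2}\int_{Q(z)}\chi(1-\psi)\,dx'$ to convert $\int\chi\psi$ into $\int \chi$ yields the identity
\begin{align*}
\frac{\kappa}{\sqrt 2}\int_{Q(z)}\chi\,dx' - \int_{Q(z)}B_3\psi\,dx'
=& \frac{\kappa}{\sqrt 2}\int_{Q(z)}\chi(1-\psi)\,dx' - \frac{\kappa}{\sqrt 2}\int_{Q(z)}\rho(1-\rho)\psi\,dx'\\
&- \int_{Q(z)}\Bigl(B_3-\frac{\kappa}{\sqrt 2}(1-\rho)\Bigr)\psi\,dx'.
\end{align*}
Since $\rho(1-\rho)\psi\ge 0$, the middle term on the right is non-positive and may simply be discarded. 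It then suffices to estimate the other two. For the first, Lemma \ref{lemmatestfunction}(i)--(ii) together with $\sup\chi\le 1$ imply $0\le 1-\psi\le 1-\chi_\ell$; decomposing $1-\chi_\ell=(1-\chi)+(\chi-\chi_\ell)$, using the pointwise bound $\chi(1-\chi)\le 2\rho(1-\rho)^2$, and invoking (\ref{eqrho6nc}) and (\ref{eqchichiell}) gives $\frac{\kappa}{\sqrt 2}\int_{Q(z)}\chi(1-\psi)\,dx'\lsim E(z)/\kappa+\ell E(z)$. The last term is handled by Cauchy--Schwarz together with the $L^2$ bound $\|\psi\|_{L^2(Q_L)}^2\lsim r^2\bext L^2/(\ell^2\kappa)$, which follows from (\ref{eqlemmatestfunctl2}) combined with the equidistribution $\|\chi\|_{L^1(Q(z))}\sim\bext L^2/\kappa$ of Lemma \ref{lemmaequidistribution}(i); this produces the $(r^2\bext L^2/(\ell^2\kappa))^{1/2}E^{1/2}(z)$ contribution.

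The main obstacle is the diffuse nature of $\chi=(1-\rho)^2$: the clean inequality $\int\chi\psi\ge \int\chi-\|\chi-\chi_\ell\|_{L^1}$ used in the sharp-interface case relied on $\chi^2=\chi$ and no longer holds. Closing this gap forces one to estimate the new quantity $\int\chi(1-\chi)$, which has no sharp-interface analogue, and the bound $\chi(1-\chi)\le 2\rho(1-\rho)^2$ together with (\ref{eqrho6nc}) is precisely what produces the $E(z)/\kappa$ error in the statement---the single (rather than squared) power of $\kappa$ being the signature of the passage between the quadratic quantity $\chi$ and the linear quantity $1-\rho$ that naturally appears in the Ginzburg--Landau energy density.
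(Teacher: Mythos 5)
Your proof is correct and uses essentially the same approach as the paper's: the same three error terms arise from the same sources — the $\chi$-versus-$(1-\rho)$ gap via \eqref{eqrho6nc}, the mollification error via \eqref{eqchichiell}, and Cauchy--Schwarz together with the $L^2$ bound on $\psi$ from \eqref{eqlemmatestfunctl2} and Lemma \ref{lemmaequidistribution}(i). The only (cosmetic) difference is organizational: you keep an exact identity and drop the nonnegative $\frac{\kappa}{\sqrt2}\int\rho(1-\rho)\psi$ term by sign, whereas the paper instead bounds $\chi\chi_\ell\le(1-\rho)\psi$ before comparing with $B_3\psi$; both routes reduce to the same estimates.
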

\begin{proof}
We first write
\begin{align}\nonumber
     \kappa \int_{Q(z)}  \chi \,dx'
  &= \kappa \int_{Q(z)} (\chi-\chi^2) dx' + \kappa\int_{Q(z)} \chi(\chi-\chi_\ell) dx'
  + \kappa\int_{Q(z)} \chi \chi_\ell dx'\,.
\end{align}
We now estimate the three terms on the right-hand side.
For the first one, we compute
\begin{eqnarray*}
   \kappa \int_{Q(z)}(  \chi-\chi^2) \,dx'&\le&
 \kappa \int_{Q(z)}  (1-\rho)^2(2\rho-\rho^2)\,dx'\\
&\le&  2 \kappa \int_{Q(z)}  \rho(1-\rho)^2\,dx'\lsim  \frac{E(z)}{\kappa}
\end{eqnarray*}
by (\ref{eqrho6nc}). 
For the second, 
\begin{equation*}
 \kappa\int_{Q(z)} \chi(\chi-\chi_\ell) dx' \le 
\kappa\int_{Q(z)} |\chi-\chi_\ell| dx' 
 \lsim\ell  E(z)
 \end{equation*}
by (\ref{eqchichiell})\,.
For the third, we write, recalling Lemma \ref{lemmatestfunction}(i) and the definition of $\chi$ and
$\chi_\ell$,
\begin{alignat*}1 
 \kappa\int_{Q(z)} \chi \chi_\ell dx'
 &\le \kappa\int_{Q(z)} (1-\rho) \psi dx'\,.
\end{alignat*}
Therefore
\begin{align}\label{eqchiurho2}
  \kappa \int_{Q(z)}  \chi \,dx'&\le \kappa\int_{Q(z)} (1-\rho) \psi\, dx'+c\frac{E(z)}{\kappa}+c\ell  E(z)\,.
\end{align}
At this point we write
\begin{equation*}
  \int_{Q(z)} B_3 \psi \,dx'= 
 \int_{Q(z)} \frac{\kappa}{\sqrt2} (1-\rho) \psi\,dx'+
 \int_{Q(z)}\left(B_3- \frac{\kappa}{\sqrt2} (1-\rho)\right) \psi\,dx'\,.
\end{equation*}
We use \begin{equation*}
  \left| \int_{Q(z)}\left(B_3- \frac{\kappa}{\sqrt2} (1-\rho)\right) \psi\,dx'
\right|   \le\|\psi\|_{L^2(Q_L)} E^{1/2} (z)
\end{equation*}
and (\ref{eqchiurho2}) to obtain
\begin{align*}
   \frac\kappa{\sqrt2} \int_{Q(z)}  \chi \,dx' \le \int_{Q(z)} B_3 \psi \,dx'+c\frac{E(z)}{\kappa}+c\ell  E(z)+
  \|\psi\|_{L^2(Q_L)} E^{1/2} (z)\,.
\end{align*}
Recalling  (\ref{eqlemmatestfunctl2}),
\begin{equation*}
\|\psi\|_{L^2(Q_L)}^2 
\lsim
\frac{r^2}{\ell^2}{\|\chi\|_{L^1}(Q_L)}\lsim
\frac{r^2\bext L^2 }{\ell^2\kappa}\,,
\end{equation*}
where we used once again Lemma \ref{lemmaequidistribution}(i),  concludes the proof.
\end{proof}

\subsubsection{Transport term}
It remains to relate the behavior of $B_3$ in the interior with the behavior 
at the boundary.
As in the sharp-interface case, this is done in two steps, but both steps are different than
the corresponding ones in Section \ref{secoldtransport}.
The first estimate is easy, but does not give the optimal bound, since 
it is oblivious to  the fact that $B'$ needs to be concentrated on a small volume
(this is relevant, since we are estimating an $L^1$ term with an energy that contains the corresponding $L^2$ norm). The estimate is then improved in the following lemma.
\begin{lemma}\label{lemmatransportshort}
Let $(u,A)$ be admissible, $\psi\in W^{1,2}_\loc(\R^2)$, $Q_L$-periodic.
  For any pair $z_1, z_2\in (0,T)$ we have
  \begin{equation*}
    \left|\int_{Q(z_1)}B_3\psi \,dx'- \int_{Q(z_2)}B_3\psi\,dx' \right|
    \le  |z_2-z_1|^{1/2} E^{1/2} \|\nabla'\psi\|_{L^2(Q_L)}\,.
  \end{equation*}
\end{lemma}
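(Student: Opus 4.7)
The plan is to convert the difference of boundary integrals into a volume integral over the slab $Q_L\times(z_1,z_2)$ using the divergence-free condition on $B$, and then apply Cauchy--Schwarz to split off the $\|B'\|_{L^2}$ factor, which the energy $E$ controls directly.

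More concretely, I would first assume without loss of generality that $z_1<z_2$. Since $B=\nabla\times A$ is divergence-free, in the sense of distributions on $Q_L\times\R$, we have $\partial_3 B_3=-\Div' B'$, and hence
\begin{equation*}
\int_{Q(z_2)}B_3\psi\,dx'-\int_{Q(z_1)}B_3\psi\,dx'
=\int_{Q_L\times(z_1,z_2)}(\partial_3 B_3)\psi\,dx
=-\int_{Q_L\times(z_1,z_2)}(\Div' B')\psi\,dx.
\end{equation*}
Because $\psi$ depends only on $x'$ and is $Q_L$-periodic, an integration by parts in the $x'$-variables (the boundary terms on the lateral faces of $Q_L\times(z_1,z_2)$ cancel by $Q_L$-periodicity of $B'$ and $\psi$) gives
\begin{equation*}
-\int_{Q_L\times(z_1,z_2)}(\Div' B')\psi\,dx
=\int_{Q_L\times(z_1,z_2)}B'\cdot\nabla'\psi\,dx.
\end{equation*}

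Now I apply Cauchy--Schwarz to the last integral. Since $\psi$ is independent of $x_3$,
\begin{equation*}
\|\nabla'\psi\|_{L^2(Q_L\times(z_1,z_2))}=|z_2-z_1|^{1/2}\|\nabla'\psi\|_{L^2(Q_L)},
\end{equation*}
and by definition \eqref{eqdefE} of $E$,
\begin{equation*}
\|B'\|_{L^2(Q_L\times(z_1,z_2))}^2\le\int_{Q_L\times\R}|B'|^2\,dx\le E.
\end{equation*}
Combining these estimates yields the desired bound.

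I do not anticipate any real obstacle: the only subtleties are ensuring the distributional identity $\Div B=0$ can be tested against $\psi(x')\uno_{(z_1,z_2)}(x_3)$ (which is standard since $B\in L^2_\loc$ and one may mollify in $x_3$ and then pass to the limit, using that $B_3$ has well-defined traces on each section $Q(z)$) and checking that no lateral boundary contributions appear, which follows from the joint $Q_L$-periodicity of $B'$ and $\psi$. The fact that $\psi$ has only $W^{1,2}$ regularity, rather than $W^{1,\infty}$ as in Lemma \ref{LemmaMongeKanto}, is precisely what is gained by using the $L^2$ control on $B'$ rather than the $L^1$ bound.
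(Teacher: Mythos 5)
Your proof is correct and follows essentially the same route as the paper's: pass to a volume integral via $\Div B=0$, integrate by parts in $x'$ using periodicity, then apply Cauchy--Schwarz and bound $\|B'\|_{L^2}$ by $E^{1/2}$. The remarks about mollifying in $x_3$ and about lateral boundary terms are sound but not spelled out in the paper either.
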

\begin{proof} We can assume without loss of generality that  $z_1<z_2$.
  We compute, using $\Div B=0$ and  the $Q_L$-periodicity of $B$ and
  $\psi$, 
  \begin{alignat*}1
     \int_{Q_L} [B_3(\cdot,z_2)-B_3(\cdot,z_1)]\psi \,dx'
=&\int_{Q_L\times(z_1,z_2)}
\frac{\partial B_3}{\partial x_3} \psi\,dx 
=-\int_{Q_L\times(z_1,z_2)}
\nabla'\cdot B' \psi\,dx\\ 
=&\int_{Q_L\times(z_1,z_2)}
 B'\cdot\nabla\psi\,dx\\ 
&\le\left(\int_{Q_{L}\times(z_1,z_2)} |B'|^2\,dx\right)^{1/2} 
\left(|z_2-z_1| \int_{Q_{L}} |\nabla\psi|^2 \,dx'\right)^{1/2}\\
&\le |z_2-z_1|^{1/2} E^{1/2} \|\nabla\psi\|_{L^2(Q_L)}\,.
  \end{alignat*}
This concludes the proof.
\end{proof}

\begin{lemma}\label{lemmatransportlong}
Let $(u,A)$ be admissible, and assume (\ref{eqparameters}) holds and $\rho\le 1$.
If (\ref{eqEzgoodequiptot}) holds, then for any $z\in (0,T)$ 
and any  $\psi\in W^{1,\infty}(\R^2)$, $Q_L$-periodic,
we have
  \begin{equation*}
    \left|\int_{Q(z)}B_3\psi\,dx' - \int_{Q(0)}B_3\psi \,dx'\right|
    \lsim
    \left(\frac{\bext L^2T}{\kappa}\right)^{1/2} 
\|\nabla'\psi\|_{L^\infty}E^{1/2}+ \|\nabla'\psi\|_{L^2(Q_L)} E^{1/2} \,.
  \end{equation*}
\end{lemma}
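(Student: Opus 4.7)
We follow the template of Lemma~\ref{lemmatransportsharp}, invoking Lemma~\ref{lemmaformula} to exploit the approximate Meissner effect. As in the proof of Lemma~\ref{lemmatransportshort}, the divergence-free condition on $B$ and $Q_L$-periodicity give
\[
I:=\int_{Q(z)}B_3\psi\,dx'-\int_{Q(0)}B_3\psi\,dx'=\int_{Q_L\times(0,z)}B'\cdot\nabla'\psi\,dx,
\]
and the goal is to replace the suboptimal $|z|^{1/2}$ of that lemma by a constant, using that $B'$ is essentially supported where $\rho$ is small.

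Split $B'=(1-\rho)B'+\rho B'$. For the first piece, Cauchy--Schwarz together with $\|1-\rho\|_{L^2(Q_{L,T})}^2=\|\chi\|_{L^1(Q_{L,T})}\lsim\bext L^2T/\kappa$ (Lemma~\ref{lemmaequidistribution}(ii)) and $\|B'\|_{L^2(Q_{L,T})}^2\le E$ produces precisely the first term of the claim. For the second piece, Lemma~\ref{lemmaformula} applied with $k=1,2$ gives
\[
\rho B_k=|(\nabla_Au)_{k+1}|^2+|(\nabla_Au)_{k+2}|^2-|\calD_A^ku|^2-(\nabla\times j_A)_k;
\]
the quadratic terms have $L^1$ norm $\lsim E$, so their contribution $\|\nabla'\psi\|_{L^\infty}E$ is absorbed into the first term via the hypothesis $E\lsim\kappa\bext L^2T$ together with $\kappa\le 1/2$, which yield $E\lsim(\bext L^2T/\kappa)^{1/2}E^{1/2}$.

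It remains to handle the curl contribution $-\int_{Q_L\times(0,z)}(\nabla\times j_A)'\cdot\nabla'\psi\,dx$. After mollifying $\psi$ if necessary, the identity $(\nabla\times j_A)\cdot v=\nabla\cdot(j_A\times v)+j_A\cdot(\nabla\times v)$ applied with $v=(\nabla'\psi,0)$, for which $\nabla\times v=0$, combined with the divergence theorem (lateral terms vanishing by $Q_L$-periodicity) collapses this to the boundary integrals
\[
\int_{Q(z)}(j_1\partial_2\psi-j_2\partial_1\psi)\,dx'-\int_{Q(0)}(j_1\partial_2\psi-j_2\partial_1\psi)\,dx'.
\]
Cauchy--Schwarz bounds each by $\|j'\|_{L^2(Q(z'))}\|\nabla'\psi\|_{L^2(Q_L)}$, and $|j'|\le|u||\nabla'_Au|$ with $\rho\le 1$ gives $\|j'\|_{L^2(Q(z'))}^2\lsim E(z')$.

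The hard part is that the slice energies $E(0)$ and $E(z)$ at the two fixed endpoints are not a priori controlled by the total $E$. I would bypass this by shifting to nearby good slices: since $T\ge 1/\kappa\ge 2$ and $\int_0^TE(z')\,dz'\le E$, a mean-value argument produces $\tilde 0\in(0,1)$ and $\tilde z\in(0,T)$ with $|\tilde z-z|\le 1$ such that $E(\tilde 0),E(\tilde z)\lsim E$. The short-distance differences $\int_{Q(0)}B_3\psi-\int_{Q(\tilde 0)}B_3\psi$ and $\int_{Q(\tilde z)}B_3\psi-\int_{Q(z)}B_3\psi$ are then controlled by Lemma~\ref{lemmatransportshort} and each contribute $\lsim\|\nabla'\psi\|_{L^2(Q_L)}E^{1/2}$ because the distances are at most $1$, while the Bogomoln'yi computation above, applied on the middle interval, produces trace terms at the \emph{good} slices $\tilde 0,\tilde z$, likewise bounded by $E^{1/2}\|\nabla'\psi\|_{L^2(Q_L)}$. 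Summing yields the second term of the claim; the residual case $z\lesssim 1$ is covered directly by Lemma~\ref{lemmatransportshort}, which in that range already gives $|I|\le z^{1/2}E^{1/2}\|\nabla'\psi\|_{L^2}\le E^{1/2}\|\nabla'\psi\|_{L^2}$.
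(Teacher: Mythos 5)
Your proof is correct and follows essentially the same strategy as the paper's: split $B'$ into $(1-\rho)B'$ and $\rho B'$, use Lemma~\ref{lemmaformula} to rewrite $\rho B'$ (quadratic terms absorbed via (\ref{eqEzgoodequiptot}) and $\kappa\le 1/2$), and integrate the $\nabla\times j_A$ piece by parts, closing the argument with the short-transport Lemma~\ref{lemmatransportshort}. The only bookkeeping difference is at the two endpoints: the paper tests against a piecewise-linear cutoff $\eta(x_3)$ with $\delta=1$, so the $j_A$ term lands on $\nabla\eta$ and no slice regularity of $j_A$ is needed, while you instead shift to good slices $\tilde 0,\tilde z$ chosen by a mean-value argument (which also handles the a.e.\ slice regularity needed for the traces of $j'_A$) and apply the divergence theorem directly there; these are two equivalent presentations of the same estimate. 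One small slip: the ``residual case $z\lesssim 1$'' threshold should be $z\le 2$, to guarantee $\tilde 0<\tilde z$ when you do shift — this is precisely where the paper's proof invokes Lemma~\ref{lemmatransportshort} directly.
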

\begin{proof}
If $z\le 2$, this follows immediately from Lemma
  \ref{lemmatransportshort}. Assume $z>2$, and 
  fix $\delta\in(0,z/2)$. Let $\eta:\R\to\R$ be defined by
  \begin{equation*}
    \eta(x_3):=
    \begin{cases}
      \displaystyle
      \frac{x_3}{\delta} & \text{ if } 0<x_3<\delta\,,\\
      \displaystyle
      1 & \text{ if } \delta\le x_3\le z-\delta\,,\\
      \displaystyle
      \frac{z-x_3}{\delta} & \text{ if } z-\delta<x_3<z\,,\\
      0 & \text{ otherwise.}
    \end{cases}
  \end{equation*}
  We compute
  \begin{alignat*}1
    \int_{Q_{L,T}} B_3(x) &\psi(x') \frac{d\eta}{d x_3}(x_3) \,dx= 
    -\int_{Q_{L,T}} \frac{\partial B_3}{\partial x_3}(x) \psi(x')
    \eta(x_3) \,dx\\
=& \int_{Q_{L,T}} \nabla'\cdot B' \psi    \eta \,dx
= -\int_{Q_{L,T}}  B' \cdot (\nabla' \psi) \eta\,dx\\
=& -\int_{Q_{L,T}}  \rho B' \cdot (\nabla' \psi) \eta\,dx
- \int_{Q_{L,T}}  (1-\rho) B' \cdot (\nabla' \psi) \eta\,dx\,.
  \end{alignat*}
The second term can be estimated by
\begin{alignat*}1
  &\hskip-1cm \left|\int_{Q_{L,T}}  (1-\rho) B' \cdot (\nabla' \psi) \eta\,dx\right|\\
&   \le  \left(  \int_{Q_{L,T}}  (1-\rho)^2\,dx\right)^{1/2}
  \left(  \int_{Q_{L,T}}  |B'|^2\,dx\right)^{1/2}
\|\nabla'\psi\|_{L^\infty}\,.
\end{alignat*}
Since  (\ref{eqEzgoodequiptot}) holds, Lemma \ref{lemmaequidistribution} gives
\begin{equation*}
 \int_{Q_{L,T}}  (1-\rho)^2\,dx \lsim \frac{\bext L^2T}{\kappa} 
\end{equation*}
and therefore 
\begin{equation*}
\left|   \int_{Q_{L,T}}  (1-\rho) B' \cdot (\nabla' \psi) \eta\,dx\right|
  \lsim \left(\frac{\bext L^2T}{\kappa}\right)^{1/2} E^{1/2}
\|\nabla'\psi\|_{L^\infty}\,.
\end{equation*}
For the first term we use Lemma \ref{lemmaformula} to obtain
\begin{eqnarray*}
\left|   \int_{Q_{L,T}}  \rho B' \cdot (\nabla' \psi) \eta \,dx\right|
  & \lsim&  \int_{Q_{L,T}}  \left(|\nabla_A u|^2+|\calD_Au|^2\right) 
  |\nabla' \psi| \eta\,dx \\
&&\hskip5mm
   +\left|  \int_{Q_{L,T}} ( \nabla\times j_A) \cdot (\nabla' \psi) \eta\,dx\right| \,.
\end{eqnarray*}
The part containing $j_A$ can be transformed according to 
\begin{equation*}
  \int_{Q_{L,T}} \eta\,  (\nabla\times j_A)\cdot\nabla'\psi \,dx=
  -\int_{Q_{L,T}} \nabla\eta\cdot j_A\times \nabla'\psi\,dx\,.
\end{equation*}
This is easily proven by integration by parts and using the symmetry of
the triple product. Since $\|\nabla'\psi \, \frac{d \eta}{dx_3} \|_{L^2(Q_{L,T})}=\|\nabla'\psi\|_{L^2(Q_L)}
\|\frac{d \eta}{dx_3} \|_{L^2((0,T))}$ and $\|j_A\|_{L^2(Q_{L,T})}\lsim E^{1/2}$,
 we obtain
\begin{equation*}
\left|   \int_{Q_{L,T}}  \rho B' \cdot (\nabla' \psi) \eta \,dx\right|
\lsim E \|\nabla'\psi\|_{L^\infty} +  E^{1/2}\|\nabla'\psi\|_{L^2(Q_L)}
\|\frac{d \eta}{dx_3}\|_{L^2((0,T))} \,.
\end{equation*}
Adding terms, and replacing $\eta$ by $-\eta$, we conclude that 
\begin{equation}\label{eqfaverage}
    \left|\int_0^T f \frac{d \eta}{dx_3} \, dx_3 \right|\lsim
     \left(\frac{\bext L^2T}{\kappa}\right)^{1/2} E^{1/2}
\|\nabla'\psi\|_{L^\infty}+  E^{1/2}\|\nabla'\psi\|_{L^2(Q_L)}
\|\frac{d \eta}{dx_3}\|_{L^2((0,T))}  
\end{equation}
where we dropped the term $ E \|\nabla'\psi\|_{L^\infty}  $ using $\kappa\le 1$ and the assumption 
 (\ref{eqEzgoodequiptot}) and we defined
\begin{equation*}
  f(x_3):=\int_{Q(x_3)} B_3 \psi\,dx'\,.
\end{equation*}
This estimate controls the variation of $f$  on a scale $\delta$, and
can be combined with
Lemma \ref{lemmatransportshort}, which gives a bound on the H\"older $1/2$ norm
 of $f$, to obtain a pointwise estimate. Precisely, since
\begin{equation*}
  \int_0^T f \frac{d \eta}{dx_3} \,dx_3 = \frac1\delta\int_0^\delta f \,dx_3 - \frac1\delta
  \int_{z-\delta}^{z} f\,dx_3\,,
\end{equation*}
we get
\begin{equation*}
  |f(z)-f(0)|\le \left|\int_0^T f \frac{ d \eta}{dx_3} \,dx_3\right| +
   \sup_{x_3\in(0,\delta)} |f(x_3)-f(0)|+
   \sup_{x_3\in(z-\delta,z)} |f(x_3)-f(z)|\,.
\end{equation*}
Therefore
\begin{alignat*}1
\left|f(z)-f(0)\right|
   \lsim &
     \left(\frac{\bext L^2T}{\kappa}\right)^{1/2} E^{1/2}
\|\nabla'\psi\|_{L^\infty}+  E^{1/2}\frac{\|\nabla'\psi\|_{L^2(Q_L)}}{\delta^{1/2}}\\
&  +\delta^{1/2}E^{1/2}\|\nabla'\psi\|_{L^2(Q_L)}\,.
 \end{alignat*}
We finally choose $\delta=1$ and conclude the proof.
\end{proof}

\subsubsection{Exterior term}
\begin{lemma}\label{lemmaexterior}
  For all admissible $(u,A)$ we have
  \begin{equation*}
\left|    \int_{Q(0)} (B_3-\bext)\psi \,dx'\right|\lsim E^{1/2}    \|\psi\|_{H^{1/2}(Q_L)} \,.
  \end{equation*}
\end{lemma}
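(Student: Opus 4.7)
The strategy is essentially identical to that of Lemma \ref{lemmaextsharp}, since the exterior contribution to $E$ has precisely the same form as in the sharp-interface functional $F$. The key point is that the two regions-dependent terms in the definition \eqref{eqdefE}, namely $\int_{Q_L\times \R} |B'|^2 dx$ and $\int_{Q_L\times(\R\setminus(0,T))} |B_3 -\bext|^2dx$, together control $\|B-\Bext\|_{L^2(Q_L\times(-\infty,0))}^2\le E$, where $\Bext=\bext e_3$.

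I would pick an extension $\hat\psi\in H^1_\loc(\R^3)$, $Q_L$-periodic, realizing the homogeneous $H^{1/2}$ norm, i.e.\ $\hat\psi(\cdot,0)=\psi$ in the trace sense and
\[
\|\nabla\hat\psi\|_{L^2(Q_L\times(-\infty,0))}=\|\psi\|_{H^{1/2}(Q_L)}.
\]
Since $B=\nabla\times A$ is divergence-free and $\Bext$ is constant, $B-\Bext$ is divergence-free on $Q_L\times(-\infty,0)$. Integration by parts (together with horizontal $Q_L$-periodicity and the decay of $\hat\psi$ as $x_3\to-\infty$, which may be arranged by truncation) yields
\[
\int_{Q(0)}(B_3-\bext)\psi\,dx'=\int_{Q_L\times(-\infty,0)}(B-\Bext)\cdot\nabla\hat\psi\,dx.
\]

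Applying Cauchy–Schwarz then gives
\[
\left|\int_{Q(0)}(B_3-\bext)\psi\,dx'\right|\le \|B-\Bext\|_{L^2(Q_L\times(-\infty,0))}\,\|\nabla\hat\psi\|_{L^2(Q_L\times(-\infty,0))}\lsim E^{1/2}\,\|\psi\|_{H^{1/2}(Q_L)},
\]
where the last inequality uses the bound of the $L^2$ norm of $B-\Bext$ on the lower half-space by the exterior terms of \eqref{eqdefE}.

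There is no real obstacle here; the only mildly delicate point is justifying the integration by parts despite the unbounded domain, which is handled by restricting to $Q_L\times(-R,0)$ and letting $R\to\infty$, using that $\nabla\hat\psi\in L^2$ to control the boundary flux at $x_3=-R$ in a subsequential sense. Note that, unlike in Lemma \ref{lemmaextsharp}, no refinement of the estimate via $\ell$ and $r$ is stated here, because the relevant sharp-interface refinement is obtained later, when this lemma is combined with the bounds from Lemma \ref{lemmatestfunction} on the specific test function $\psi$ built from $\chi$.
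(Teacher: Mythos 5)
Your argument is exactly the one the paper uses: the paper's proof of Lemma~\ref{lemmaexterior} simply refers back to Lemma~\ref{lemmaextsharp}, whose proof is the extension-plus-divergence-theorem-plus-Cauchy--Schwarz argument you give, with the observation that the exterior terms in \eqref{eqdefE} bound $\|B-\Bext\|_{L^2(Q_L\times(-\infty,0))}^2$ by $E$. Your remark about justifying the integration by parts via truncation is a correct and harmless filling-in of a step the paper leaves implicit.
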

\begin{proof}
This is the same as in the sharp-interface case, cf. Lemma \ref{lemmaextsharp}.
\end{proof}

\subsubsection{Proof of the lower bound}
\begin{theorem}\label{theolowerbGL}
Let  $(u,A)$ be an admissible pair.
If 
\begin{equation*}
  \bext\le\frac18 \kappa\,,\hskip1cm
  \kappa\le\frac12\,,\hskip1cm\text{ and}\hskip1cm
  \kappa T\ge 1\,,
\end{equation*}
one has
\begin{equation*}
  E[u,A]\gsim \min \left\{\bext \kappa^{3/7} T^{3/7} L^2, 
\bext^{2/3} \kappa^{2/3}  T^{1/3}L^2\right\} \,.
\end{equation*}
\end{theorem}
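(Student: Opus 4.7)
The strategy mirrors the sharp-interface proof in Theorem \ref{theochilower1}, but using the lemmas assembled in Section \ref{lowerboundfull}, which adapt each ingredient to the diffuse setting. First, by Lemma \ref{lemmarhominoredi1} we may assume $\rho\le 1$ throughout. If $E[u,A]\ge \kappa\bext L^2 T/8$ then, since $\kappa T\ge 1$ implies $(\kappa T)^{1/7}\ge 1$, the bound \eqref{eqFzallgood}-type assumption forces the claimed lower bound immediately. Otherwise, \eqref{eqEzgoodequiptot} holds, and by a mean-value argument we fix $z\in(0,T)$ with $E(z)\le E/T$, so that \eqref{eqEzgoodequip} is satisfied.

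Next, I would apply Lemma \ref{lemmatestfunction} to $\chi=(1-\rho)^2$ restricted to $Q(z)$, with parameters $0<\ell\le r$ to be chosen later, obtaining a test function $\psi$. Decompose
\begin{equation*}
\int_{Q(z)} B_3 \psi \, dx' = \int_{Q_L}\bext\psi \, dx' + \Bigl[\int_{Q(z)} B_3 \psi\, dx'-\int_{Q(0)} B_3 \psi\, dx'\Bigr] + \int_{Q(0)} (B_3-\bext)\psi\, dx'.
\end{equation*}
Apply Lemma \ref{lemmainterior} on the left to bound $\frac{\kappa}{\sqrt2}\int_{Q(z)}\chi$ from above by $\int_{Q(z)}B_3\psi$ plus error. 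The first term on the right is controlled by $\bext\|\psi\|_{L^1}\lsim (r^2/\ell^2)\bext\int_{Q(z)}\chi$ via Lemma \ref{lemmatestfunction}(iii); Lemma \ref{lemmatransportlong} controls the bracketed transport term; Lemma \ref{lemmaexterior} controls the exterior term, using $\|\psi\|_{H^{1/2}}\lsim(r\bext L^2/(\ell^2\kappa))^{1/2}$ as in the sharp-interface case. Using Lemma \ref{lemmaequidistribution}(i) to say $\int_{Q(z)}\chi\sim \bext L^2/\kappa$, and choosing $r,\ell$ so that $r\le (\kappa/8\bext)^{1/2}\ell$ (possible by $\bext\le\kappa/8$) to absorb the $(r^2/\ell^2)\bext\int\chi$ term into the left-hand side, we arrive at
\begin{equation*}
\bext L^2 \lsim \frac{E(z)}{\kappa^2} + \ell \frac{E(z)}{\kappa} + \Bigl(\frac{r^2\bext L^2}{\ell^2\kappa}\Bigr)^{1/2}\frac{E^{1/2}(z)}{\kappa} + \Bigl(\frac{\bext L^2 T}{\kappa^2}\Bigr)^{1/2}\frac{E^{1/2}}{r} + \Bigl(\frac{r\bext L^2}{\ell^2\kappa}\Bigr)^{1/2}\frac{E^{1/2}}{\kappa},
\end{equation*}
where we used $\|\nabla\psi\|_\infty\lsim 1/r$ and $\|\nabla\psi\|_{L^2}^2\lsim \bext L^2/(\ell^2\kappa)$ from \eqref{eqlemmatestfunctl2}. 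Since $E(z)\le E/T$, this yields an inequality of the form $F\gsim \kappa\bext L^2 T \min\{r^2/T^2,\ \ell^2/(rT),\ 1/(\kappa\ell),\ \ell^2/r^2\}$, identical in structure to \eqref{eqfinalboundF}.

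The final optimization is then identical to that in the proof of Theorem \ref{theochilower1}: if $(\kappa T)^{1/7}\le (\kappa/8\bext)^{1/2}$, choose $\ell=T^{4/7}\kappa^{-3/7}$ and $r=T^{5/7}\kappa^{-2/7}$ to balance the first three terms, yielding $E\gsim \bext\kappa^{3/7} T^{3/7} L^2$; otherwise take $r=(\kappa/8\bext)^{1/2}\ell$ saturating the admissibility constraint, then balance the remaining terms to obtain $E\gsim \bext^{2/3}\kappa^{2/3}T^{1/3}L^2$. The hypotheses $\kappa T\ge 1$ and $8\bext\le\kappa$ ensure the auxiliary factors like $(\kappa T)^{2/7}$ appearing in the minima are $\ge 1$, so each regime indeed produces the claimed scaling.

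The principal obstacle is verifying that the diffuse analogues of the interior, transport, and surface-energy estimates fit together with exactly the same algebraic structure as their sharp-interface counterparts, despite the weaker Meissner condition; but this has been absorbed into Lemmas \ref{lemmarhoB3}--\ref{lemmatransportlong}, where the extra $E(z)/\kappa$ and $E\|\nabla\psi\|_\infty$-type terms are all dominated, under \eqref{eqparameters} and \eqref{eqEzgoodequiptot}, by the dominant contributions already present in the sharp-interface balance. Once this is checked, the optimization is purely arithmetic and produces \eqref{scalingresult}.
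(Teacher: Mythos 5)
Your overall strategy is the one the paper uses: normalize $\rho\le 1$ via Lemma~\ref{lemmarhominoredi1}, dispose of the case where \eqref{eqEzgoodequiptot} fails, pick a good section $z$ with $E(z)\le E/T$, build $\psi$ via Lemma~\ref{lemmatestfunction}, and combine Lemmas~\ref{lemmainterior}, \ref{lemmatransportlong}, \ref{lemmaexterior} and \ref{lemmaequidistribution}(i) to drive the same min-optimization as in Theorem~\ref{theochilower1}. However, there is a genuine gap where you assert the resulting inequality is ``identical in structure to \eqref{eqfinalboundF}.'' It is not, and the paper explicitly flags the discrepancy.

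The source of the difference is Lemma~\ref{lemmatransportlong}, which (unlike its sharp-interface counterpart Lemma~\ref{lemmatransportsharp}) produces \emph{two} error terms:
\begin{equation*}
  \left|\int_{Q(z)}B_3\psi\,dx'-\int_{Q(0)}B_3\psi\,dx'\right|
  \lsim \left(\frac{\bext L^2T}{\kappa}\right)^{1/2}\|\nabla'\psi\|_{L^\infty}E^{1/2}
  + \|\nabla'\psi\|_{L^2(Q_L)}\,E^{1/2}\,.
\end{equation*}
You kept only the first. The second, $\|\nabla'\psi\|_{L^2}E^{1/2}\lsim(\bext L^2/(\kappa\ell^2))^{1/2}E^{1/2}$, is not of ``$E\|\nabla\psi\|_\infty$-type'' and cannot be swept into the errors you mention in your last paragraph; it contributes a \emph{new} competitor $\ell^2/T$ to the final minimum, so the correct form is
\begin{equation*}
  E\gsim \kappa\bext L^2T\,\min\left\{\frac{r^2}{T^2},\ \frac{\ell^2}{T},\ \frac{\ell^2}{rT},\ \frac{1}{\kappa\ell},\ \frac{\ell^2}{r^2}\right\}
\end{equation*}
(the paper's \eqref{eqfinalboundFc}, with the trivial constant term $1$ already dropped). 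Your list is missing $\ell^2/T$. This is not automatically harmless: one must check that it does not become the minimizer at the optimal $(r,\ell)$. The paper handles this explicitly, observing that averaging the first and sixth terms controls $\ell/T$, so $\ell^2/T$ is irrelevant once $\ell\gsim 1$, and then \emph{verifies} $\ell=(\kappa T)^{4/7}/\kappa\ge 1$ in the intermediate regime (using $\kappa\le 1\le \kappa T$); in the extreme regime the corresponding term is $(\kappa T)(\bext/\kappa)$, which is $\gsim 1$ by the same $(\kappa T)^{\alpha}(\bext/\kappa)^{\beta}$ inequality used for the other spurious terms. Without this check your proof is incomplete.

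A minor secondary point: your displayed inequality carries spurious extra $1/\kappa$ and $1/\kappa^{1/2}$ factors (e.g.\ $E(z)/\kappa^2$ where the paper has $E(z)/\kappa$, and $(\bext L^2T/\kappa^2)^{1/2}/r$ instead of $(\bext L^2T/\kappa)^{1/2}/r$). These do not match the final min you state, so they appear to be typos rather than conceptual errors, but as written the display would weaken the bound by factors of $\kappa$; the paper's careful bookkeeping shows none of these extra factors actually arise.
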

\begin{proof}
By Lemma \ref{lemmarhominoredi1} we can assume without loss of generality that $\rho\le 1$.
We can assume $B-\Bext\in L^2(Q_L\times\R;\R^3)$, otherwise the energy is infinite, so that
we can use Lemma \ref{lemmaHmeno12}.

If (\ref{eqEzgoodequiptot}) does not hold then 
$E\gsim \bext L^2 (\kappa T)^{3/7}$,
since $\kappa T\ge 1$, and the proof is concluded. 
Therefore we can assume that (\ref{eqEzgoodequiptot}) holds.
We choose  $z\in(0,T)$ such that
$E(z)\le E[u,A]/T$, so that in particular (\ref{eqEzgoodequip}) holds.
Let $\psi$ be the function 
constructed via Lemma \ref{lemmatestfunction} from 
the restriction to $Q(z)$ of $\chi=(1-\rho)^2$, 
as in Lemma \ref{lemmainterior}, for some parameters $r,\ell$ still to be chosen. 
We start from the identity
\begin{equation*}
 \int_{Q(z)} B_3\psi \,dx'= \int_{Q_L}\bext\psi \,dx'  + \left[\int_{Q(z)}B_3\psi\,dx'
  -\int_{Q(0)}B_3\psi\,dx'\right]+ \int_{Q(0)} (B_3-\bext)\psi\,dx'\,.\end{equation*}
From Lemma \ref{lemmatransportlong}
  \begin{equation*}
    \left|\int_{Q(z)}B_3\psi\,dx' - \int_{Q(0)}B_3\psi \,dx'\right|
    \lsim
\frac1r    \left(\frac{\bext L^2T}{\kappa}\right)^{1/2} 
E^{1/2}+ \left(\frac{\bext L^2}{\kappa \ell^2}\right)^{1/2} E^{1/2} \,,
  \end{equation*}
  where we used $\|\nabla'\psi\|_{L^\infty}\lsim 1/r$ and 
  $\|\nabla'\psi\|_{L^2(Q_L)}\le 
   \left(\frac{\bext L^2}{\kappa \ell^2}\right)^{1/2}$ (both obtained 
   from Lemma \ref{lemmatestfunction}(iv) and (v) and  Lemma \ref{lemmaequidistribution}(i)).
Analogously, Lemma \ref{lemmaexterior} shows that
  \begin{equation*}
    \int_{Q(0)} (B_3-\bext)\psi \,dx'\lsim  \left(\frac{r\bext L^2}{\kappa\ell^2
    }\right)^{1/2} E^{1/2}\,,
  \end{equation*}
  where we inserted the  bound on the $H^{1/2}$ norm of $\psi$ : 
  \begin{eqnarray*}
    \|\psi\|_{H^{1/2}(Q_L)}&\le& \|\psi\|_{L^2(Q_L)}^{1/2} 
\,\, \|\nabla \psi\|_{L^2(Q_L)}^{1/2}
\lsim \left(\frac{r\bext L^2}{\kappa\ell^2 }\right)^{1/2}\,.
  \end{eqnarray*} 
Further, by Lemma \ref{lemmatestfunction}(iii),
\begin{equation*}
    \int_{Q_L}\bext\psi \,dx'\le \bext \|\psi\|_{L^1(Q_L)}\le 4\bext\frac{r^2}{\ell^2}
    \int_{Q(z)}  \chi \,dx'
    \,,
\end{equation*}
therefore
\begin{align*}
 \int_{Q(z)} B_3\psi \,dx'\le&  4\bext\frac{r^2}{\ell^2}
    \int_{Q(z)}  \chi \,dx'+
c\frac1r    \left(\frac{\bext L^2T}{\kappa}\right)^{1/2} 
E^{1/2}\\
&+ c\left(\frac{\bext L^2}{\kappa \ell^2}\right)^{1/2} E^{1/2}+c\left(\frac{r\bext L^2}{\kappa\ell^2
    }\right)^{1/2} E^{1/2}\,.
\end{align*}
At this point we use Lemma \ref{lemmainterior}, which states that
   \begin{align*}
   \frac\kappa{\sqrt2} \int_{Q(z)}  \chi \,dx'
\le   \int_{Q(z)} B_3 \psi \,dx'+ c\frac{E(z)}{\kappa}+c\ell  E(z)
+  c\left(\frac{r^2\bext L^2 }{\ell^2\kappa}\right)^{1/2} E^{1/2} (z)\,.
\end{align*}
  Combining the previous estimate gives
  \begin{align*}
     \frac\kappa{\sqrt2} \int_{Q(z)}  \chi \,dx'
     \le& 4\bext\frac{r^2}{\ell^2}  \int_{Q(z)}  \chi \,dx'
     +c \frac1r\left(\frac{\bext L^2T}{\kappa}\right)^{1/2} 
E^{1/2}+ c \left(\frac{\bext L^2}{\kappa \ell^2}\right)^{1/2} E^{1/2} \\
&  +c   \left(\frac{r\bext L^2}{
    \kappa\ell^2}\right)^{1/2} E^{1/2} 
     +c\frac{E(z)}{\kappa}+c\ell  E(z)+c
 \left(\frac{r^2\bext L^2 }{\kappa\ell^2}\right)^{1/2} E^{1/2} (z)\,.
  \end{align*}
Assume now that 
\begin{equation}\label{eqdefadmissrlc}
  0<\ell\le r\le \left(\frac{\kappa}{8\bext}\right)^{1/2} \ell\,,
\end{equation}
so that the first term on the right is no larger than the one on the left divided by $\sqrt2$. 
Recalling Lemma \ref{lemmaequidistribution}(i) and $E(z)\le E/T$ we conclude 
 that for all pairs $(r,\ell)$ which obey 
(\ref{eqdefadmissrlc}), we have
  \begin{align*}
  \bext L^2 \lsim&\frac1r\left(\frac{\bext L^2T}{\kappa}\right)^{1/2} 
E^{1/2}+  \left(\frac{\bext L^2}{\kappa \ell^2}\right)^{1/2} E^{1/2} \\
&  +  \left(\frac{r\bext L^2}{
    \kappa\ell^2}\right)^{1/2} E^{1/2} 
     +\frac{E}{\kappa T}+\ell  \frac ET+
 \left(\frac{r^2\bext L^2 }{\kappa\ell^2}\right)^{1/2} \frac{E^{1/2}}{T^{1/2}}\,.
  \end{align*}
We remark that only the second and the fourth term are new with respect to the sharp-interface case.
  At least one of the six terms has to be at least one-sixth of the total, therefore
\begin{equation*}
  E\gsim \min \left\{
  \bext L^2\frac{r^2 \kappa}{T}, 
    \bext L^2\kappa \ell^2  ,
  \bext L^2 \frac{\kappa\ell^2}{r}, 
  \bext L^2\kappa T, 
  \bext L^2\frac{T}{\ell},
  \bext L^2\frac{\kappa\ell^2 T}{r^2}
  \right\}\,.
\end{equation*}
Equivalently, 
\begin{equation*}
  E\gsim \kappa \bext L^2T \min\left\{
  \frac{r^2 }{T^2},
  \frac{\ell^2}{T},
  \frac{\ell^2}{rT},
  1,
  \frac{1}{\kappa\ell}, 
  \frac{\ell^2}{r^2}
\right\}.
\end{equation*}
The fourth term can be dropped, since the sixth one is always less than
1.  
Therefore we can focus on 
\begin{equation}\label{eqfinalboundFc}
  E\gsim \kappa \bext L^2T 
  \min\left\{
  \frac{r^2 }{T^2},
  \frac{\ell^2}{T},
  \frac{\ell^2}{rT},
  \cdot,
  \frac{1}{\kappa\ell}, 
  \frac{\ell^2}{r^2}
\right\}\,,
\end{equation}
where a dot marks the term we already know to be irrelevant. 
In comparing with (\ref{eqfinalboundF}), we see  that the only new term is the second one, $\ell^2/T$.
%
Averaging the first and the sixth we see that $\ell/T$
would be irrelevant; hence the second term is irrelevant for all
choices of $\ell\gsim 1$. We shall see later that this is the case.

We finally have to choose $r$ and $\ell$, and check that in each case
some terms give the optimal bound, and the others are
irrelevant. Since we already know the scalings, we do not need to
check all possible combinations. 
Balancing the  first, third and  fifth term suggests the choice
\begin{equation*}
  \ell=T^{4/7}\kappa^{-3/7}\,, \hskip5mm r=T^{5/7}\kappa^{-2/7}\,.
\end{equation*}
This choice is admissible only if  (\ref{eqdefadmissrlc}) is satisfied. The 
first condition is always true, since $\kappa T\ge1$; the second one is equivalent to
$(\kappa T)^{1/7}\le  (\kappa/8\bext)^{1/2}$. 
Since $\kappa \le 1 \le \kappa T$, one can compute $\ell=(\kappa T)^{4/7}/\kappa\ge 1$,
hence the second term can indeed be dropped. In this case,
(\ref{eqfinalboundFc}) becomes
\begin{equation*}
  E\gsim
 \kappa^{3/7}\bext L^2T^{3/7} \min\left\{
 1,
 \cdot, 
 1, 
 \cdot,
 1, 
 (\kappa T)^{2/7}
 \right\}\,.
\end{equation*}
Since $\kappa T\ge1$, in the regime $(\kappa T)^{1/7}\le  (\kappa/8\bext)^{1/2}$
we have shown $E\gsim \kappa^{3/7}\bext L^2T^{3/7}$.

If instead $(\kappa T)^{1/7}\ge (\kappa/8\bext)^{1/2}$, we need to choose
$r= \ell (\kappa/8\bext)^{1/2}$. Then (\ref{eqdefadmissrlc}) is always satisfied. Balancing the first and fifth term in 
  (\ref{eqfinalboundFc}) with this constraint results, after some
  rearrangement, into 
  \begin{equation*}
    \ell = \frac{\bext^{1/3}T^{2/3}}{\kappa^{2/3}}
  \end{equation*}
and
\begin{equation*}
  E\gsim
 \kappa^{2/3}\bext^{2/3}L^2T^{1/3} \min\left\{
1, 
\frac{T\kappa}{\kappa}  \frac{\bext}{\kappa},
(\kappa T)^{1/3} \left(\frac{\bext}{\kappa}\right)^{7/6},
\cdot,
1,
(\kappa T)^{2/3} \left(\frac{\bext}{\kappa}\right)^{4/3}
\right\}\,.
\end{equation*}
Again, a  dot marks terms we already know to be irrelevant. We observe that since $\kappa T\ge1$ and 
 $(\kappa T)^{1/7}\ge(\kappa/8\bext)^{1/2}$ one has
 \begin{equation*}
   (\kappa T)^{\alpha} \left(\frac{\bext}{\kappa}\right)^{\beta}
   \gsim (\kappa T)^{\alpha-\frac27\beta} \gsim 1
 \end{equation*}
whenever $\alpha\ge\frac27\beta$. This permits to show that
the second, third and sixth terms do not contribute, and
therefore concludes the proof.
\end{proof}

\section{Upper bound}\label{secupperb}
Before presenting our construction for the energy $\EGL$ we construct
fields with optimal scaling for the sharp-interface functional $F$. 
We prove a refined version of the results of \cite{ChoksiKohnOtto2004}, 
giving a construction which satisfies several additional
properties, which will be needed in the following generalization to $\EGL$.
In particular, we need to make sure that there is an integer number of 
flux quanta in each flux tube and that the thickening of the tubes
on the scale of the correlation length still has small volume. The construction
for $\EGL$ will then be derived from this one.

\subsection{Construction with sharp interfaces}
\label{secconstrsharp}
The key point in the construction is to use in two stages 
a subdivision of the domain into subsets with integer flux. This is done
via the domain subdivision algorithm of  Lemma \ref{lemmasubdivideflux}.
We first subdivide the domain down to scale $L/N$, 
and obtain rectangles $(r_j)_{j=1\,\dots, N^2}$, which will 
be used to fix  the microstructure in the central section of the sample, $Q_L\times \{T/2\}$. 
In each of these rectangles, the magnetic
field will be concentrated in a smaller concentric  rectangle  $\hat r_j$, keeping the same flux. 
Then we subdivide a second time, again using  Lemma \ref{lemmasubdivideflux}, 
down to smaller rectangles, which will be the ones used close to the surfaces of the sample,
$Q_L\times\{0\}$ and $Q_L\times\{T\}$.
At the same time,
the total flux inside each rectangle is concentrated in a smaller rectangle, so that
the intensity of the magnetic field is the one preferred by the energy, $\kappa/\sqrt2$ (see Figure \ref{figpropchilower2}).
After this setup we will make the actual branching construction, which corresponds
to the subdivision
generated in the second application of Lemma \ref{lemmasubdivideflux}.

Before stating the main result of this section
we recall the definition of $F$ in  (\ref{eqdefF})
and introduce the notation $(\omega)_\rho$ for a $\rho$-neighbourhood of a set $\omega$. Precisely, 
for
$\omega \subset\R^3$ and $\delta>0$,
\begin{equation}\label{eqdefomegarho}
(\omega)_\delta:=\{x\in\R^3: \dist(x,\omega)<\delta\}=\bigcup_{x\in \omega} B_\delta(x)
\end{equation}
where the distance is 
interpreted $Q_L$-periodically, $\dist(x,A):=\inf\{|x-z-kL|: z\in A, k\in \Z^2\times\{0\}\}$.
\begin{theorem}\label{theochiupper1}
For any $\bext,\kappa, L,T>0$ such that
\begin{equation*}
2\bext\le \kappa\le \frac12\,,\hskip1cm
  \kappa T\ge 1 \,,\hskip1cm\bext L^2\in 2\pi \Z\,,
\end{equation*}
and
\begin{equation}\label{eqLadmissible}
 L\ge \min\left\{
 \frac{8T^{2/3}}{(\kappa \bext)^{1/6}}, 
  \frac{8T^{4/7}\kappa^{1/14}}{ \bext^{1/2}}
 \right\}
\end{equation}
there is a pair $(\chi,B)$, admissible in the sense of Definition \ref{defadmisssharp},
and such that
\begin{equation*}
 F[\chi,B]\lsim \min \left\{\bext \kappa^{3/7} T^{3/7} L^2, 
\bext^{2/3} \kappa^{2/3}  T^{1/3}L^2\right\}\,.
\end{equation*}
The set $\omega:=\{x\in Q_{L,T}: \chi(x)=1\}$  is formed by the union of finitely many sheared
parallelepipeds, with two faces normal to $e_3$, and
\begin{equation}\label{eqestchiuppernbd}
 |(\omega)_{1/\kappa}\setminus\omega| \lsim \frac{1}{\kappa}\int_{Q_{L,T}} |D\chi|\,.
\end{equation}
For any $z\in (0,T)$  the flux of $B$ across each
connected section of $\{x_3=z\} \cap \omega$ is an integer multiple of $2\pi$.
\end{theorem}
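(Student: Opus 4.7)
The plan is to construct $(\chi, B)$ explicitly as a self-similar branched microstructure, building $\omega = \{\chi = 1\}$ as a finite union of sheared parallelepipeds and defining $B_3 = \kappa/\sqrt 2$ inside $\omega$, $B = 0$ inside the sample but outside $\omega$, and $B$ as the $Q_L$-periodic harmonic extension in $\R^2 \times (\R\setminus(0,T))$ matching the trace of $B_3$ at $x_3 = 0, T$.

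First I set up the central cross-section $x_3 = T/2$ by applying the subdivision algorithm of Lemma \ref{lemmasubdivideflux} to partition $Q_L$ into $N^2$ rectangles $(r_j)$, each carrying integer flux $\bext|r_j| \in 2\pi\Z$; this uses the quantization hypothesis $\bext L^2 \in 2\pi\Z$ together with \eqref{eqLadmissible}, so that $N$ can be chosen in the optimal range. Inside each $r_j$ I select a concentric parallelepiped $\hat r_j$ of area $\sqrt 2\,\bext|r_j|/\kappa$, and set $\omega \cap Q(T/2) = \bigcup_j \hat r_j$. By the symmetry $x_3 \mapsto T - x_3$, it suffices to build the construction on $(T/2, T)$.

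For the branching I introduce heights $T/2 = z_0 < z_1 < \cdots < z_K = T$ with geometrically decreasing gaps and, at each level $k$, apply the subdivision algorithm again to split each parallelepiped from level $k-1$ into a fixed number $m$ of children of equal integer flux. In the layer $z_k < x_3 < z_{k+1}$, $\omega$ consists of sheared parallelepipeds linearly interpolating between the two cross-sections; inside each tube I take $B_3 = \kappa/\sqrt 2$ and $B'$ constant in $x_3$, determined by $\Div B = 0$ and the shear. The total energy then splits into the interfacial term $\kappa$ times total lateral area, a transport term $\int |B'|^2$ localized in the branching layers (scaling as shear squared times layer thickness), and the exterior term equal to $\|B_3(\cdot, T) - \bext\|_{H^{-1/2}(Q_L)}^2$, computed by periodic Fourier analysis. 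Balancing these in the number of levels $K$, the opening ratio of the geometric sequence, and the base scale $L/N$ yields the two scalings $\bext\kappa^{3/7}T^{3/7}L^2$ and $\bext^{2/3}\kappa^{2/3}T^{1/3}L^2$, with the transition at $\bext \sim \kappa^{5/7}/T^{2/7}$.

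The geometric properties are built into the construction: $\omega$ is by design a union of sheared parallelepipeds, and each branching step preserves integer flux quanta, so the integer quantization on each connected section of $\{x_3 = z\} \cap \omega$ is automatic. The estimate \eqref{eqestchiuppernbd} follows from the fact that at every level the tube cross-sections have diameter $\gsim 1/\kappa$, so the $1/\kappa$-tubular thickening of $\omega$ has volume comparable to the lateral surface area divided by $\kappa$. The main obstacle is coordinating the optimal energy scaling, integer flux quantization at every branching level, and tube cross-sections wide enough to yield \eqref{eqestchiuppernbd} simultaneously; this is what forces the two-stage use of Lemma \ref{lemmasubdivideflux} --- once for the coarse central partition and once inside each cell for the fine branching pattern toward the boundary --- and what gives rise to the specific size condition \eqref{eqLadmissible} on $L$.
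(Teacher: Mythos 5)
Your construction plan has the right backbone --- two-stage flux subdivision via Lemma~\ref{lemmasubdivideflux}, sheared tubes carrying field $\kappa/\sqrt 2$, geometric heights refining toward the boundary, and the $H^{-1/2}$/harmonic-extension treatment of the exterior field --- but as written it can only deliver the intermediate scaling $\bext^{2/3}\kappa^{2/3}T^{1/3}L^2$, not the extreme one $\bext\kappa^{3/7}T^{3/7}L^2$.

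The missing ingredient is a third, independent localization parameter. Once you fix the coarse partition scale $L/N$ and run a branching tree whose tubes eventually spread out to tile the \emph{entire} coarse cell, the three competing contributions are (interfacial) $\kappa^{1/2}\bext^{1/2}TN/L$, (transport) $\kappa\bext L^2/(N^2T)$, and (exterior) $\bext$; optimizing over $N$ alone gives exactly $(\kappa\bext)^{2/3}T^{1/3}$, and no choice of the number of levels $K$, the geometric ratio $\theta$, or the arity $m$ of the subdivision changes this --- those only affect constants, and the number of levels is in any case pinned down by the requirements $\kappa\rho_I\gsim 1$ (needed for~\eqref{eqestchiuppernbd}) and $t_I\gsim d_I$. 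To reach the extreme scaling when $\bext\lesssim\kappa^{5/7}T^{-2/7}$, you must allow the branching tree inside each coarse cell $R_j$ to live in a \emph{shrunk} concentric sub-rectangle $r_j$ of linear size $\gamma L/N$, $\gamma<1$. This makes the transport cost drop to $\kappa\bext L^2\gamma^2/(N^2T)$ at the price of a \emph{coarse} exterior modulation $\|\sum_j\hat b\,\uno_{r_j}-\bext\|^2_{H^{-1/2}}\sim\bext^2L^3/(N\gamma)$ per unit area, since the flux on the outer boundary is no longer uniformly distributed across $Q_L$. Balancing interfacial, transport and \emph{this} exterior term over both $N$ and $\gamma$ yields $\gamma\sim\bext^{1/2}T^{1/7}\kappa^{-5/14}$ and $N/L\sim\bext^{1/2}\kappa^{-1/14}T^{-4/7}$ and, finally, $\bext\kappa^{3/7}T^{3/7}L^2$. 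The two branches of the size condition~\eqref{eqLadmissible} then arise precisely from requiring $N\ge 1$ in the two regimes, and the constraint $\gamma\ge(\sqrt2\bext/\kappa)^{1/2}$ (so the flux-carrying sub-rectangle fits inside $r_j$) marks the crossover at $\bext\sim\kappa^{5/7}T^{-2/7}$. Without this $\gamma$ degree of freedom your energy balance has only one free knob and cannot produce both scalings.

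A smaller imprecision: Lemma~\ref{lemmasubdivideflux} produces children whose individual fluxes are integer multiples of $2\pi$, but not children of \emph{equal} flux (one cannot split an odd number of quanta evenly in two); and in the sheared-tube step $B'$ is not constant in $x_3$ since the tube cross-sections also rescale, not just translate. Neither affects the strategy, but the equal-flux wording would need to be dropped in a careful write-up.
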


\begin{figure}[t]
\begin{center}
 \includegraphics[width=8cm]{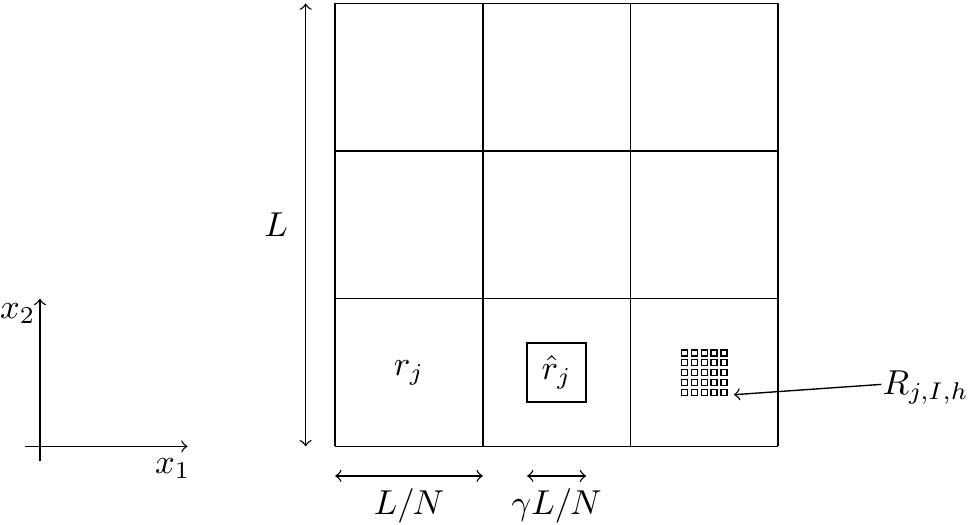} 
 \end{center}
\caption{Sketch of the notation used in the proof of Theorem \ref{theochiupper1}.
\label{figpropchilower2}}
 \end{figure}

Before giving the proof of Theorem \ref{theochiupper1}, we formulate and prove the partial results
that will be needed. We start with the domain subdivision.
Here we refine a flux pattern making sure that each component keeps the quantization condition. In order
for the field to maintain the optimal intensity, the areas are changed. Some parts of the construction
would be simpler if one would work with squares, but then the macroscopic distribution of the flux would be
modified. We work with rectangles, of aspect ratio uniformly close to 1, so that on any scale the perturbation
to the distribution of flux is kept to a minimum.
\begin{lemma}\label{lemmasubdivideflux}
 Let $R_{0,1}=(0,a)\times (0,b)$ and $B_\ast>0$ be such that $\frac 13 a\le b\le 3a$ and 
 $ab B_\ast\in 2\pi \Z$. 
 Then for any $k\in \N$ there are $4^k$ pairwise disjoint rectangles 
 $\{R_{k,i}\}_{i=1,\dots, 4^k}$, $R_{k,i}=x_{k,i}+(0,a_{k,i})\times(0, b_{k,i})$, 
 such that $\frac 13 a_{k,i}\le b_{k,i}\le 3a_{k,i}$, 
  each $R_{k,i}$ is the union of four
  $R_{k+1,i}$ (up to null sets), and
  $ a_{k,i}b_{k,i}B_\ast\in 2\pi\Z$ for all $k,i$.
 Further, $|a_{k,i}b_{k,i}-4^{-k}ab|\le 4\pi/B_\ast$.
\end{lemma}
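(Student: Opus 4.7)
My plan is to proceed by induction on $k$, with the base case $k=0$ reducing to the hypotheses on $R_{0,1}$. For the inductive step, each rectangle $R_{k,i}$ would be split independently into four children $R_{k+1,j}$ by a suitable partition.

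The basic single-step subdivision of a rectangle $R = x_0 + (0,\alpha) \times (0,\beta)$ with $\alpha\beta B_\ast = 2\pi n$ for some $n\in\N$ and $\alpha/3 \le \beta \le 3\alpha$ will be a near-uniform two-cut split. Assuming $\beta \ge \alpha$ (otherwise swap the coordinate roles), I would cut vertically at $\alpha_1$, chosen as the multiple of $\frac{2\pi}{\beta B_\ast}$ nearest to $\alpha/2$, and then cut each vertical strip horizontally at the multiple of $\frac{2\pi}{\alpha_1 B_\ast}$, respectively $\frac{2\pi}{(\alpha-\alpha_1) B_\ast}$, nearest to $\beta/2$. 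By construction each of the four pieces has integer flux. For the area bound, the rounding in the first cut forces the left strip to have flux $k$ closest to $n/2$ (so $|k-n/2|\le 1/2$), and the subsequent rounding in each strip forces each child to have flux $j$ closest to the half-strip flux (so $|j-k/2|\le 1/2$), giving $|n_j - n/4| \le 3/4$ for each child, hence $|a_{k+1,j}b_{k+1,j} - a_{k,i}b_{k,i}/4| \le 3\pi/(2B_\ast)$. Combined with the inductive hypothesis $|a_{k,i}b_{k,i} - 4^{-k}ab| \le 4\pi/B_\ast$ divided by $4$, this yields $|a_{k+1,j}b_{k+1,j} - 4^{-(k+1)}ab| \le \pi/B_\ast + 3\pi/(2B_\ast) \le 4\pi/B_\ast$, closing the induction for the area estimate.

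The main obstacle is preserving the aspect-ratio constraint $\tfrac13 \le b_{k,i}/a_{k,i} \le 3$. For a parent with $\rho = \beta/\alpha$ close to $3$, the two-cut split with the necessary roundings can produce a child whose aspect ratio exceeds $3$: the rounding that shrinks $\alpha_1$ while enlarging $\beta_1^L$ inflates the ratio by a factor up to $\frac{n}{n-1}\cdot\frac{k+1}{k}>1$. To handle this I would use instead a split perpendicular to the long axis -- a $1 \times 4$ partition with three parallel horizontal cuts at multiples of $\frac{2\pi}{\alpha B_\ast}$ chosen so that the strip heights correspond to fluxes close to $n/4$ -- whenever $\rho \gsim 4/3$. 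The resulting children have aspect ratio $\rho/4$ up to similar rounding corrections, safely in $[1/3, 3]$ for $\rho \in [4/3, 3]$, while for $\rho \le 4/3$ the $2\times 2$ grid leaves enough slack to absorb the rounding. A short case analysis covers the remaining combinations of small $n$ and extreme $\rho$. Finally, when $n \in \{1,2,3\}$ no proper four-piece integer-flux partition is possible; in that case I would set $R_{k+1,1} := R$ and let $R_{k+1,2}, R_{k+1,3}, R_{k+1,4}$ be null sets, consistent with the statement's ``up to null sets'' clause and trivially satisfying the aspect-ratio condition.
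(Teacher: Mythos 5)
The paper avoids a direct four-way split altogether: it first proves a binary version (Lemma~\ref{lemmasubdivideflux2}, two children per stage), always cutting perpendicular to the \emph{longer} side at $y=\frac{2\pi}{B_\ast a}\lfloor\frac{B_\ast ab}{4\pi}\rfloor$, and then obtains Lemma~\ref{lemmasubdivideflux} by simply reading off every other stage, $R_{k,i}:=R^*_{2k,i}$. The elementary inequality $\lfloor n/2\rfloor\ge n/3$ for $n\ge 2$ forces $b/3\le y\le b/2$, which together with $a\le b\le 3a$ immediately gives the aspect-ratio bound $\frac13 a\le y,\,b-y\le 3a$ for the two children. This makes the invariant trivially preserved; the area bound is a geometric-series telescoping. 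Your proposal is structurally different: it does a genuine four-way split per level, iterating on $k$.

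There is a genuine gap in the aspect-ratio argument. Your first cut is perpendicular to the \emph{short} side $\alpha$, which produces intermediate strips of aspect ratio $\approx 2\rho$; the repair by the second cut is then done with a coarsened flux quantum $\beta/k$ (rather than $\beta/n$), and the rounding error can push a child outside $[1/3,3]$. Your dichotomy ``$2\times 2$ grid for $\rho\lesssim 4/3$, $1\times 4$ slabs for $\rho\gtrsim 4/3$'' does not cover all cases. Concretely, take flux $n=7$ in a parent $R=(0,a)\times(0,2a)$ (so $\rho=2$). The slab split produces a slab of flux $1$, namely $a\times(2a/7)$, with aspect ratio $7/2>3$. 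The grid split: the vertical cut gives a strip of flux $3$ and width $3a/7$ (or its mirror image, $4a/7$ and flux $4$, which contains a flux-$3$ strip with the same problem); the horizontal cut in that strip must round $\beta/2=a$ to a multiple of $2a/3$, giving a flux-$2$ child of size $(3a/7)\times(4a/3)$ with aspect ratio $28/9>3$. So for $(n,\rho)=(7,2)$ both variants fail, and this is not an extreme corner of parameter space. The fix is exactly to reverse the order of cuts: cut perpendicular to the \emph{long} side first, so the intermediate strips already satisfy the aspect-ratio invariant, and the second cut is then the first cut applied to a good rectangle --- which is precisely the paper's nested binary lemma. The area bound and the handling of $n\le 3$ via null rectangles in your write-up are fine; the direction of the first cut is the substantive error.
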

\begin{proof}
 The lemma follows immediately from Lemma \ref{lemmasubdivideflux2}, if one ignores half of the stages.
 Precisely, if $(R^*_{k,i})_{i=1,\dots, 2^k}$ are the rectangles produced by  Lemma \ref{lemmasubdivideflux2}, 
 we set $R_{k,i}=R^*_{2k,i}$, for $i=1,\dots, 4^k=2^{2k}$.
\end{proof}
For the proof it is more convenient to focus on the following version, 
in which only one subdivision of each rectangle into two is performed at each step. To simplify
the notation, we say that a rectangle $R=x'+(0,a)\times (0,b)\subset\R^2$ is $B_\ast$-good, for some $B_\ast>0$, if
\begin{equation}
  \frac 13 a\le b\le 3a\hskip3mm \text{ and } \hskip3mm abB_\ast\in 2\pi \Z\,.
\end{equation}
\begin{lemma}\label{lemmasubdivideflux2}
 Let $B_*>0$, and let $R_{0,1}$ be a $B_\ast$-good rectangle.
 Then for any $k\in \N$ there are $2^k$ pairwise disjoint $B_\ast$-good rectangles 
 $\{R_{k,i}\}_{i=1,\dots, 2^k}$
 such that each $R_{k,i}$ is the union of two
  $R_{k+1,i}$ (up to null sets) and $||R_{k,i}|-2^{-k}|R_{0,1}||\le 4\pi/B_\ast$.
\end{lemma}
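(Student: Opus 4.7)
The plan is to argue by induction on $k$, with the only substantive content being a single subdivision step: given a $B_\ast$-good rectangle $R$ with flux $|R|B_\ast = 2\pi n$ and $n \ge 2$, I produce two $B_\ast$-good sub-rectangles whose areas differ from $|R|/2$ by at most $\pi/B_\ast$. The base case $k=0$ is trivial, and the inductive step applies the single-subdivision procedure to each of the $2^k$ rectangles at stage $k$.

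For the subdivision step, write $R = x' + (0,a) \times (0,b)$ and, after permuting the two horizontal coordinates if necessary, assume $b \ge a$, so that $b \in [a, 3a]$. The admissible cut heights $c \in (0,b)$ yielding two pieces of flux in $2\pi \Z$ are exactly $c = jb/n$ for $j = 1, \ldots, n-1$, since $abB_\ast = 2\pi n$ forces $ac B_\ast = 2\pi j$ and the complementary flux is then automatically integer. Picking $j$ closest to $n/2$ gives $|c - b/2| \le b/(2n)$, hence both pieces have area within $ab/(2n) = \pi/B_\ast$ of $|R|/2$.

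The delicate point, and the only real obstacle I anticipate, is verifying that each of the two new rectangles is again $B_\ast$-good, i.e.\ that the side ratio remains in $[1/3, 3]$. For $n = 2$ the only admissible choice is $c = b/2$ and the verification is immediate from $b \in [a, 3a]$. For $n \ge 3$ the chosen $c$ lies in $[b/3, 2b/3]$, and a short case distinction on whether $c \le a$ or $c \ge a$, together with $a \le b \le 3a$, shows that both $a \times c$ and $a \times (b-c)$ have shorter-to-longer side ratio in $[1/3, 1]$. In particular the inductive hypothesis ``$b \ge a$ up to relabelling'' is preserved at the next stage.

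Finally, for the cumulative area estimate, set $e_k := \max_i \bigl| |R_{k,i}| - 2^{-k}|R_{0,1}| \bigr|$. The single subdivision step yields the recursion $e_{k+1} \le e_k/2 + \pi/B_\ast$, which telescopes to $e_k \le 2\pi/B_\ast$, well within the stated bound of $4\pi/B_\ast$. The recursion can be iterated so long as each rectangle at the current stage still has flux at least $4\pi$; if some rectangle reaches flux $2\pi$ no further subdivision of that piece is needed in the application, and for larger $k$ the claimed bound on $|R_{k,i}|$ is in any case compatible with $|R_{k,i}| = 2\pi/B_\ast$.
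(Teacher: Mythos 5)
Your construction is essentially the paper's: cut the longer side of each rectangle at the admissible height $jb/n$ nearest $b/2$, observe that the cut lies in $[b/3,2b/3]$, check the aspect ratio by a case distinction on $c\lessgtr a$, and propagate the area error by a contraction-plus-constant recursion (your bound $2\pi/B_\ast$ is even a bit sharper than the paper's $4\pi/B_\ast$). The one place where the argument as written does not quite discharge the stated lemma is the treatment of the degenerate flux values: the lemma asserts the existence of \emph{exactly} $2^k$ pairwise disjoint $B_\ast$-good rectangles for \emph{every} $k\in\N$, not merely for $k$ small enough that all branches still have flux $\ge 4\pi$. Saying ``no further subdivision of that piece is needed in the application'' appeals to how the lemma will be used rather than proving what it claims; the paper resolves this by declaring that a rectangle of flux $2\pi$ splits into a copy of itself plus an empty rectangle, and an empty rectangle splits into two empty rectangles, so the count stays $2^k$ at every level. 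Once you insert that convention, your max-error recursion $e_{k+1}\le e_k/2+\pi/B_\ast$ continues to hold in those cases too (the two children have areas $s$ and $0$, each within $s/2\le\pi/B_\ast$ of $s/2$), so the rest of your argument goes through unchanged.
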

\begin{proof}
The construction is iterative, starting with $R_{0,1}$.
Consider one rectangle at step $k$, say,  $R_{k,i}=x_{k,i}+(0,a_{k,i})\times(0, b_{k,i})$.
Assume for definiteness that $a_{k,i}\le b_{k,i}$. 
If $R_{k,i}$ is empty, i.e., has side lengths zero,  we replace $R_{k,i}$ by two empty rectangles, 
setting $R_{k+1,2i}:=R_{k+1,2i+1}:=\emptyset$.
If $B_\ast a_{k,i}b_{k,i}=2\pi$, we replace it by a copy of itself and an empty rectangle,
setting $R_{k+1,2i}:=R_{k,i}$ and $R_{k+1,2i+1}:=\emptyset$.
Otherwise, we set 
\begin{equation*}
 y:=\frac{2\pi}{B_\ast a_{k,i}} \left\lfloor \frac{B_\ast a_{k,i}b_{k,i}}{4\pi}\right\rfloor
 \end{equation*}
and replace $R_{k,i}$ by the two rectangles
$x_{k,i}+(0,a_{k,i})\times (0,y)$ and 
$x_{k,i}+(0,a_{k,i})\times (y,b_{k,i})$. Here
$\lfloor t\rfloor=\max\{z\in\Z: z\le t\}$.

Clearly $y\le b_{k,i}/2$. At the same time, since for any $z\in\N$ with $z\ge 2$ one has
$\lfloor z/2\rfloor \ge z/3$, one has $y\ge b_{k,i}/3$.
Therefore the aspect ratio of the two new rectangles is also not larger than 3 and they are
$B_\ast$-good.

It remains to estimate the area.
Consider one rectangle $R_{k,i}$ at stage $k$. It has been generated
from $R_{0,1}$ by  $k$ subdivision steps. Let $s_h$ be the area of the
rectangle at stage $h$ along this subdivision path, so that $s_0:=|R_{0,1}|$. By the definition of $y$, 
we obtain $|2s_h-s_{h-1}|\le 4\pi/B_\ast$. Summing the series we obtain
\begin{equation*}
 |2^ks_k-|R_{0,1}|| \le \sum_{h=1}^k |2^h s_h-2^{h-1} s_{h-1}|
 \le \frac{4\pi}{B_\ast} \sum_{h=1}^k 2^{h-1} 
\le 2^k\frac{4\pi}{B_\ast} \,,
 \end{equation*}
which gives $|s_k-2^{-k}|R_{0,1}||\le 4\pi/B_\ast$.
\end{proof}

We next estimate the energy of a flux configuration on the boundary. We assume that the magnetic
field inside each rectangle $R_j$ is concentrated in a subrectangle $r_j$. The difference
of the fields $a_j\uno_{r_j}-A_j\uno_{R_j}$ then has average zero over the larger rectangle $R_j$.
Here and below
 we denote by $\uno_E$ the characteristic function of a set $E$,
$\uno_E(x)=1$ if $x\in E$, $0$ otherwise. 
\begin{lemma}\label{lemmaestimatehminus12}
For $M\in\N$ let
\begin{equation*}
 \{R_1,\dots,  R_{M}\}\hskip5mm
 \text{ and }\hskip5mm
\{r_1,\dots,  r_{M}\}
 \end{equation*}
be rectangles
 such that  each of them has aspect ratio not larger than 3, $r_j\subset R_j\subset Q_L$, 
with the $R_j$ pairwise disjoint.
Let $a_j, A_j\in\R$ be such that $a_j| r_j|=A_j|R_j|$.
Then
\begin{equation*}
 \left\| \sum_{j=1}^M  a_j \uno_{r_j}-A_j\uno_{R_j}\right\|_{H^{-1/2}(Q_L)}^2 \lsim \sum_{j=1}^M a_j^2 \, |r_j|^{3/2}\,.
\end{equation*}
\end{lemma}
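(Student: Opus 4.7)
The plan is to use a divergence-free extension characterization of $\|\cdot\|_{H^{-1/2}(Q_L)}$: for any $Q_L$-periodic $V\in L^2(Q_L\times(-\infty,0);\R^3)$ with $\Div V=0$ distributionally, compactly supported in $x_3$, and $V\cdot e_3|_{x_3=0}=f$ in the trace sense, one has $\|f\|_{H^{-1/2}(Q_L)}\le\|V\|_{L^2}$. Indeed, if $\Psi$ is the harmonic extension of a test function $\psi$, integration by parts combined with $\Div V=0$ gives $\int_{Q_L}f\psi\,dx'=\int_{Q_L\times(-\infty,0)}V\cdot\nabla\Psi\,dx$, which is bounded by $\|V\|_{L^2}\|\psi\|_{H^{1/2}(Q_L)}$ after optimizing over the extension.

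I will construct $V=\sum_{j=1}^M V_j$ where each $V_j$ is divergence-free, satisfies $V_j\cdot e_3|_{x_3=0}=f_j:=a_j\uno_{r_j}-A_j\uno_{R_j}$, and has $x'$-support contained in $R_j$. Since the $R_j$ are pairwise disjoint in $Q_L$, the $V_j$ then have pairwise disjoint supports in $Q_L\times(-\infty,0)$, so $\|V\|_{L^2}^2=\sum_j\|V_j\|_{L^2}^2$. The lemma therefore reduces to the single-tube estimate $\|V_j\|_{L^2}^2\lsim a_j^2|r_j|^{3/2}$.

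To construct $V_j$ I employ a dyadic scheme. Set $L_r:=|r_j|^{1/2}$, $L_R:=|R_j|^{1/2}$, and let $K$ be the smallest integer with $4^K|r_j|\ge|R_j|$. Choose nested rectangles $r_j=r_j^{(0)}\subset r_j^{(1)}\subset\dots\subset r_j^{(K)}=R_j$ with bounded aspect ratios and $|r_j^{(k)}|\sim 4^k|r_j|$ for $k\le K-1$, for instance by linearly interpolating the vertices of $r_j$ and $R_j$. For each $k$ define the profile $h_j^{(k)}:=\alpha_k\uno_{r_j^{(k)}}-A_j\uno_{R_j}$ with $\alpha_k:=a_j|r_j|/|r_j^{(k)}|$, so that $h_j^{(0)}=f_j$, $h_j^{(K)}=0$, and each $h_j^{(k)}$ has mean zero on $R_j$. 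Partition the depth into zones $Z_k:=\{-d_k\le x_3\le-d_{k-1}\}$ with $\Delta_k:=d_k-d_{k-1}\sim 2^{k-1}L_r$ (so $d_K\sim L_R$). In $Z_k$ take $V_{j,3}$ to interpolate linearly in $x_3$ from $h_j^{(k-1)}$ to $h_j^{(k)}$, and set $V_j'(x',x_3):=\nabla\varphi_j^{(k)}(x')$ where $\varphi_j^{(k)}$ solves the Neumann problem $-\Delta\varphi_j^{(k)}=(h_j^{(k-1)}-h_j^{(k)})/\Delta_k$ on $r_j^{(k)}$. This is solvable since the right-hand side has mean zero and is supported in $r_j^{(k)}$, and the resulting $V_j$ is divergence-free distributionally on $Q_L\times(-\infty,0)$ and vanishes for $x_3<-d_K$.

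A direct computation gives $\|h_j^{(k)}\|_{L^2}^2\sim a_j^2|r_j|/4^k$ for $k\le K$, and the Neumann--Poincar\'e inequality on $r_j^{(k)}$ (of diameter $\sim 2^k L_r$) yields $\|\nabla\varphi_j^{(k)}\|_{L^2}^2\lsim a_j^2 L_r^2/4^k$. Combining these one finds
\begin{equation*}
\|V_{j,3}\|_{L^2(Z_k\cap Q_L)}^2+\|V_j'\|_{L^2(Z_k\cap Q_L)}^2\lsim 2^{-k}\,a_j^2\,L_r^3,
\end{equation*}
and summing the geometric series in $k$ gives $\|V_j\|_{L^2}^2\lsim a_j^2 L_r^3=a_j^2|r_j|^{3/2}$. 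The main obstacle is precisely this multi-scale construction: a naive single-scale extension (using depth $L_r$ or depth $L_R$ throughout) inflates either the horizontal or the vertical $L^2$ norm by a factor of order $|R_j|/|r_j|$; the dyadic telescoping between the mean-zero profiles $h_j^{(k)}$ is what balances these two effects at every scale.
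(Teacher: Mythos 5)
Your argument is correct in spirit and genuinely different from the paper's. The paper proves the estimate by duality: for $g:=a_j\uno_{r_j}-A_j\uno_{R_j}$, which has mean zero over $R_j$, and any test extension $\Phi\in H^1_\loc(Q_L\times(0,\infty))$, one writes $\int_{R_j} g\,\Phi=\int_{r_j} a_j(\Phi-\varphi_j)$ with $\varphi_j$ the mean of $\Phi$ on $R_j$, and then invokes the $H^1\to L^4$ trace embedding in three dimensions to get $\|\Phi-\varphi_j\|_{L^1(r_j)}\le|r_j|^{3/4}\|\Phi-\varphi_j\|_{L^4(r_j)}\lsim|r_j|^{3/4}\|\nabla\Phi\|_{L^2(R_j\times(0,\infty))}$; Cauchy--Schwarz over $j$ (using disjointness of the $R_j$) finishes. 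Your approach instead \emph{constructs} a divergence-free extension $V$ with $\|f\|_{H^{-1/2}}\le\|V\|_{L^2}$, localizing each $V_j$ horizontally in $R_j$ (so the $L^2$ norms add) and telescoping through a dyadic hierarchy of mean-zero profiles $h_j^{(k)}$ with Neumann auxiliary problems at each scale. Both are valid and give the same exponent. The paper's route is shorter — a two-line duality estimate replacing your geometric-series construction — and avoids the extension altogether. Your route is more explicit and reveals the right depth scaling ($\sim 2^k L_r$ at stage $k$), which could be useful if one wanted $V$ itself for something else; it is in fact very close in spirit to the branching extensions the paper builds later via Lemma \ref{lemmah12} and Lemma \ref{lemmatransformrectintorect}.

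Two small points to tighten in your write-up. First, ``linearly interpolating the vertices'' does not give $|r_j^{(k)}|\sim 4^k|r_j|$; you want geometric, not linear, growth of the side lengths, and you should say a word about why a nested family with doubling sides, bounded aspect ratio and $r_j^{(K)}=R_j$ exists inside $R_j$ (it does, since $4|r_j^{(k)}|\le|R_j|$ for $k<K$ leaves room to translate so as to preserve both the inclusion $r_j^{(k-1)}\subset r_j^{(k)}\subset R_j$ and aspect ratio $\le 3$, but it is not completely automatic). Second, the Neumann--Poincar\'e constant you use scales like $\mathrm{diam}(r_j^{(k)})$ \emph{only because} the aspect ratio is uniformly bounded; this is exactly the reason the bounded-aspect-ratio hypothesis appears in the lemma, and it deserves a sentence so that the reader sees where the hypothesis enters (in the paper's proof it enters analogously through the trace/Poincar\'e constant on $R_j$).
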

\begin{proof}
Fix one index $j$, and let $g:=a_j \uno_{r_j} -A_j\uno_{R_j}$. 
Since $g$ has average 0 over $R_{j}$, for any $\varphi\in L^1(R_{j})$ we have
\begin{equation*}
\left|  \int_{R_{j}} g \varphi\, dx' \right|=
 \left| \int_{R_{j}} g (\varphi-\varphi_0) dx'\right| =
  \left|\int_{r_j} a_j (\varphi-\varphi_0) dx' \right|
  \le |a_j| \, \|\varphi-\varphi_0\|_{L^1(r_j)}\,,
  \end{equation*}
  where $\varphi_0$ is the average of $\varphi$ over $R_{j}$.
  Since the trace of a $H^1$ function (in three dimensions) belongs to $L^4$, 
  if $\varphi\in H^1_\loc(R_j\times(0,\infty))$ we obtain
\begin{equation*}
\|\varphi-\varphi_0\|_{L^1(r_j)}\le
| r_j|^{3/4}\|\varphi-\varphi_0\|_{L^4( r_j)}\lsim
| r_j|^{3/4}\|\nabla \varphi\|_{L^{2}(R_{j}\times(0,\infty))} \,.
\end{equation*}
Since the aspect ratio of the rectangles is controlled, the constant is universal.

Now fix $\Phi\in H^{1}_\loc(Q_L\times(0,\infty))$, $Q_L$-periodic. Let $\varphi_j$ be the average of $\Phi$ over $R_j$. Then the 
same computation gives
\begin{align*}
\left|  \int_{Q_L}\sum_j \left( a_j \uno_{r_j}-A_j\uno_{R_j}\right)\Phi\, dx' \right|
&\le \sum_j   \left|\int_{r_j} a_j (\Phi-\varphi_j) dx' \right|
  \lsim\sum_j |a_j| \, |r_j|^{3/4} \|\nabla \Phi\|_{L^{2}(R_{j}\times(0,\infty))}\\
&  \le \left(\sum_j a_j^2 \, |r_j|^{3/2}\right)^{1/2}  \|\nabla \Phi\|_{L^{2}(Q_L\times(0,\infty))}\,,
  \end{align*}
where in the last step we used Cauchy-Schwarz.
\end{proof}
Controlling the $H^{-1/2}$ norm of the normal component of $B$
on the boundary is sufficient to estimate the energetic cost of the
magnetic field outside the sample. We recall this general fact in the following Lemma.
\begin{lemma}\label{lemmah12}
 Let $g\in L^2_\loc(\R^2)$, $Q_L$-periodic, with average $\bext\in\R$. Then there is $B\in L^2_\loc(\R^3;\R^3)$,
 also $Q_L$-periodic, such that $\Div B=0$, $B_3(x',x_3)=g(x')$ for $x_3\ge 0$, and
 \begin{equation*}
  \int_{Q_L\times(-\infty,0)} |B-\bext e_3|^2dx\lsim \|g-\bext\|_{H^{-1/2}(Q_L)}^2\,.
 \end{equation*}
\end{lemma}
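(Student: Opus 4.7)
The plan is to split the construction into the two half-spaces. In the upper half-space $\{x_3>0\}$ there is nothing to do: the field $B(x):=g(x')e_3$ is automatically divergence-free (since $g$ depends only on $x'$) and satisfies $B_3(x',x_3)=g(x')$, at no cost in the exterior energy that we need to control (the bound is only required in the lower half-space). The whole content of the lemma is thus the construction of an extension into $\{x_3<0\}$ whose normal trace matches $g$ and whose $L^2$-distance to $\bext e_3$ is controlled by $\|g-\bext\|_{H^{-1/2}(Q_L)}$.

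Let $h:=g-\bext$, a $Q_L$-periodic, zero-mean function. I would obtain the lower-half-space field as $\bext e_3+\nabla\varphi$, where $\varphi$ solves the weak Neumann problem
\begin{equation*}
\int_{Q_L\times(-\infty,0)}\nabla\varphi\cdot\nabla v\,dx=\int_{Q_L}h\,v(\cdot,0)\,dx'\qquad \forall v.
\end{equation*}
Existence and uniqueness (modulo constants) follow from the Riesz representation theorem on the Hilbert space of $Q_L$-periodic $v\in H^1_\loc(Q_L\times(-\infty,0))$ with $\nabla v\in L^2$, equipped with the inner product $\int\nabla u\cdot\nabla v$. The right-hand side is a bounded linear functional on this space, because by the paper's definition $\|v(\cdot,0)\|_{H^{1/2}(Q_L)}\le\|\nabla v\|_{L^2}$ (the homogeneous $H^{1/2}$ norm is the infimum of $\|\nabla v\|_{L^2}$ over extensions), so duality gives $|\int h\,v(\cdot,0)|\le \|h\|_{H^{-1/2}(Q_L)}\|\nabla v\|_{L^2}$. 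The usual solvability condition $\int h\,dx'=0$ is ensured by the choice $\bext=\fint g$.

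Testing the weak formulation with $v=\varphi$ and using the same duality bound yields
\begin{equation*}
\|\nabla\varphi\|_{L^2(Q_L\times(-\infty,0))}^2=\int_{Q_L}h\,\varphi(\cdot,0)\,dx'\le\|h\|_{H^{-1/2}(Q_L)}\|\nabla\varphi\|_{L^2},
\end{equation*}
which, after cancellation, gives the desired energy estimate $\|B-\bext e_3\|_{L^2(Q_L\times(-\infty,0))}^2\lsim\|h\|_{H^{-1/2}(Q_L)}^2$. It remains to check that the piecewise-defined $B$ is globally divergence-free. On each open half-space this is immediate (above by inspection, below because the weak formulation is the Euler--Lagrange equation $\Delta\varphi=0$). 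Across $\{x_3=0\}$, one reads off from the weak formulation that $\partial_3\varphi|_{x_3=0^-}=h$ as a normal trace, hence $B_3|_{x_3=0^-}=\bext+h=g=B_3|_{x_3=0^+}$; continuity of the normal component together with vanishing divergence in each half-space gives $\Div B=0$ on all of $\R^3$ in the distributional sense.

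I do not expect a serious obstacle: the argument is essentially the variational/duality characterization of the Dirichlet-to-Neumann map. The only point that requires a moment of care is the bookkeeping at the interface, namely verifying that the Neumann trace implicit in the weak formulation is exactly $h$ and thus matches the trivial upper extension, so that the global distributional divergence-free property is genuinely inherited from the two separate half-space statements.
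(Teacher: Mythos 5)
Your proof is correct, and it takes a genuinely different, variational route where the paper's own proof proceeds by an explicit Fourier computation. The paper reduces to $\bext=0$, expands $g$ in Fourier modes $\hat g(k')$ (noting $\hat g(0)=0$), and for $x_3\le0$ simply writes down the harmonic extension $\hat B_3(k',x_3)=\hat g(k')e^{|k'|x_3}$, $\hat B'(k',x_3)=i\,k'|k'|^{-1}\hat g(k')e^{|k'|x_3}$, deducing the $L^2$ bound from the Fourier characterization $\sum_{k'\ne0}|\hat g(k')|^2/|k'|\sim\|g\|_{H^{-1/2}(Q_L)}^2$. Your field $B=\bext e_3+\nabla\varphi$, with $\varphi$ the Riesz-representation solution of the half-space Neumann problem, is exactly that same object (the Fourier formula is its explicit kernel representation), so the two constructions produce the same $B$. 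What your route buys is that the estimate $\|\nabla\varphi\|_{L^2}\le\|g-\bext\|_{H^{-1/2}(Q_L)}$ drops out in one line from the paper's own definition of the homogeneous $H^{1/2}$ norm as the infimum of Dirichlet energies over extensions, with no appeal to the Fourier characterization of the fractional norm; you also make explicit the compatibility condition $\int_{Q_L}(g-\bext)\,dx'=0$ required for solvability and the distributional gluing across $\{x_3=0\}$ via continuity of the normal trace, which the paper compresses into ``it is then straightforward to check.'' The Fourier route, conversely, is concrete and self-contained, and makes decay and convergence of the extension manifest by inspection.
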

\begin{proof}
 It suffices to consider the case $\bext=0$. Let $\hat g(k')$ be the Fourier coefficients of $g$,
 so that $g(x')=\sum_{k'\in 2\pi\Z^2/L} e^{ik'\cdot x'} \hat g(k')$
 and $\sum_{k'\ne0} |\hat g|^2(k')/|k'|\sim \|g\|_{H^{-1/2}}^2$.
 Since $g$ has average $0$, $\hat g(0)=0$.
 We define, for $x_3\le0$,
 \begin{equation*}
  B(x',x_3):=\sum_{k'\in 2\pi\Z^2/L} e^{ik'\cdot x'} \hat B(k',x_3)
 \end{equation*}
where
 \begin{equation*}
 \hat B_3(k',x_3):=\hat g(k') e^{|k'|x_3}\,, \hskip5mm
 \hat B'(k',x_3):=i\frac{k'}{|k'|} \hat g(k') e^{|k'|x_3}\,. 
 \end{equation*}
  It is then straightforward to check that the stated properties are satisfied.
\end{proof}

Before starting the construction in the interior region, we introduce a separate notation for the interior contribution to the energy.
We define, for $\Omega\subset\R^3$ open, $\chi\in BV(\Omega)$ and $B\in L^2(\Omega;\R^3)$,
\begin{equation}\label{eqdefFint}
  F^\Int[\chi,B,\Omega] := \int_{\Omega}  \kappa |D\chi|  +\int_{\Omega} \left[|B'|^2 + \chi\left(B_3
  - \frac{\kappa}{\sqrt2}\right)^2 \right]dx,
    \end{equation}
so that $F[\chi,B]=F^\Int[\chi,B,Q_{L,T}]+ \int_{Q_L\times[\R\setminus(0,T)]} |B-\bext e_3|^2 dx$.
By Lemma \ref{lemmah12} it suffices to control 
$F^\Int[\chi,B,Q_{L,T}]+\|B-\bext\|_{H^{-1/2}(Q_L\times\{0\})}^2+\|B-\bext\|_{H^{-1/2}(Q_L\times\{T\})}^2$.

The next construction step is a procedure to generate an admissible $\chi$ and $B$  with given boundary data 
in a slab $\R^2\times (0,t)$. The explicit construction is done for the case that the boundary data
are characteristic functions of rectangles. An extension to the case where circles are used,
which gives a smaller surface energy  (by a factor which does not affect the scaling), 
is discussed in \cite{ContiGoldmanOttoSerfaty}. 
\begin{lemma}\label{lemmatransformrectintorect}
 Let $r:=p+(0,a)\times (0,b)$,
 $\hat r:=\hat p+(0,\hat a)\times (0,\hat b)\subset\R^2$ be two rectangles with $|r|=|\hat r|$, $a\sim b$ and $\hat a\sim \hat b$.
 For any $t\gsim a$ there are $(\chi,B):\R^2\times[0,t]\to\{0,1\}\times \R^3$ such that
 \begin{equation*}
  B_3(\cdot, \cdot, 0)=\frac\kappa{\sqrt2}\uno_r\,,\hskip1cm
 B_3(\cdot, \cdot, t)=\frac\kappa{\sqrt2}\uno_{\hat r}\,\,,\hskip1cm \Div B=0\,,
 \end{equation*}
 the first two in the sense of traces, and such that, defining $\omega:=\{x: B(x)\ne 0\}$,
\begin{equation*}
  B_3=\frac\kappa{\sqrt2}\chi=\frac\kappa{\sqrt2}\uno_\omega \,,\hskip3mm 
   F^\Int[\chi,B,\R^2\times(0,t)]\lsim  
   \kappa a t+   \kappa a |p-\hat p|+\kappa^2
   \frac{|p-\hat p|^2+ab}{t} ab\,.
\end{equation*}
If $R=(x_1,y_1)\times(x_2,y_2)$ is such that $r\cup\hat r\subset R$, 
then $\omega\subset R\times[0,t]$. 
For any $\delta\lsim a$,
\begin{equation*}
 |(\omega)_\delta\setminus \omega|\lsim \delta a t
\end{equation*}
with $(\omega)_\delta$ defined as in (\ref{eqdefomegarho}).
\end{lemma}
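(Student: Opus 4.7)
The plan is to realize $\omega$ as a vertical tube whose horizontal cross-section $r(z)$ is a rectangle of constant area $ab$, interpolating from $r(0)=r$ to $r(t)=\hat r$, and to take $B := (\kappa/\sqrt 2)(V,1)\uno_\omega$ where $V$ is the in-plane velocity of an area-preserving affine flow that sweeps out $\omega$. The principal technical difficulty is reconciling three requirements simultaneously: area preservation ($|r(z)|=ab$, forced by $\Div B=0$ together with $B_3=(\kappa/\sqrt 2)\uno_\omega$), containment in $R$, and clean control of $V$. I address this by an inscribe-and-rescale construction. Let $c(z)$ and $\ell_i(z)$ denote the linear interpolations of the centers and side lengths of $r$ and $\hat r$, so that the bounding rectangle $c(z)+[-\ell_1/2,\ell_1/2]\times[-\ell_2/2,\ell_2/2]$ lies in $R$ by coordinate-wise convexity. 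Set $\sigma(z):=\sqrt{ab/(\ell_1(z)\ell_2(z))}\in(0,1]$ and
\[
 r(z):= c(z) + \sigma(z)\Bigl(\bigl[-\tfrac{\ell_1(z)}{2},\tfrac{\ell_1(z)}{2}\bigr]\times\bigl[-\tfrac{\ell_2(z)}{2},\tfrac{\ell_2(z)}{2}\bigr]\Bigr).
\]
The hypotheses $a\sim b$, $\hat a\sim\hat b$ and $ab=\hat a\hat b$ give $\ell_1\sim\ell_2\sim a$ uniformly in $z$, and hence $\sigma\sim 1$; defining $\omega:=\{(x',z):x'\in r(z),\,z\in(0,t)\}$ and $\chi:=\uno_\omega$ then gives the containment in $R\times[0,t]$ and $|r(z)|=ab$ for free.

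Next I would define the affine flow $\Phi_z\colon r\to r(z)$ that translates by $c(z)-c(0)$ and scales the two axes by $\sigma(z)\ell_1(z)/a$ and $\sigma(z)\ell_2(z)/b$; since $\det\nabla_y\Phi_z=\sigma^2\ell_1\ell_2/(ab)\equiv 1$, its Eulerian velocity $V(x',z):=\partial_z\Phi_z(\Phi_z^{-1}(x'))$ satisfies $\Div' V=0$ by Liouville's formula. Setting $B:=(\kappa/\sqrt 2)(V,1)\uno_\omega$ gives $\Div B=0$ in the interior of $\omega$, and across the lateral boundary $B$ is tangent to the sweeping surface (which has tangent direction $(V,1)$), so no distributional jump arises; trivially $\Div B=0$ outside $\omega$. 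The trace conditions at $z=0,t$ follow from $r(0)=r$, $r(t)=\hat r$, and by construction $B_3=(\kappa/\sqrt 2)\uno_\omega$, which makes the term $\chi(B_3-\kappa/\sqrt 2)^2$ vanish identically.

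For the energy estimate, differentiating the flow yields $|\dot c|\lesssim (|p-\hat p|+a)/t$ and time-derivatives of the scaling matrix of order $1/t$, so $|V|\lesssim(|p-\hat p|+a)/t$ throughout $\omega$. Since $|\omega|=abt$ and $a\sim b$, this gives
\[
 \int_\omega |B'|^2\,dx \,\lesssim\, \kappa^2\,\frac{|p-\hat p|^2+a^2}{t}\,ab \,\sim\, \kappa^2\,\frac{|p-\hat p|^2+ab}{t}\,ab.
\]
For the perimeter, the two horizontal faces contribute $2ab$, while the lateral contribution is
\[
 \int_0^t\mathrm{Per}(r(z))\sqrt{1+|V|^2}\,dz\,\lesssim\, at + a|p-\hat p|;
\]
multiplying by $\kappa$ and absorbing $\kappa ab$ into $\kappa at$ via $t\gtrsim a$ and $a\sim b$ yields the claimed bound for $\kappa\int|D\chi|$.

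The tubular neighbourhood bound $|(\omega)_\delta\setminus\omega|\lesssim\delta\,\mathrm{Per}(\omega)\lesssim\delta at$ for $\delta\lesssim a$ then follows from the perimeter estimate together with $ab\lesssim at$, in the regime $|p-\hat p|\lesssim t$ that is relevant for the branching application. The main obstacle is the first step: the two naive strategies ("stretch while centered, then translate" and "linearly interpolate corners") either violate containment or fail to preserve area. The inscribe-and-rescale trick resolves this by fixing a bounding box that lies in $R$ by convexity and then shrinking by a scalar factor $\sigma\sim 1$, which restores area preservation while leaving the velocity and perimeter estimates essentially unchanged.
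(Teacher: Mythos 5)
Your proof is correct and realizes the lemma via a genuinely different explicit flow than the paper's. The paper uses the single area-preserving map
\begin{equation*}
  v(x)=\tfrac{x_3}{t}(\hat p-p,0)+\bigl(x_1\varphi(x_3),\,x_2/\varphi(x_3),\,x_3\bigr),
  \qquad \varphi(x_3)=(\hat a/a)^{x_3/t},
\end{equation*}
so that $\det\nabla v\equiv1$ comes for free from $\varphi\cdot(1/\varphi)=1$ and containment of $\omega$ in $R\times[0,t]$ follows from the convexity of the exponential (this also requires, as written, that $p=0$ so that the boundary value at $x_3=t$ really is $\hat r$; the general case needs a preliminary translation). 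You instead interpolate centers $c(z)$ and side lengths $\ell_i(z)$ linearly, get containment in $R$ immediately from coordinate-wise convexity of affine functions, and then restore area preservation with the scalar shrinking factor $\sigma(z)=\sqrt{ab/(\ell_1\ell_2)}\le1$, which is $\sim 1$ thanks to $a\sim b$, $\hat a\sim\hat b$, $ab=\hat a\hat b$. Your inscribe-and-rescale step is the nontrivial new idea replacing the paper's exponential ansatz; after that both arguments produce an affine flow with $\det=1$, hence $\Div'V=0$ by Liouville, the same velocity bound $|V|\lsim(|p-\hat p|+a)/t$, the same perimeter bound, and the same energy estimate, so the two routes are of comparable difficulty — yours perhaps slightly more transparent on containment, the paper's slightly shorter because area preservation is built into the ansatz. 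You also correctly flag that the tubular-neighbourhood bound $|(\omega)_\delta\setminus\omega|\lsim\delta at$ uses $|p-\hat p|\lsim t$; this hidden hypothesis is present equally (and silently) in the paper's terse ``one easily obtains'' and is satisfied in the one place the lemma is applied, namely inside Lemma \ref{lemmaconstrinternal} where $|p-\hat p|\lsim d_i\lsim t_i$, so it is a cosmetic defect of the lemma's statement rather than a gap in your proof.
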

\begin{proof} 
 We define $\varphi:\R\to\R$ by
 \begin{equation*}
  \varphi(x_3):= \exp\left( \frac{x_3}{t} \ln\frac{\hat a}{a}\right)
= \left(\frac{\hat a}{a}\right)^{x_3/t}\,.\end{equation*}
Then $\varphi(0)=1$, $\varphi(t)=\hat a/a=b/\hat b$, $|\varphi'|(x_3)\lsim 1/t$ for $x_3\in[0,t]$.
Further, we define
\begin{equation*}
 v(x):=
  \frac{x_3}{t}
  \begin{pmatrix}
 \hat p_1-p_1\\ \hat p_2-p_2\\0  
  \end{pmatrix}
+    
  \begin{pmatrix}
  x_1\varphi(x_3) \\ x_2/\varphi(x_3) \\ x_3
      \end{pmatrix}\,,
\end{equation*}
which is a diffeomorphism of $\R^2\times[0,t]$ into itself, with $\det \nabla v=1$ pointwise,
and finally  define $\chi$ and $B$ by
\begin{equation}\label{eqdefBv}
\chi(v(x)):=\uno_r(x')\,, \hskip2mm
B_3(v(x)):=\frac\kappa{\sqrt2}\uno_r(x')\,, \hskip2mm
B'(v(x)):=\frac\kappa{\sqrt2}\uno_r(x')\partial_3 v'(x)\,.
\end{equation}
Let $\theta\in C^1_c(\R^2\times (0,t))$. By a change of variables
\begin{equation*}
  \int_{\R^2\times(0,t)} (\partial_3\theta\, B_3 + \nabla'\theta \cdot B' )dx
  =\int_{\R^2\times(0,t)} (\partial_3\theta \circ v \, B_3\circ v + \nabla'\theta \circ v \cdot B'\circ v )dx\,.
\end{equation*}
Inserting the definition from (\ref{eqdefBv}) this becomes
\begin{equation*}
 \frac\kappa{\sqrt2} \int_{\R^2\times (0,t)}  [\partial_3\theta \circ v \,\uno_r+ \nabla'\theta \circ v\cdot (\uno_r\, \partial_3 v') ]dx
  =\frac\kappa{\sqrt2}\int_{\R^2\times (0,t)} \uno_r(x')\frac{d}{dx_3} (\theta\circ v)(x)dx=0\,.
\end{equation*}
Therefore the divergence condition is satisfied.

Finally, $\|B'\|_{L^\infty}\le \kappa \|\uno_r\partial_3 v'\|_{L^\infty}\le \kappa |p-\hat p|/t + c\kappa a/t$. Since $a\sim b$ and 
the volume of its support is $tab$, we obtain
\begin{equation*}
  \int_{\R^2\times (0,t)} |B'|^2 dx \lsim\kappa^2\frac{|p-\hat p|^2+ab}{t} ab\,.
\end{equation*}
From the definition of $\omega$ one easily obtains the other properties.
\end{proof}

At this point we present the branching construction, which gives the refinement  of the magnetic flux close to the boundary.
We start from a prescribed interior structure, which can be obtained either by uniform subdivision of the
domain and quantization of the field, or by nonuniform subdivision of the domain with a uniform field. 
We stress
that the rectangles in which the construction is localized do not need to cover $Q_L$. Indeed, for very small fields
the optimal scaling is only obtained if these rectangles cover a very small fraction of $Q_L$ (the volume fraction will be $\gamma^2$ in the  proof
of Theorem \ref{theochiupper1} below), see Figure \ref{figinternal}.
\begin{figure}
 \begin{center}
  \includegraphics[height=6cm]{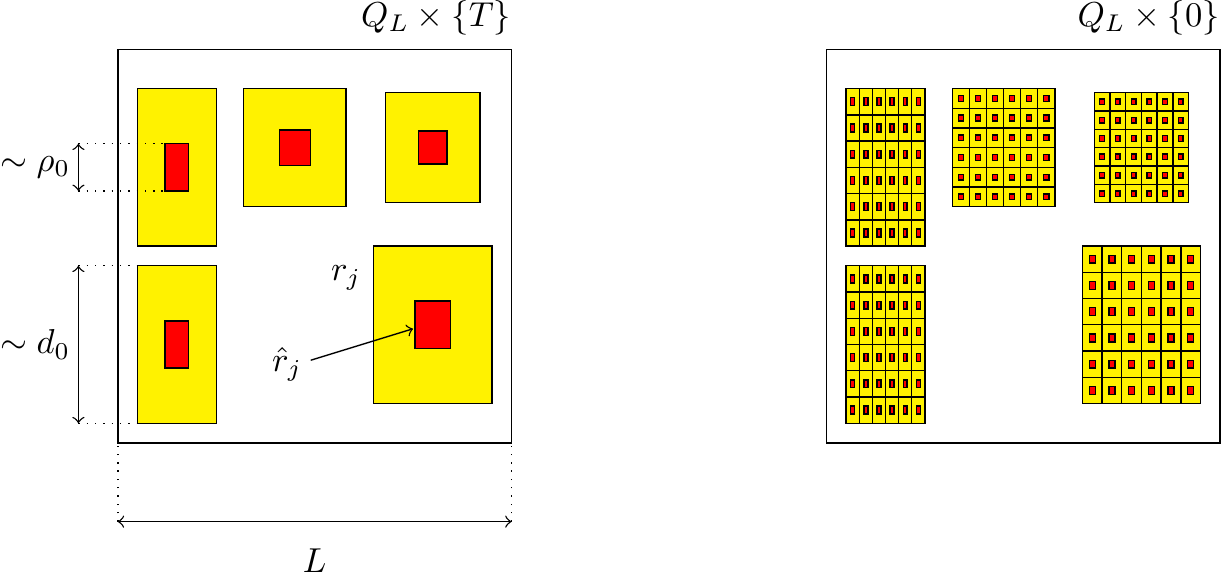}
\end{center}
\caption{Sketch of the horizontal geometry in Lemma 
\ref{lemmaconstrinternal}. Left panel: cross-section at $x_3=T$, with the 
initial rectangles $r_j$ (yellow) and $\hat r_j$ (red) shown. 
Right panel: cross-section at $x_3=0$, with the finer structure of $B_3(\cdot,0)$ shown.}\label{figinternal}
\end{figure}

\begin{lemma}\label{lemmaconstrinternal}
Let $\kappa,L,T,  d_0,\rho_0,N>0$, with $N^2\in\N$ and $\kappa\le 1$, and assume that two sets of $N^2$ rectangles
are given,
\begin{equation*}
 \{r_1,\dots,  r_{N^2}\}\hskip5mm
 \text{ and }\hskip5mm
\{\hat r_1,\dots,  \hat r_{N^2}\}\,,
 \end{equation*}
such that  each of them has aspect ratio no larger than 3, $\hat r_j\subset r_j\subset Q_L$, 
the $r_j$ are pairwise disjoint, 
$|\hat r_j|\sim \rho_0^2$, 
$|r_j|\sim d_0^2$, with
\begin{equation*}
\frac\kappa{\sqrt2}| \hat r_j|\in 2\pi \N \hskip5mm\text{ for all $j$. }
\end{equation*}
Assume 
\begin{equation}\label{eqkapparho0}
\kappa\rho_0\gsim 1  \,.
\end{equation}
 Then
there is a pair $(\chi,B)$, admissible in the sense of
Definition \ref{defadmisssharp},
and such that
\begin{equation*}
 B_3(\cdot,T)=
  \frac\kappa{\sqrt2} \sum_{j=1}^{N^2} \uno_{\hat r_j}\hskip1cm\text{ on $Q_L$}\,,
\end{equation*}
\begin{equation}\label{eqfintpropcostr}
F^\Int[\chi,B,Q_{L,T}]
  \lsim  \kappa \rho_0 TN^2+\kappa^2 \frac{\rho_0^2 d_0^2N^2}{T}
\end{equation}
and, with $\hat b_j:=\kappa/\sqrt2  (|\hat r_j|/|r_j|)$, 
\begin{equation}\label{eqhm12QLd}
  \left\|B_3(\cdot, 0)-\sum_{j=1}^{N^2} \hat b_j \uno_{r_j} 
  \right\|_{H^{-1/2}(Q_L)}^2 
  \lsim  {\kappa\rho_0^2N^2}  + \frac{\kappa^{2}\rho_0^3N^2d_0}{T}  \,.
 \end{equation}
The set $\omega:=\{x\in Q_{L,T}: \chi(x)=1\}$ is formed by the union of finitely many sheared
parallelepipeds, with two faces normal to $e_3$, and
\begin{equation}\label{eqestchiuppernbd2}
 |(\omega)_{1/\kappa}\setminus\omega| \lsim \frac{1}{\kappa}\int_{Q_{L,T}} |D\chi|\,.
\end{equation}
For any $z\in (0,T)$  the flux of $B$ across each
connected component of $\{x_3=z\} \cap \omega$ is an integer multiple of $2\pi$.
\end{lemma}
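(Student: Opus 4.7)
The plan is to build $(\chi,B)$ as a self-similar $4$-ary branching tree of flux tubes, using Lemma \ref{lemmasubdivideflux} to produce, inside each $r_j$, hierarchical integer-flux partitions at every scale and Lemma \ref{lemmatransformrectintorect} as the building block for the transition between two adjacent scales. Layer by layer, this construction will interpolate between the concentrated top data $(\kappa/\sqrt2)\sum\uno_{\hat r_j}$ at $x_3=T$ and a bottom configuration made of many small tubes whose $H^{-1/2}$-average is close to the uniform profile $\hat b_j\uno_{r_j}$.

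Concretely, I would apply Lemma \ref{lemmasubdivideflux} to each $r_j$ with $B_\ast=\hat b_j=(\kappa/\sqrt2)\rho_0^2/d_0^2$, for which $|r_j|\hat b_j=(\kappa/\sqrt2)|\hat r_j|\in 2\pi\Z$ by hypothesis, obtaining at each level $k=0,\dots,J$ a $4^k$-fold partition $\{R^{(k)}_{i,j}\}$ of $r_j$ into sub-rectangles of bounded aspect ratio and area $\sim d_0^2/4^k$. Inside each $R^{(k)}_{i,j}$ I would place a centred flux concentrator $r^{(k)}_{i,j}$ of area $|R^{(k)}_{i,j}|\rho_0^2/d_0^2\sim\rho_0^2/4^k$, so that setting $\chi=1$ and $B_3=\kappa/\sqrt2$ on $r^{(k)}_{i,j}$ reproduces the flux of its parent. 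I would split $(0,T)$ into a topmost column layer (with a short transitional piece accounting for the possibly off-centre position of $\hat r_j$ inside $r_j$) followed by $J$ branching layers of thicknesses $t_1,\dots,t_J$; in the $k$-th branching layer Lemma \ref{lemmatransformrectintorect} is applied independently in each child rectangle $R^{(k)}_{i,j}$ to transport one quarter of the parent concentrator $r^{(k-1)}_{\text{parent},j}$ into $r^{(k)}_{i,j}$.

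Energy bookkeeping: in layer $k$, Lemma \ref{lemmatransformrectintorect} gives a perimeter contribution of order $\kappa N^2\rho_0 2^k t_k$ (namely $4^k N^2$ tubes of side $\rho_0/2^k$) and a transport contribution of order $\kappa^2 N^2 d_0^2\rho_0^2/(4^k t_k)$ (using $|p-\hat p|\sim d_0/2^k$). Layer-wise balance yields $t_k\sim d_0\sqrt{\kappa\rho_0}/(2\sqrt2)^k$, a convergent geometric series; the summed branching cost together with the column contribution $\kappa\rho_0 T N^2$ produces the two terms of \eqref{eqfintpropcostr}. For \eqref{eqhm12QLd} I would apply Lemma \ref{lemmaestimatehminus12} with outer family $\{R^{(J)}_{i,j}\}$ and inner family $\{r^{(J)}_{i,j}\}$, whose flux-matching relation $|R^{(J)}_{i,j}|\hat b_j=|r^{(J)}_{i,j}|\kappa/\sqrt2$ is built into the construction, getting $\sum(\kappa/\sqrt2)^2|r^{(J)}_{i,j}|^{3/2}\sim \kappa^2 N^2\rho_0^3/2^J$, plus a residual from the finite branching depth that accounts for the second term. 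The remaining properties ($\omega$ as a finite union of sheared parallelepipeds, the neighbourhood estimate \eqref{eqestchiuppernbd2}, and flux quantisation through every connected component of $\{x_3=z\}\cap\omega$) are inherited directly from Lemma \ref{lemmatransformrectintorect} (since its map $v$ is piecewise affine with unit Jacobian) and from the integer-flux property produced by Lemma \ref{lemmasubdivideflux}.

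The main obstacle is choosing $J$ compatibly with the constraint $t\gtrsim a$ of Lemma \ref{lemmatransformrectintorect}: at level $k$ the tubes have horizontal side $\rho_0/2^k$, so combining with the balance above gives $2^k\lesssim\kappa\rho_0$, and the hypothesis $\kappa\rho_0\gtrsim 1$ is exactly what guarantees that at least the level $k=0$ is admissible. Taking $J$ maximal subject to $2^J\lesssim\kappa\rho_0$ makes the contribution of Lemma \ref{lemmaestimatehminus12} of order $\kappa\rho_0^2N^2$, matching the first term of \eqref{eqhm12QLd}. A subsidiary difficulty is that $\hat r_j$ need not be centred in $r_j$; this is absorbed into one additional application of Lemma \ref{lemmatransformrectintorect} at the very top (with $|p-\hat p|\lesssim d_0$ and $t\sim T$), which contributes to the transport-type term $\kappa^2\rho_0^2d_0^2N^2/T$ and does not affect the scaling.
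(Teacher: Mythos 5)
Your overall strategy is the one the paper follows: subdivide each $r_j$ hierarchically via Lemma~\ref{lemmasubdivideflux} with $B_\ast=\hat b_j$, concentrate the flux in centred inner rectangles, stack the branching levels using Lemma~\ref{lemmatransformrectintorect}, and control the bottom boundary term via Lemma~\ref{lemmaestimatehminus12}. However, the way you choose the vertical layer schedule and the stopping level $J$ leaves several genuine gaps.

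First, your ``layer-wise balance'' gives $t_k\sim d_0\sqrt{\kappa\rho_0}/(2\sqrt2)^k$, so $\sum_k t_k\sim d_0\sqrt{\kappa\rho_0}$. Nothing in your argument guarantees that this fits inside $(0,T)$; the lemma is stated for arbitrary $T$ with $\kappa T\ge1$, and in particular one must handle $T\lsim d_0$, where no branching is possible. The paper avoids this by \emph{not} balancing layer-by-layer: it sets $t_i=T(1-\theta)\theta^i$ with a fixed $\theta\in(1/4,1/2)$, so the layers trivially occupy at most $T$, and the two series $\sum(2\theta)^i$ and $\sum(4\theta)^{-i}$ both converge, giving \eqref{eqfintpropcostr} directly. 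Second, the stopping level must satisfy $2^I\sim\min\{\kappa\rho_0,\, T/d_0\}$, not just $2^J\lsim\kappa\rho_0$: the $T/d_0$ cap is what makes $t_I\gsim d_I$ (equivalently keeps the construction inside the slab) and, crucially, is what produces the second term $\kappa^2\rho_0^3N^2d_0/T$ of \eqref{eqhm12QLd}, which you describe only as an unexplained ``residual.'' Third, your algebra for the constraint is off: imposing $t_k\gtrsim\rho_k$ with your $t_k$ yields $2^k\lsim \kappa d_0^2/\rho_0$, not $2^k\lsim\kappa\rho_0$. Fourth, you do not verify (as the paper does via its condition $4^I\le\hat b_j|r_j|/(8\pi)$, coming from $\kappa\rho_0^2>(\kappa\rho_0)^2$) that each $\hat r_j$ carries enough flux quanta to support $J$ bisection levels without Lemma~\ref{lemmasubdivideflux} producing degenerate rectangles. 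Fixing the approach requires taking over the paper's choice of $\{t_i\}$ and $2^I\sim\min\{\kappa\rho_0,T/d_0\}$, and treating the columnar case $T\lsim d_0$ separately.
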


\begin{proof}
We first observe that a simple construction which obeys all kinematic constraints is obtained using
a pattern which does not depend on $x_3$. From the boundary data at $x_3=T$ we see that this necessarily is
\begin{equation*}
\hat B=(0,0,\frac{\kappa}{\sqrt2}\hat\chi)\,,\hskip1cm \hat\chi(x',x_3)=\sum_{j=1}^{N^2} \uno_{\hat r_j}(x')\,.
\end{equation*}
A simple computation shows that
\begin{equation*}
 F^\Int[\hat\chi,\hat B, Q_{L,T}] \lsim \kappa\rho_0 TN^2
\end{equation*}
and, using Lemma \ref{lemmaestimatehminus12},
\begin{equation*}
\left\|\hat B_3(\cdot,0)-\sum_{j=1}^{N^2} \hat b_j \uno_{r_j} 
  \right\|_{H^{-1/2}(Q_L)}^2 
  \lsim \sum_j \kappa^2|\hat r_j|^{3/2} \lsim \kappa^2N^2\rho_0^3\,.
\end{equation*}
At the same time $|(\omega)_{1/\kappa}\setminus\omega|\lsim N^2T (\rho_0/\kappa+1/\kappa^2)$.
If  $T\lsim d_0$, the proof is concluded. 
We observe that if $\kappa\rho_0\lsim1$ the energy estimates would also hold, but not the one
on the measure of $(\omega)_{1/\kappa}$. 
In the following we assume  $T\gg d_0$.

We choose $I\in\N$ such that
\begin{equation}\label{eqconstrIcond}
2^I\sim  \min\left\{\kappa\rho_0, \frac{T}{d_0}\right\}\,.
\end{equation}
Possibly reducing $I$ by a few units, which does not affect the statement, we can assume that
\begin{equation}\label{eq4iadm}
 4^I \le \frac{\hat b_j|r_j|}{8\pi} = \frac{\kappa|\hat r_j|}{8\pi \sqrt2}\hskip5mm \text{ for all $j$}
\end{equation}
(to see this, one observes that $\kappa|\hat r_j|\sim\kappa\rho_0^2>(\kappa\rho_0)^2$).
The construction is performed independently in each set $r_j\times (0,T)$,
we take both fields to  vanish outside the union of these sets.
Let $R_{j,i,h}$ be the 
rectangles given in Lemma \ref{lemmasubdivideflux} for $1\le i\le I$, $1\le h\le 4^i$,
starting from $r_j$, using the  field $B_\ast=\hat b_j$. We denote by $i$ the refinement
level, by $h$ the numbering of the rectangles at each level.
By construction we have $||R_{j,i,h}|-4^{-i}|r_j||\le 4\pi/\hat b_j$ for all $j,i,h$; 
with (\ref{eq4iadm}) we obtain 
$|R_{j,i,h}|\ge 4^{-i}|r_j| - 4\pi/\hat b_j
= 4^{-i}|r_j| (1- 4\pi 4^i/(|r_j|\hat b_j))
\ge \frac12 4^{-i} |r_j|$, and analogously 
$|R_{j,i,h}|\le \frac32 4^{-i}|r_j|$. Therefore
$|R_{j,i,h}|\sim 4^{-i}|r_j|$.

We localize the magnetic field in an appropriate subset of  each of the $R_{j,i,h}$. 
This is done by the family of inner rectangles  $(\hat R_{j,i,h})_{h=1,\dots, 4^i}$ which we now define:
we let  $\hat R_{j,i,h}$ have the same center and aspect ratio as $R_{j,i,h}$,
and area given by
 \begin{equation*}
 |\hat R_{j,i,h}| \frac{\kappa}{\sqrt2} = |R_{j,i,h}| \hat b_j\,.
 \end{equation*} 
We recall that $\hat b_j$ is defined so that $\hat b_j|r_j|=\kappa |\hat r_j|/\sqrt2\in 2\pi\N$,
in particular $\hat b_j\le \kappa/\sqrt2$.
Therefore, $\hat R_{j,i,h}\subset R_{j,i,h}$, and the
locally optimal field $\kappa/\sqrt2$ carries 
over $\hat R_{j,i,h}$ the same flux that $\hat b_j$ carries over $R_{j,i,h}$.
At the level $i=0$ this definition gives $\hat R_{0,j}=\hat r_j$.

To estimate the size of the rectangles we define
\begin{equation*}
 d_i:=2^{-i}d_0\hskip5mm  \text{ and }\hskip5mm
  \rho_i:=\rho_0 2^{-i} \,.
\end{equation*}
Then $|R_{j,i,h}|\sim 4^{-i}|r_j|\sim d_i^2$ and, correspondingly,   $|\hat R_{j,i,h}|
=|R_{j,i,h}|\,|\hat r_j|/|r_j|
\sim \rho_i^2$ for all $i=1,\dots,I$.

\begin{figure}
 \begin{center}
  \includegraphics[height=5.5cm]{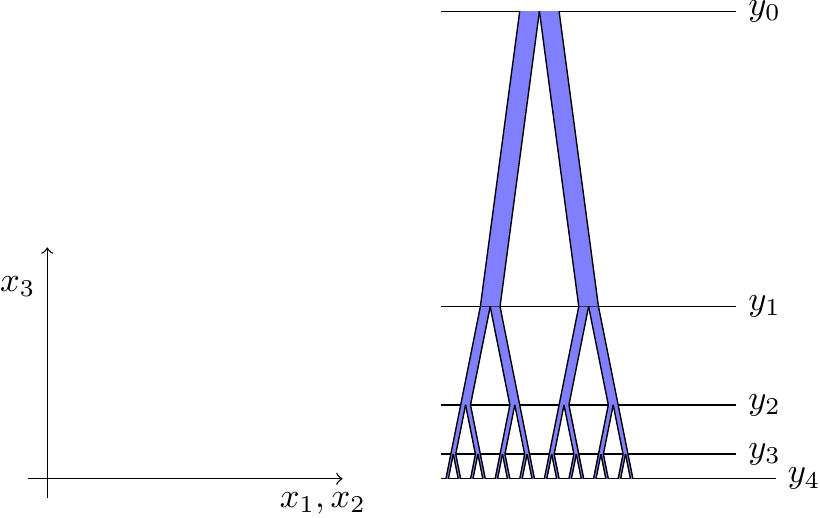}
\end{center}
\caption{Sketch of the vertical structure in Lemma 
\ref{lemmaconstrinternal}. 
The thickness of the tubes at stage $i$ is $\rho_i$, their horizontal separation $d_i$, the distance
between two vertical steps $t_i$.
}\label{figvertbr}
\end{figure}

  We now define the vertical structure, see Figure \ref{figvertbr}. The refinement steps, labeled by  $i$  in the decomposition
  of the rectangles, will describe the structure at different levels, which are labeled $y_i$. Precisely, for 
some $\theta\in (0,1)$ chosen below, we define
\begin{equation*}
 y_i:= T \theta^i, \hskip5mm
 t_i:=y_i-y_{i+1}= T \theta^i (1-\theta)\,.
\end{equation*}
By (\ref{eqconstrIcond}), if $\theta\ge 1/4$ we obtain $t_i\gsim d_i$ for all $i\le I$.

The explicit construction in $Q_L\times (y_{i+1}, y_i)$,
for $i=0, \dots, I-1$, is done using Lemma \ref{lemmatransformrectintorect}.
At each step we interpolate between $\hat R_{j,i,h}$ and the four corresponding 
$\hat R_{j,i+1,h_l}$, $l=1,2,3,4$.
Since the side length of $R_{j,i,h}$ is controlled by $d_i$, 
 the side length of the inner rectangle where the field is located is controlled by $\rho_i$,
 and $d_i\lsim t_i$, 
the  energy in $Q_L\times (y_{i+1}, y_i)$ is bounded by
\begin{equation*}
F^\Int[\chi,B,Q_L\times (y_{i+1}, y_i)]\lsim
 \sum_{j=1}^{N^2} \sum_{h=1}^{4^i}  \left[ \kappa \rho_it_i +\kappa^2 \rho_i^2\frac{d_i^2}{t_i}
\right] \lsim N^2\kappa \rho_0t_0(2\theta)^i  + N^2\kappa^2 \frac{\rho_0^2 d_0^2}{t_0 (4\theta)^i} \,.
\end{equation*}
The series converges for all $\theta\in (1/4, 1/2)$. Choosing $\theta=1/3$
we conclude that the  energy in $Q_L\times (y_I,T)$ is bounded by
\begin{equation*}
F^\Int[\chi,B,Q_L\times (y_{I}, T)]\lsim
  \sum_{i=1}^I   \sum_{j=1}^{N^2} \sum_{h=1}^{4^i}\left[ \kappa \rho_it_i +\kappa^2 \rho_i^2\frac{d_i^2}{t_i}
\right] \lsim \kappa \rho_0 TN^2+\kappa^2\rho_0^2 \frac{d_0^2 N^2}{T}\,.
\end{equation*}
We still have to consider the region $Q_L\times(0, y_I)$. Here we set $B'=0$ and $(\chi,B)(x',x_3)=(\chi,B)(x', y_I)$.
The only energy contribution comes from the surface term, and is $N^24^I\kappa\rho_I y_I$. 
Since $y_I\lsim t_I$, this is controlled by the last summand in the series,
and therefore does not change the scaling of the total energy.

 We finally estimate the boundary term.
 Using Lemma \ref{lemmaestimatehminus12} we obtain
 \begin{equation*}
  \|\sum_{j=1}^{N^2}\sum_{h=1}^{4^I}\left( \frac{\kappa}{\sqrt2} \uno_{\hat R_{j,I,h}} - \hat b_j \uno_{R_{j,I,h}} \right)
  \|_{H^{-1/2}(Q_L)}^2 \lsim \kappa^2 \sum_{j=1}^{N^2}\sum_{h=1}^{4^I} |\hat R_{j,I,h}|^{3/2}
  \lsim  \frac{N^2\kappa^{2}\rho_0^3}{2^I}  \,.
 \end{equation*}
 We observe that $\sum_h \uno_{R_{j,I,h}}=\uno_{r_j}$, by the construction of the rectangles. 
Inserting, from the definition (\ref{eqconstrIcond}),
$2^{-I}\sim (\kappa\rho_0)^{-1}+d_0/T$,
concludes the proof of (\ref{eqhm12QLd}).

To prove (\ref{eqestchiuppernbd2}) from Lemma \ref{lemmatransformrectintorect} 
it suffices to show that $1/\kappa\lsim \rho_I$, i.e., $\kappa\rho_I\gsim 1$.
Since $\kappa\rho_I=2^{-I}\kappa\rho_0 $, this follows immediately from (\ref{eqconstrIcond}).
\end{proof}

\begin{proof}[Proof of Theorem \ref{theochiupper1}]
We fix $k\in\N$, set $N:=2^{k}$, and let $\{R_{k,j}\}_{1\le j\le N^2}$ be the 
rectangles given in Lemma \ref{lemmasubdivideflux} applied to $R_{0,1}:=Q_L=(0,L)^2$ with $B_\ast:=\bext$.
We assume
\begin{equation}\label{eqcondNcostr}
 N^2\le \frac{\bext L^2}{8\pi}
\end{equation}
so that $|R_{k,j}|\sim L^2/N^2$.
We first localize the flux to the central part of each of the rectangles. This is the step 
in which the two regimes differ. Fix 
a factor $\gamma\in (0,1]$, chosen below,
and let $r_j$ be a rectangle with the same center and aspect ratio as $R_{k,j}$, scaled by a factor $\gamma$.
We shall concentrate the 
entire flux over $R_{k,j}$ into the smaller rectangle $r_j$, so that the magnetic field over $r_j$ is $\hat b:=\bext /\gamma^2$;
from $\bext |R_{k,j}|\in 2\pi \Z$ we obtain $\hat b |r_j|\in2\pi\Z$.
Since $\bext=\sum_{j=1}^{N^2} \bext \uno_{R_{k,j}}$, 
 Lemma \ref{lemmaestimatehminus12} yields
\begin{alignat}1
   \|\sum_{j=1}^{N^2} \hat b \uno_{r_j} - \bext  \|_{H^{-1/2}(Q_L)}^2 &
  \lsim  \sum_j  \hat b^2|r_j|^{3/2} \uno_{\gamma<1}
  \lsim \bext^2 \frac{L^3}{N \gamma} \uno_{\gamma<1}\,.\label{eqhm12QLc}
 \end{alignat}
 The factor $ \uno_{\gamma<1}$ represents the fact that this term is only present if $\gamma<1$. In the case $\gamma=1$,
 indeed, we have $\sum \uno_{r_j}=1$ on $Q_L$, and therefore this term vanishes.
 
 We then use Lemma \ref{lemmaconstrinternal}
 with the given set of $r_j$, $d_0=\gamma\frac{L}{N}$, $\kappa$, $L$, $N$ as above, and thickness $T/2$, 
on the set $Q_L\times(0,T/2)$ (the other half is symmetric and not discussed explicitly).
The inner rectangles  are chosen so that $\hat r_j$ has the same center
and aspect ratio as $r_j$ and area given by $\kappa|\hat r_j|/\sqrt2=\hat b |r_j|$, correspondingly
the length scale is
\begin{equation*}
\rho_0:=d_0  \left(\frac{\hat b \sqrt2}{\kappa}\right)^{1/2}
=\frac LN \left(\frac{\bext\sqrt2}{\kappa}\right)^{1/2}\,.
\end{equation*}
Since $\hat r_j$ needs to be a subset of $r_j$, this is possible only if 
$\hat b\le \kappa/\sqrt2$, therefore it is only possible if $\gamma$ is chosen such that
 \begin{equation}\label{eqconstrgammacond}
  \left(\frac{\bext\sqrt2}{\kappa}\right)^{1/2}\le\gamma \le 1\,.
 \end{equation}
Since $\bext\le \kappa/2$, this set is non empty.
At the same time, the condition (\ref{eqkapparho0}) is satisfied provided that
\begin{equation}\label{eqkapparho0subst}
 (\bext\kappa)^{1/2} \frac{L}{N} \ge1\,.
\end{equation}
 To estimate the boundary term we combine (\ref{eqhm12QLc}) with (\ref{eqhm12QLd})
 \begin{align*}
  \| B_3(\cdot,0)-\bext\|_{H^{-1/2}(Q_L)}^2 
 & \le 2 \|B_3(\cdot,0)-\sum_{j=1}^{N^2} \hat b \uno_{r_j}\|_{H^{-1/2}(Q_L)}^2 
  +2\|\sum_{j=1}^{N^2} \hat b \uno_{r_j}-\bext\|_{H^{-1/2}(Q_L)}^2 
 \end{align*}
and obtain
 \begin{equation}
  \|B_3(\cdot,0) - \bext
  \|_{H^{-1/2}(Q_L)}^2 \lsim \bext^2 \frac{L^3}{N \gamma}\uno_{\gamma<1}+  \kappa\rho_0^2N^2  + \frac{\kappa^{2}\rho_0^3N^2d_0}{T}   \,.
 \end{equation} 
 We then extend $B$ to $Q_L\times(-\infty,0)$ by Lemma \ref{lemmah12}, 
 and define $\chi$ and $B$ on $Q_L\times (T/2,\infty)$ by symmetry,
 $(\chi,B)(x',T/2+z)=(\chi,B)(x',T/2-z)$.
 Recalling (\ref{eqfintpropcostr}) we see that the total energy is  bounded by
 \begin{equation*}
  F[\chi,B]\lsim
  \kappa \rho_0 TN^2+\kappa^2 \frac{\rho_0^2 d_0^2N^2}{T}
  +\bext^2 \frac{L^3}{N \gamma}\uno_{\gamma<1}+  \kappa\rho_0^2N^2  + \frac{\kappa^{2}\rho_0^3N^2d_0}{T}\,.
 \end{equation*}
Since $\rho_0\le d_0$, the last term is smaller than the second one and therefore can be neglected.
 Dividing by the area $L^2$ and 
 inserting the definitions $\rho_0=L \bext^{1/2}/(N \kappa^{1/2})$ and
 $d_0= L\gamma/N$ gives 
\begin{equation}\label{equbfchibsdf}
 \frac{F[\chi,B]}{L^2}\lsim \kappa^{1/2} \bext^{1/2} \frac{TN}{L}+ \kappa \bext \frac{L^2\gamma^2}{N^2T}+
\bext^2 \frac{L}{N \gamma}\uno_{\gamma<1}+  
\bext 
\,.
\end{equation}
The construction is possible 
for all $N\ge 1$, $\gamma\in(0,1]$ which obey   (\ref{eqcondNcostr}), (\ref{eqconstrgammacond}), and (\ref{eqkapparho0subst}). To conclude the proof it suffices to choose these 
parameters appropriately.

\medskip 
\noindent
{\it Choice of the parameters: intermediate regime.}
We assume here 
\begin{equation}\label{eqregimeint}
\bext\ge \frac{\kappa^{5/7}}{\sqrt2T^{2/7}}\,.
\end{equation}
In this regime, we balance the first two terms in (\ref{equbfchibsdf}) by choosing the length scale $L/N$ as
\begin{equation*}
\gamma=1 \hskip1cm \text{ and }\hskip1cm N=\inf\{2^k: k\in\N, 2^k\ge N_*\} \hskip5mm \text{ where }\hskip5mm
\frac{N_*}{L}=\frac{(\kappa \bext)^{1/6}}{\alpha T^{2/3}}\,,
\end{equation*}
where $\alpha$ is a number of order 1 chosen below.
We assume $N_*\ge 1$ so that the rounding of $N_*$ to the next power of $2$ does not modify it by more than a factor of 2,
$N_*\le N\le 2N_*$. One obtains
\begin{align*}
 \frac{F[\chi,B]}{L^2}\lsim &(\kappa \bext)^{2/3} T^{1/3} 
 + \bext
 \lsim (\kappa \bext)^{2/3} T^{1/3} 
\end{align*}
where we used that one term disappears  because $\gamma=1$;
the $\bext$ term can be dropped since
$\bext / ((\kappa \bext)^{2/3} T^{1/3} )=(\bext/\kappa)^{1/3} (\kappa T)^{-1/3}\le 1$.

It remains to check that the choices made are admissible.
Condition  (\ref{eqkapparho0subst}) translates into
$\alpha (\kappa \bext T^2)^{1/3}\ge 2$. 
Using (\ref{eqregimeint}),
$\kappa T\ge 1$ and assuming $\alpha\ge4$ one can easily see that it is satisfied.
Since $N\le 2N_*$, condition (\ref{eqcondNcostr}) translates into
\begin{equation*}
\left(\frac{\bext^2 T^4}{\kappa}\right)^{1/3} \ge \frac{32\pi}{\alpha^2} \,.
\end{equation*}
Using first (\ref{eqregimeint}) and then $\kappa T\ge 1$  and $\kappa\le 1/2$ one can easily check that
the parenthesis is at least 4, 
therefore it suffices to choose $\alpha=8$.
Condition (\ref{eqconstrgammacond}) is immediate.
Finally, we check that $N_*\ge 1$. This is equivalent to
\begin{equation*}
 L\ge 8\frac{T^{2/3}}{(\kappa \bext)^{1/6}}\,.
\end{equation*}
\medskip 

\noindent
{\it Choice of the parameters: extreme regime.}
In this case we assume
\begin{equation}\label{eqregimeext}
\bext\le \frac{\kappa^{5/7}}{\sqrt2T^{2/7}}\,.
\end{equation}
In this regime, we can actually make the three first terms in (\ref{equbfchibsdf}) balance 
by choosing the length scale $L/N$and $\gamma$ according to
\begin{equation*}
\gamma=2^{1/4}\frac{T^{1/7}\bext^{1/2}}{\kappa^{5/14}}
\hskip5mm \text{ and }\hskip5mm   N=\inf\{2^k: k\in\N, 2^k\ge N_*\} \hskip5mm \text{ where }\hskip5mm
\frac{N_*}{L}=\frac{\bext^{1/2}}{\alpha\kappa^{1/14}T^{4/7}}\,,
\end{equation*}
for some $\alpha>0$ chosen below.
Again, we require $N_*\ge 1$ so that $N_*\le N\le 2N_*$.
This gives $d_0\sim T^{5/7}/\kappa^{2/7}$ and
\begin{equation*}
 \frac{F[\chi,B]}{L^2}\lsim  \bext\kappa^{3/7} T^{3/7} 
 + \bext\lsim \bext \kappa^{3/7}  T^{3/7} 
\end{equation*}
since $\kappa T\ge 1$.

We turn to checking that the choices made are admissible.
The assumption (\ref{eqregimeext})  is equivalent to $\gamma\le 1$.
The other inequality in (\ref{eqconstrgammacond}) is fulfilled by
 $\kappa T\ge 1$ (this is the reason for inserting the factors $2^{1/4}$ and $\sqrt2$
 in the definition of $\gamma$ and (\ref{eqregimeext})).
Condition (\ref{eqkapparho0subst}) becomes
$\alpha (\kappa T)^{4/7}\ge 2$, which is  true for any $\alpha\ge 2$.
Condition (\ref{eqcondNcostr})
becomes 
\begin{equation*}
 \frac{\alpha^2}{32\pi} \frac{(\kappa T)^{8/7}}{\kappa}\ge 1\,,
\end{equation*}
which again is satisfied if $\alpha=8$.
 The fact that $N_*\ge 1$ translates into
 \begin{equation*}
 L\ge8 \frac{T^{4/7}\kappa^{1/14}}{ \bext^{1/2}}\,.
\end{equation*}
This concludes the proof.
\end{proof}

\subsection{Construction for the Ginzburg-Landau functional}
We finally give the upper bound construction for the Ginzburg-Landau functional. 
We start from the constructions given in Section \ref{secconstrsharp} for
the sharp-interface functional $F$.

The first step is to construct the vector potential $A$ from the magnetic field $B$. We use the following lemma,
which is a variant of Hodge's decomposition in the current geometry.
\begin{lemma}\label{lemmaconstructA}
  Let $B\in L^2_\loc(\R^3;\R^3)$, $Q_L$ periodic, such that $\Div B=0$
  distributionally and $\int_{Q_L\times \R} |B|^2 dx<\infty$.
Then there is $A\in W^{1,2}_\loc(\R^3;\R^3)$, also $Q_L$-periodic, such that
\begin{equation}
  \nabla \times A = B \,.
\end{equation}
\end{lemma}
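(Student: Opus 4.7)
The plan is to build $A$ by a Hodge-type construction in Fourier. First I would observe that the only topological obstruction to the existence of a $Q_L$-periodic $A$ on the slab $\R^3/(L\Z^2\times\{0\})$ is the vertical flux: by Lemma~\ref{lemmaHmeno12} the function $z\mapsto\int_{Q(z)}B_3\,dx'$ is constant, and the hypothesis $B\in L^2(Q_L\times\R)$ forces this constant to vanish. With this obstruction removed, a $Q_L$-periodic vector potential should exist.

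Next I would expand $B$ in Fourier series in the horizontal variables, $B(x',x_3)=\sum_{k'\in(2\pi/L)\Z^2}e^{ik'\cdot x'}\hat B(k',x_3)$, and construct a $Q_L$-periodic $G\in W^{2,2}_\loc(\R^3;\R^3)$ solving $\Delta G=-B$ mode by mode. For $k'\ne 0$, I would set
\begin{equation*}
\hat G_j(k',x_3):=\frac{1}{2|k'|}\int_\R e^{-|k'||x_3-s|}\,\hat B_j(k',s)\,ds,
\end{equation*}
which inverts $|k'|^2-\partial_3^2$ on $L^2(\R)$, so that $(\partial_3^2-|k'|^2)\hat G_j(k',\cdot)=-\hat B_j(k',\cdot)$. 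The kernel $K_{k'}(t):=(2|k'|)^{-1}e^{-|k'||t|}$ satisfies $\|K_{k'}\|_{L^1(\R)}=|k'|^{-2}$ and $\|\partial_3 K_{k'}\|_{L^1(\R)}=|k'|^{-1}$, so Young's inequality gives $\|\hat G_j(k',\cdot)\|_{L^2(\R)}\lsim|k'|^{-2}\|\hat B_j(k',\cdot)\|_{L^2(\R)}$ and analogously for one $x_3$-derivative. For $k'=0$ the equation degenerates to $\partial_3^2\hat G_j(0,\cdot)=-\hat B_j(0,\cdot)$: since $\hat B_3(0,\cdot)\equiv 0$ by the first step, I set $\hat G_3(0):=0$, and for $j=1,2$ take two antiderivatives of $-\hat B_j(0,\cdot)$ from a fixed base point, landing in $W^{2,2}_\loc(\R)$.

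Finally I put $A:=\nabla\times G$. The identity $\Div G=0$ follows for $k'\ne 0$ by convolving the div-free relation $ik'\cdot\hat B'(k',\cdot)+\partial_3\hat B_3(k',\cdot)=0$ against $K_{k'}$ and commuting the convolution with differentiation, and for $k'=0$ directly from $\hat G_3(0)\equiv 0$; hence $\nabla\times A=\nabla(\Div G)-\Delta G=B$. The $W^{1,2}_\loc$ regularity follows by Plancherel: the bounds on $\hat G_j$ and $\partial_3\hat G_j$, combined with the pointwise PDE $\partial_3^2\hat G_j=|k'|^2\hat G_j-\hat B_j$ to control the top derivatives, give $\||k'|\,\hat A(k',\cdot)\|_{L^2(\R)}+\|\partial_3\hat A(k',\cdot)\|_{L^2(\R)}\lsim\|\hat B(k',\cdot)\|_{L^2(\R)}$ with constant independent of $k'\ne 0$, so summing yields $\|\nabla A\|_{L^2(Q_L\times\R)}\lsim\|B\|_{L^2}$, while the $k'=0$ component of $A$ lies in $L^\infty_\loc(\R;\R^3)$ with $x_3$-derivative in $L^2(\R)$ directly from the antiderivative construction. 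The only delicate point is the separate bookkeeping at $k'=0$, where the convolution formula does not apply; the vanishing flux from the first step makes it transparent.
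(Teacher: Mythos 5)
Your construction is correct, and it is the same Fourier--Hodge idea as the paper's, but executed through a somewhat different route that is worth comparing. The paper subtracts from $B$ its horizontal average $\frac{1}{L^2}\int_{Q(x_3)}B\,dx'$ to obtain a field $\hat B$ with vanishing horizontal averages, takes the full $3$D Fourier transform (series in $x'$, integral in $x_3$), sets $\hat a(k)$ proportional to $k\times\hat b(k)/|k|^2$ for $k'\ne 0$, and finally adds back the single $x_3$-antiderivative $-\frac{1}{L^2}\int_{Q_L\times(0,x_3)}e_3\times B\,dx$ to recover $\nabla\times A=B$. You instead stay with a partial Fourier transform in $x'$ only, invert $|k'|^2-\partial_3^2$ mode by mode via the explicit kernel $K_{k'}$, form the vector Newtonian potential $G$ with $\Delta G=-B$ and $\Div G=0$, and set $A=\nabla\times G$; the $k'=0$ modes are treated separately by antiderivatives, using the vanishing flux. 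Unwinding the two constructions shows they produce essentially the same $A$: in particular the paper's correction term is exactly your $k'=0$ mode of $\nabla\times G$. What your route buys is that it avoids any Fourier transform in the unbounded $x_3$-direction and yields the $W^{1,2}_\loc$ regularity very transparently through the Young-inequality bounds $\|K_{k'}\|_{L^1}=|k'|^{-2}$, $\|\partial_3K_{k'}\|_{L^1}=|k'|^{-1}$, together with the pointwise ODE for the top derivative; the paper's treatment of the regularity is terser. What the paper's route buys is a single clean formula for $\hat a$ and a correction term applied once at the end, rather than a separate $k'=0$ bookkeeping. Both are entirely valid; one small remark is that your commuting of $\partial_3$ through the convolution to get $\Div G=0$ uses that $\hat B_3(k',\cdot)\in H^1(\R)$ (its $x_3$-derivative is $-ik'\cdot\hat B'(k',\cdot)\in L^2$), which is worth saying explicitly.
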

\begin{proof}
From Lemma \ref{lemmaHmeno12} we obtain
 \begin{equation*}
   \int_{Q(z)}B_3\,dx'=0\text{ for all }z\in \R
 \end{equation*}
 (in the sense of traces).
  We define
  \begin{equation*}
    \hat B(x) := B(x) - \frac{1}{L^2} \int_{Q(x_3)} B \,dx'
  \end{equation*}
  so that all components of $\hat B$ have average zero on each cross
  section $Q(z)$. Notice that $\hat B_3=B_3$, and $\nabla'\hat
  B=\nabla'B$. This implies
   $\hat B\in 
  L^2$, and $\Div \hat B=0$. 

  The rest of the proof gives a construction of $\hat A$ such that $\hat B=\nabla\times \hat A$.
  If one could solve directly $\Delta \Psi= \hat{B}$ with
$\Div \Psi=0$, then  $\nabla\times\Psi$ would work. Since we are working in an unbounded 
domain with mixed boundary conditions, 
for completeness we give an explicit construction of $\hat A$ based on Fourier series.

We Fourier transform to obtain coefficients $\hat b(k)$ such that
  \begin{equation*}
    \hat B(x)=\int_\R dk_3\sum_{k'\in 2\pi\Z^2/L} e^{ik\cdot x} \hat b(k)\,.
  \end{equation*}
  The transformation we just performed ensures that $\hat b(k)=0$ whenever
  $k'=0$ (this is the reason to consider $\hat B$ instead of $B$). 
Further, $k\cdot \hat b(k)=0$ for all $k$.

  We define
  \begin{equation*}
    \hat a(k):=
    \begin{cases}\displaystyle
\frac{-ik\times \hat b(k)}{k^2} & \text{ if } k'\ne 0\\
\displaystyle
0 & \text{ if }k'=0\,,
    \end{cases}
  \end{equation*}
so that for all $k$
\begin{equation*}
  ik\times \hat a(k) = 
    \hat b(k)\,.
\end{equation*}
The family $\hat a(k)$ also corresponds to a converging Fourier series,
since $|\hat a(k)|\le L|\hat b(k)|/(2\pi)$ (this is the step where it is important that
we requested $k'\ne 0$, and not merely $k\ne 0$). 

In real space, we set
\begin{equation*}
  \hat A(x):=\int_\R dk_3\sum_{k'\in 2\pi\Z^2/L} e^{ik\cdot x} \hat
  a(k)\,. 
\end{equation*}
Clearly $\hat A\in W^{1,2}_\loc(\R^3;\R^3)$, it is $Q_L$-periodic and obeys
\begin{equation*}
  \nabla\times \hat A = \hat B\,,
\end{equation*}
both sides being in $L^2(Q_L\times\R;\R^3)$. We finally set
\begin{equation*}
  A(x)=\hat A(x) - \frac{1}{L^2}\int_{Q_L\times(0,x_3)} (e_3\times B)dx\,.
\end{equation*}
The correction depends only on $x_3$. Therefore, recalling $\int_{Q(x_3)} B_3 dx'=0$,
\begin{eqnarray*}
  \nabla\times A &=& \nabla\times \hat A - \frac{1}{L^2}\int_{Q(x_3)} e_3\times
  (e_3\times B)dx'\nonumber\\
 &=& \hat B  + \frac{1}{L^2}\int_{Q(x_3)} B\, dx'= B\,.
\end{eqnarray*}
This concludes the proof.
\end{proof}

\begin{theorem}\label{theoupperbGL}
For any $\bext,  \kappa,L,T>0$ such that
\begin{equation*}
4\bext\le \kappa\le\frac12\hskip1cm\text{ and }\hskip1cm
  \kappa T\ge 1\,,
\end{equation*}
$L$ sufficiently large (in the sense of (\ref{eqLadmissible}) and  $ \bext L^2\in 2\pi \Z$
there is a pair $(u,A)\in H^1_{\mathrm{per}}$, such that
\begin{equation*}
 E[u,A]\lsim \min \left\{\bext \kappa^{3/7} T^{3/7} L^2, 
\bext^{2/3} \kappa^{2/3}  T^{1/3}L^2\right\} \,.
\end{equation*}
\end{theorem}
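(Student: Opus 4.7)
The plan is to lift the sharp-interface pair $(\chi,B)$ furnished by Theorem~\ref{theochiupper1} to a Ginzburg--Landau pair $(u,A)$ with $E[u,A]\lsim F[\chi,B]$. I will construct $A$ from $B$ via Lemma~\ref{lemmaconstructA}, smooth the indicator of the superconducting region into a density $\rho$ on the scale $1/\kappa$ of the coherence length, and set $u:=\sqrt\rho\,e^{i\varphi}$, where the phase $\varphi$ is chosen so that $\nabla\varphi=A$ on the superconducting region $S:=Q_{L,T}\setminus\overline\omega$, with $\omega:=\{\chi=1\}$. With this choice one has the pointwise identity $\nabla_A u=e^{i\varphi}\,\nabla\sqrt\rho$, which collapses the covariant kinetic energy into a Modica--Mortola surface contribution matched to the perimeter term in $F$.

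For the density, fix a smooth profile $g\colon[0,\infty)\to[0,1]$ with $g(0)=0$, $g\equiv 1$ on $[1/\kappa,\infty)$, and $|g'|\lsim\kappa$, and set $\sqrt\rho(x):=g(\dist(x,\omega))$ using the $Q_L$-periodic distance. Estimate~\eqref{eqestchiuppernbd} bounds the volume of the transition region $(\omega)_{1/\kappa}\setminus\omega$ by $\kappa^{-1}\int_{Q_{L,T}}|D\chi|$, hence
\[
\int_{Q_{L,T}}\left(|\nabla\sqrt\rho|^2+\kappa^2(1-\rho)^2\right)dx \;\lsim\; \kappa\int_{Q_{L,T}}|D\chi| \;\lsim\; F[\chi,B].
\]
For the phase, the construction of Section~\ref{secconstrsharp} places the support of $B$ entirely inside $\omega$ (both $B_3=(\kappa/\sqrt 2)\uno_\omega$ and $B'$, by Lemma~\ref{lemmatransformrectintorect}), so $A$ restricted to $S$ is closed. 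The holonomy of $A$ around any loop in $S$ equals the flux of $B$ through any bounding surface, which by the flux-quantization clause of Theorem~\ref{theochiupper1} is an integer multiple of $2\pi$; thus a smooth map $\varphi\colon S\to\R/2\pi\Z$ with $\nabla\varphi=A$ exists. Set $u:=\sqrt\rho\,e^{i\varphi}$ on $S$ and $u\equiv 0$ on $\omega$; the continuity $\sqrt\rho\to 0$ at $\partial\omega$ then yields $(u,A)\in H^1_{\mathrm{per}}$.

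For the energy estimate, the identity $\nabla_A u=e^{i\varphi}\nabla\sqrt\rho$ gives $|\nabla_A u|^2=|\nabla\sqrt\rho|^2$ on $S$ and $\nabla_A u=0$ on $\Int(\omega)$; combined with $|\calD_A u|\le\sqrt 2|\nabla_A u|$ from Lemma~\ref{lemmarhominoredi1}, each of the three kinetic terms in \eqref{eqdefE} integrates to $\lsim\kappa\int|D\chi|$. The cross term $(B_3-(\kappa/\sqrt 2)(1-\rho))^2$ vanishes on $\omega$ (where $\rho=0$ and $B_3=\kappa/\sqrt 2$) and on $Q_{L,T}\setminus(\omega)_{1/\kappa}$ (where $\rho=1$ and $B_3=0$); on the transition set it equals $(\kappa/\sqrt 2)^2(1-\rho)^2$, contributing again $\lsim\kappa\int|D\chi|$. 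The remaining $\int|B'|^2$ and the exterior magnetic energy are inherited verbatim from $F[\chi,B]$. Summing and applying Theorem~\ref{theochiupper1} yields the claim.

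The main obstacle is the global definition of $\varphi$: the set $S$ has rich first homology, with each cross-sectional component of $\omega$ contributing a generator of $H_1(S)$, and a single-valued $e^{i\varphi}$ with $\nabla\varphi=A$ exists only when every such holonomy of $A$ is an integer multiple of $2\pi$. This is precisely the role of the flux-quantization clause of Theorem~\ref{theochiupper1}, and the reason that Lemma~\ref{lemmasubdivideflux} was formulated to preserve quantized flux at every stage of subdivision rather than merely halving areas: the sharp construction had to be engineered so that each tube carries integer multiples of $2\pi$ throughout the branching hierarchy. Once this topological compatibility is in place, the remaining analysis is a local Modica--Mortola computation in the transition layer.
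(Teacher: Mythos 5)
Your construction follows the paper's proof: you reuse $(\chi,B)$ from Theorem~\ref{theochiupper1}, build $A$ via Lemma~\ref{lemmaconstructA}, set $\sqrt\rho$ to a Lipschitz function of $\dist(\cdot,\omega)$ vanishing on $\omega$ and equal to $1$ at distance $1/\kappa$, and take $u=\sqrt\rho\,e^{i\varphi}$ with $\nabla\varphi=A$ on the superconducting region (using the flux-quantization clause to get a phase that is well defined modulo $2\pi$). The detailed accounting in your ``For the energy estimate'' paragraph --- that the coupling term vanishes on $\omega$, the kinetic terms collapse to $|\nabla\sqrt\rho|^2\le\kappa^2$ supported on the transition layer, and the remaining terms are inherited from $F$ --- matches the paper's argument, which introduces the correction $E_S=\int\bigl[|\nabla\rho^{1/2}|^2+\kappa^2(\chi-(1-\rho))^2\bigr]$ for exactly this purpose.

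One slip worth fixing: the first displayed inequality of your density paragraph,
\[
\int_{Q_{L,T}}\bigl(|\nabla\sqrt\rho|^2+\kappa^2(1-\rho)^2\bigr)\,dx \;\lsim\; \kappa\int_{Q_{L,T}}|D\chi|\,,
\]
is false as stated, because $(1-\rho)^2\equiv 1$ on all of $\omega$, not only on the transition layer. By Lemma~\ref{lemmachiloc} one has $|\omega|\sim \bext L^2T/\kappa$, so $\kappa^2\int_\omega(1-\rho)^2\sim\kappa\bext L^2T$, which exceeds $F[\chi,B]\lsim\bext\kappa^{3/7}T^{3/7}L^2$ by a factor $(\kappa T)^{4/7}\ge1$; this bulk piece is exactly what Lemma~\ref{lemmaseparatebulk} was designed to subtract. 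Fortunately the inequality is unused: what your later paragraph (correctly) relies on is that the coupling term $(B_3-(\kappa/\sqrt2)(1-\rho))^2$ vanishes on $\omega$ since there $B_3=\kappa/\sqrt2$ and $\rho=0$, so the excess over $F[\chi,B]$ lives only on $(\omega)_{1/\kappa}\setminus\omega$, where \eqref{eqestchiuppernbd} gives the required bound. I suggest either deleting that display or restricting the integral to $(\omega)_{1/\kappa}\setminus\omega$, and noting that the identity $B_3=\kappa/\sqrt2$ on $\omega$ is a property of the specific construction (Lemma~\ref{lemmatransformrectintorect}) rather than of an arbitrary admissible pair; the paper's proof avoids invoking it by instead bounding the coupling term on $\omega$ by $\chi(B_3-\kappa/\sqrt2)^2$, which is already accounted for in $F$.
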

\begin{proof}
 Let $\chi$ and $B$ be the functions constructed in Theorem \ref{theochiupper1}.
Starting from $B$, we obtain $A$ from
Lemma \ref{lemmaconstructA}. Consider now the superconducting
domain $\omega:=\{x\in Q_{L,T}: \chi(x)=0\}$. Here $A$ is a curl-free vector field, hence it is locally
the gradient of some potential $\theta$. The domain is multiply
connected, but the flux of $B$ across each tube is an integer multiple
of $2\pi$, hence we can globally write $A$ as the gradient of a
multi-valued function $\theta$, such that $\theta$ mod $2\pi$ is
single-valued. 

We set $\rho=0$ in the normal phase, and let it grow to 1 in the
superconducting phase, on a length scale $1/\kappa$,
\begin{equation*}
  \rho(x) := \min\{1, \kappa^2\,\dist^2(x,\omega)\}. 
\end{equation*}
The distance function is understood, as usual, $Q_L$-periodic in the first two components. Finally, we set
\begin{equation*}
  u(x):=\rho^{1/2}(x) e^{i\theta(x)}\,.
\end{equation*}

Since $\nabla \theta=A$ whenever $\rho\ne 0$, we have 
\begin{equation*}
  |\nabla_Au|^2 = |\nabla\rho^{1/2}|^2. 
\end{equation*}
The $B'$  part of the energy is identical, and so is  the outer field. It
remains to treat the coupling term. 
In $\omega$ we have $\rho=0$ and  $\chi=1$, hence
\begin{equation*}
  \left(B_3-\frac{\kappa}{\sqrt2} (1-\rho)\right)^2 =
  \chi\left(B_3-\frac{\kappa}{\sqrt2} \right)^2 \,.
\end{equation*}
Outside $\omega$ we have $B=0$, hence
\begin{equation*}
  \left(B_3-\frac{\kappa}{\sqrt2} (1-\rho)\right)^2 =
  \frac{\kappa^2}{2}(1-\rho)^2
\end{equation*}
The first term is exactly the one appearing in $F$. 
Therefore the $E[u,A]\le F[\chi,B]+E_S$, where 
\begin{equation*}
 E_S:= \int_{Q_{L,T}} \left[|\nabla\rho^{1/2}|^2 + \kappa^2 (\chi-(1-\rho))^2\right]dx\,.
\end{equation*}
Let  $(\omega)_{1/\kappa}$ be a $1/\kappa$-neighbourhood of $\omega$.
Then in $\omega$ we have $\chi=1=1-\rho$, outside $(\omega)_{1/\kappa}$ we
have $\chi=0=1-\rho$, and recalling $|\nabla \rho^{1/2}|\le \kappa$ we obtain
\begin{equation*}
 E_S \le2\kappa^2 |(\omega)_{1/\kappa}\setminus\omega|\,.
\end{equation*}
Recalling (\ref{eqestchiuppernbd}) we conclude $E_S\lsim \kappa \int_{Q_{L,T}} |D\chi|\le F[\chi,B]$.
This concludes the proof.
\end{proof}

\bibliographystyle{alpha-noname}
\bibliography{COS}

\end{document}